\documentclass[11pt,reqno]{amsart}
\usepackage{hyperref}

\allowdisplaybreaks[4]

\usepackage{amsmath}
\usepackage{amssymb}
\usepackage{mathrsfs}
\usepackage{amsthm}
\usepackage{bm}
\usepackage{booktabs}
\usepackage{float}
\usepackage{graphicx,epstopdf}
\usepackage{subfig}
\usepackage{enumerate}
\usepackage{enumitem}
\usepackage{extarrows}
\usepackage{geometry}
\usepackage{cases}
\usepackage{tikz}
\usepackage{subcaption}
\usetikzlibrary{calc}
\usepackage{subfiles}
\usepackage{graphicx}
\usepackage{algorithm}
\usepackage{algpseudocode}
\usepackage{empheq}
\usepackage[numbers,sort&compress]{natbib}
\usepackage{color}
\theoremstyle{plain}
\newtheorem{lemma}{Lemma}[section]
\newtheorem{theorem}[lemma]{Theorem}
\newtheorem{dfn}[lemma]{Definition}
\newtheorem{corollary}[lemma]{Corollary}
\newtheorem{remark}[lemma]{Remark}
\newtheorem{prop}[lemma]{Proposition}
\newtheorem{assumption}{Assumption}
\newtheorem{claim}{Claim}
\newtheorem{example}{Example}
\numberwithin{equation}{section}

\begin{document}
	\bibliographystyle{plainnat}
	\title[Strong Feller property for SPDEs on graphs]{Quantifying the effect of graph structure on strong Feller property of SPDEs}
	\author{Jianbo Cui}
	\address{Department of Applied Mathematics, The Hong Kong Polytechnic
		University, Hung Hom, Kowloon, Hong Kong, SAR, China.}
	\email{jianbo.cui@polyu.edu.hk}
    \author{Tonghe Dang}
    \address{Department of Applied Mathematics, The Hong Kong Polytechnic
		University, Hung Hom, Kowloon, Hong Kong, SAR, China.}
	\email{tonghe.dang@polyu.edu.hk}
	\author{Jialin Hong}
	\address{State Key Laboratory of Mathematical Sciences, Academy of Mathematics and Systems Science, Chinese Academy of
		Sciences, Beijing 100190, China, and School of Mathematical Sciences, University of Chinese Academy of Sciences, Beijing 100049, China.
	}
	\email{hjl@lsec.cc.ac.cn}
	\author{Zhengkai Wang}
	\address{State Key Laboratory of Mathematical Sciences, Academy of Mathematics and Systems Science, Chinese Academy of
			Sciences, Beijing 100190, China, and School of Mathematical Sciences, University of Chinese Academy of Sciences, Beijing 100049, China.}
		\email{wangzhengkai@lsec.cc.ac.cn(Corresponding author)}
		
		\thanks{This research is partially supported by MOST National Key R\&D Program No. 2024YFA1015900, the Hong Kong Research Grant Council GRF grant  15302823, NSFC grant (No. 12301526, No. 12471386, No. 12461160278), NSFC/RGC Joint Research Scheme N PolyU5141/24, and the CAS AMSS-PolyU Joint Laboratory of Applied Mathematics. 
        }
		\begin{abstract}
	This paper investigates how the structure of the underlying graph influences the behavior of stochastic partial differential equations (SPDEs) on finite tree graphs, where each edge is driven by space-time white noise. We first introduce a novel graph-based null decomposition approach to analyzing the strong Feller property of the Markov semigroup generated by SPDEs on tree graphs. By examining the positions of zero entries in eigenfunctions of the graph Laplacian operator, we establish a sharp upper bound on the number of noise-free edges that ensures both the strong Feller property and irreducibility. Interestingly, we find that the addition of noise to any single edge is sufficient for chain graphs, whereas for star graphs, at most one edge can remain noise-free without compromising the system’s properties. Furthermore, under a dissipative condition, we prove the existence and exponential ergodicity of a unique invariant measure.
		\end{abstract}
	\subjclass[2020]{60H15, 35R02, 47D07, 37L40}
		\keywords {stochastic partial differential equation, metric graph, strong Feller property, Markov semigroup, invariant measure}
		
	\maketitle
    \section{Introduction}\label{sec:intro} 
    Partial differential equations (PDEs) on graphs have become an increasingly active area of research, motivated  by wide applications in diverse fields such as image processing \cite{elmoataz2016morphological} and traffic dynamics \cite{li2017convex}. Substantial progress has been made for deterministic models, including gradient flows \cite{chow2012fokker,chow2018entropy}, Hamiltonian flows \cite{cui2022time,cui2023wasserstein}, Schr\"odinger equations \cite{chow2019discrete}, Hamilton--Jacobi equations \cite{cui2025finite,cui_dang2025HJB} as well as wave and heat equations \cite{fijavz2007variational,arendt2014diffusion}. In realistic applications, systems are often subject to random influences from external perturbations or intrinsic noises, which introduce uncertainty and make stochastic modeling indispensable. This has led to growing interest in incorporating stochastic effects into PDEs posed on graph structures. In recent years, SPDEs on graphs have been actively studied, with representative advances for parabolic SPDEs \cite{cerrai2017spdes,kovacs2021stochasticdiffusion,cui2025large} and stochastic Schr\"odinger equation \cite{cui2023optimal}. In particular, the regularity of Markov semigroup corresponding to the SPDEs and long‑time dynamical properties of solutions have drawn increasing attention in recent decades \cite{albeverio2013invariant,kovacs2023parabolic,fkirine2025strong}.
    
    In this paper, we are concerned with the SPDE on the tree $\Gamma$:
   \makeatletter
   \begingroup
   \def\tagform@#1{\maketag@@@{\small(#1)}}
   \begin{subequations}\label{eq:parabolic}
   	\begin{empheq}[left=\empheqlbrace]{flalign}
   		&\tfrac{\partial{u}_j}{\partial t}(t,x_j) = \tfrac{\partial^2{u}_j}{\partial x^2}(t,x_j)
   		+ b_j\bigl(u_j(t,x_j)\bigr)
   		+ Q_j \tfrac{\partial W_j}{\partial t}(t,x_j),
   		&& t>0,\, x_j\in e_j,\label{eq:edge-eq} \\
   		&u_j(t,v_i) = u_l(t,v_i),
   		&& t\ge0,\, \forall\, e_j,e_l \in E_{v_i}, \label{eq:continuity} \\
   		&\textstyle\sum_{j=1}^{m} \phi_{ij}\, \tfrac{\partial{u}_j}{\partial x}(t,v_i) = 0,
   		&& t\ge0,\, 1\le i\le n, \label{eq:kirchhoff} \\
   		&u_j(0,x_j) = u_j^0(x_j),
   		&& x_j \in e_j,\, 1\le j\le m. \label{eq:initial}
   	\end{empheq}
   \end{subequations}
   \endgroup
   \makeatother
    Here, $\Gamma=(V(\Gamma),E(\Gamma))$ is a finite connected tree, consisting of a vertex set $V(\Gamma)=\{v_1,\dots,v_n\}$ and an edge set $E(\Gamma)=\{e_1,\dots,e_m\}$, where $m\in\mathbb{N}^+$ and $n=m+1$. For any vertex $v\in V(\Gamma)$, let $E_v$ denote the set of edges incident to $v$. For $j=1,\ldots,m$, we normalize and
    parameterize every edge on the interval [0,1], i.e., $e_j=[0,1]$, the drift coefficient $b_j:\mathbb{R}\rightarrow\mathbb{R}$ is a globally Lipschitz continuous function, $\tfrac{\partial W_j}{\partial t}$ represents independent space-time white noise defined on a complete filtered probability space $\left(\Omega,\mathcal{F},(\mathcal{F}_t)_{t\ge0},\mathbb{P}\right)$, the coefficient $Q_j$ denotes an operator-valued function (see \eqref{Q}), and $u^0_j$ is the initial datum. The classical homogeneous Neumann--Kirchhoff condition \eqref{eq:kirchhoff} is imposed to characterize the flux conservation at the vertices. Here $(\phi_{ij})_{n\times m}$ is the incidence matrix of an oriented version of $\Gamma$; see Subsection~\ref{notation} for details. Equation \eqref{eq:edge-eq}, known as the stochastic reaction‑diffusion equation on networks, arises in diverse applied contexts, including synchronization phenomena in dynamical systems \cite{bessaih2025synchronization}, neurophysiological modeling \cite{bonaccorsi2008stochastic}, and models for the motion of molecular motors \cite{cerrai2017spdes} on graph structures.
     
 A fundamental property in the long-time dynamical analysis of stochastic systems is the strong Feller property of the Markov semigroup. This property provides a regularizing effect by mapping bounded measurable functions into the space of bounded continuous functions, thereby linking measure theoretic properties of a process to the corresponding topological properties \cite{hairer2018strong,hairer2006ergodicity}. It not only serves as a cornerstone for establishing the existence and regularity of transition densities, but also plays a crucial role in analyzing the uniqueness of invariant measures and the ergodicity of the underlying Markov semigroup. Together with irreducibility (see Definition~\ref{def}), the strong Feller property guarantees that any existing equilibrium state (or invariant measure) is unique. In applied contexts, these properties are of great importance in the study of stochastic models arising from real-world applications. Typical examples include stochastic climate models and turbulence models \cite{majda2001mathematical,hairer2006ergodicity}, where these properties are closely related to predictability, statistical equilibrium, and uncertainty quantification \cite{batou2013calculation}. For SPDEs on Euclidean domains, the strong Feller property, irreducibility and invariant measure have been extensively studied, and we refer to the monograph \cite{da1996ergodicity} and references therein for details.
Despite of fruitful results in the Euclidean case, not much is known for the regularizing properties of Markov semigroup and long-time dynamics for SPDEs on graphs. For example, in the case of the heat equation with Brownian motion perturbations at all vertices, \cite{kovacs2023parabolic} investigated the nonlinear case and proved that the associated solution is Markov and Feller. \cite{fkirine2025strong} further studied the linear equation on a tree and proved the strong Feller property when noise is presented in all of the boundary vertices except one. It also provided the proof of the existence and uniqueness of an invariant measure for the linear equation. For SPDE \eqref{eq:parabolic} on graph with trace class noise, \cite{albeverio2013invariant} studied the existence and uniqueness of an invariant measure under dissipative polynomially bounded nonlinearity by Yosida approximation and splitting methods. 

Compared with the Euclidean case, the study of the strong Feller property of Markov semigroup for SPDEs on graphs faces several unique challenges due to the interplay between continuous intervals and discrete vertices under stochastic forcing. First, the graph structure combines Euclidean topology (each edge as a continuous interval) with discrete incidence geometry (vertex adjacency), introducing multiscale effects absent in Euclidean domains. For example, the spectral properties (e.g., spectral gap) of the graph Laplacian depend intrinsically on the graph’s incidence structure, making the analytical framework more difficult than in Euclidean settings. Second, the dynamics of equations are defined on edges but coupled through vertex conditions, which introduces structural complexity that complicates the analysis of regularity and properties of solution. Moreover, the stochastic forcing may act on part of the edges or vertices, leading to an incomplete coverage of noise on graph. The interaction of this partial noise with the graph geometry and vertex coupling essentially affects the propagation of randomness, the regularizing properties of associated Markov semigroup, and the long-time dynamics. Consequently, quantifying how the graph structure affects the regularizing properties of the associated Markov semigroup remains unclear.


In this paper, we study the SPDE \eqref{eq:parabolic} driven by edge‑based space‑time white noise which is allowed to act on partial set of edges, with a focus on the strong Feller property, irreducibility, and invariant measure. To overcome the structural complexity arising from the interaction between edges and vertex conditions, we incorporate the graph's adjacency matrix into the analysis of the Laplacian operator, and reformulate the classical condition on the inclusive relation of operators (see \cite[Theorem 7.2.1]{da1996ergodicity}) for the strong Feller property of the linear SPDE as a spectrum problem for the corresponding graph Laplacian operator. To systematically handle the structural complexity of general trees, we propose an approach based on a null decomposition of the tree graph, which reduces the original global problem to its minimal indivisible components, referred to as $S$-atoms. A key insight from our analysis is that the positions of zero entries in the graph Laplacian eigenfunctions reveal how noise-free edges affect the regularizing influence of noise, enabling us to quantify the impact of partial noise propagation on the tree. For each $S$-atom, we analyze its support and core sets, which yields the maximal number of noise‑free edges that can be permitted while maintaining the strong Feller property. By aggregating the results over all $S$-atoms in the decomposition, we ultimately obtain a sharp upper bound of the number of noise-free edges to quantify the influence of graph structure on maintaining the strong Feller property for linear equation. For the nonlinear equation, we employ the Girsanov theorem to perform a measure transformation, which establishes the equivalence of distributions and thereby yields the same result for the strong Feller property as in the linear case; we refer to Theorem \ref{main theorem} for details.
 
 For the irreducibility of SPDE \eqref{eq:parabolic} on a tree, we show that the necessary and sufficient condition in the linear case coincides with that for the strong Feller property. Furthermore, by presenting long-time regularity estimates in a Sobolev space with a positive index and the  asymptotic attractiveness of the solution, we obtain the existence and exponential ergodicity of a unique invariant measure for \eqref{eq:parabolic} with a dissipative nonlinearity; see Theorem \ref{invariantmeasureth}. Finally, we apply our main theoretical results (Theorems \ref{main theorem} and \ref{invariantmeasureth}) to three fundamental tree structures: chain graphs, star graphs, and an $S$-atom. We surprisingly find that for the chain graph, the Markov semigroups of SPDE remain strong Feller and irreducible as long as at least one edge is noisy. For the star graph, these properties are maintained precisely when all edges are noisy with at most one exception, which appears consistent with \cite[Example 4.8]{fkirine2025strong} where the noise is imposed on the vertices (see Remark \ref{vertexnoise}). For the considered $S$-atom, the sharp upper bound on noise‑free edges that still guarantees strong Feller and irreducible semigroups can also be obtained, and the positions of noise-free edges are determined by eigenfunctions of the corresponding graph Laplacian. However, for a general graph, it is still difficult to identify which edges can be left noise-free. Moreover, in the three cases discussed above, the solution of the SPDE with a dissipative nonlinearity admits a unique invariant measure.
    
This paper is organized as follows. In Section~\ref{motivation}, we present some motivating observations of strong Feller property for linear SPDEs on chain graph and star graph. Section~\ref{Preliminaries} introduces some preliminaries including notations, definitions as well as the well-posedness of the SPDE on graph. We also present some definitions about tree and introduce a null decomposition of trees. In Section~\ref{main}, we present the main results regarding the strong Feller property, irreducibility, and invariant measure of the Markov semigroup associated with SPDE on trees. Numerical validations are provided in Section~\ref{numerical}. Section~\ref{proof} is devoted to the proofs of the main results. Appendix \ref{appendix} provides supporting material, including some proofs and several additional numerical experiments.
    \section{Motivating observations}\label{motivation} 
    In this section, we present some interesting findings on
    the behaviors of 
    linear SPDEs on trees, i.e., \eqref{eq:parabolic} with $b_j=0$ for $j=1,2,\ldots,m$.
To gain insight into the strong Feller property of the stochastic system, we perform numerical simulations on two prototypical graphs: a chain graph and a star graph (see Fig.~\ref{SF_graphs}). We approximate the solution by a full discretization whose spatial direction is the spectral Galerkin method and temporal direction is the accelerated exponential Euler method \cite{djurdjevac2024higher}. Recall that the strong Feller property intuitively implies a smoothing effect: the Markov semigroup should map bounded measurable functions to bounded continuous functions. In the numerical experiments, we quantify the smoothing effect of the semigroup by the difference $|\mathcal{R}_T\phi(X_0+\mathcal{E})-\mathcal{R}_T\phi(X_0)\big|$, which is approximated using the Monte Carlo method 
\begin{flalign}\label{lineardiffernece}
|\mathcal{R}_T\phi(X_0+\mathcal{E})-\mathcal{R}_T\phi(X_0)\big|&\approx\frac{1}{M_{traj}}\sum_{m=1}^{M_{traj}}|\phi\bigl(v^{\tau,N}(T,X_0+\mathcal{E})\bigl)\big|,
\end{flalign} where $\mathcal R_T$ is the Markov semigroup defined in \eqref{linear semigroup}, $M_{traj}$ is the number of trajectory, $\epsilon \in \{ 10^{-\tfrac{4k}{7}} : k = 0,1,\dots,7\}$, $\mathcal{E}=\epsilon\sum_{l=1}^{N}\Psi^{1,l}$  with $\{\Psi^{1,l}\}_{l=1}^N$ being a family of eigenfunctions of graph Laplacian operator, sign function $\phi(h)=\text{sgn}(\langle h,\Psi^{1,1}\rangle)\in\mathcal{B}_b(H)$, $v^{\tau,N}(T,X_0+\mathcal{E})$ denotes the numerical solution to the linear SPDE at time $T$ with initial value $X_0+\mathcal{E}$, and $N,\tau$ are respectively the dimension of the spectral Galerkin projection and the temporal stepsize. Note that  by the definition of strong Feller property, if the right‑hand side of \eqref{lineardiffernece} converges to zero as $\mathcal{E}$ converges to zero, then $\mathcal{R}_T$ is strong Feller; otherwise, it is not strong Feller.
    	
  To specify the noisy and noise-free edges, we partition the edge set $E(\Gamma)$ into two disjoint subsets: $E(\Gamma) = \mathcal{Y}(\Gamma) \cup \mathcal{Z}(\Gamma)$, where $\mathcal{Y}(\Gamma)$ consists of noise-induced edges and $\mathcal{Z}(\Gamma)$ comprises noise-free edges. Accordingly, the operator $Q_j,\, j=1,2,\dots,m$ in \eqref{eq:edge-eq} can be described as
  \begin{flalign}\label{Q}
  	Q_j=\begin{cases}
  		I_d,\ &e_j\in\mathcal{Y}(\Gamma),\\
  		0,\ &e_j\in\mathcal{Z}(\Gamma),
  	\end{cases}
  \end{flalign}where $I_d$ is an identity operator on $L^2(0,1)$.

We consider different noise configurations on chain graph and star graph represented by different lines in Figs.~\ref{SF_graphs}(a) and \ref{SF_graphs}(b). For the chain graph displayed in Fig.~\ref{SF_graphs}(a), we observe that all lines except the one of zero noisy edge converge to zero as $\mathcal{E}$ converges to zero, which indicates that the Markov semigroup corresponding to the linear equation exhibits the strong Feller property provided noise acts on at least one edge. While the star graph case needs a high requirement on the amount of noise-induced edge. For example, for the star graph with three edges presented in Fig.~\ref{SF_graphs}(b), we find that only the lines of two noisy edges and the three noisy edges converge to zero, as $\mathcal{E}$ converges to zero. This suggests that the strong Feller property can be retained if noise is present on all edges with at most one exception (i.e., at most one edge is noise‑free). These observations illustrate that the graph structure governs the strong Feller property of Markov semigroup for SPDEs on these graphs.

	\begin{figure}[htbp]
		\centering
		
		\begin{minipage}[t]{0.42\textwidth}
			\centering
			\begin{tikzpicture}[scale=1.1,line width=0.8pt]
				\foreach \i in {1,2,3,4,5} {
					\filldraw (\i,0) circle (1.5pt);
					\ifnum\i<5
					\draw (\i,0) -- (\i+1,0);
					\fi
				}
				\foreach \i in {1,...,5}
				\node[below] at (\i,0) {$\i$};
			\end{tikzpicture}
		\end{minipage}
		\hspace{7mm}
		\begin{minipage}[t]{0.42\textwidth}
			\centering
			\begin{tikzpicture}[scale=1.1,line width=0.8pt]
				\filldraw (0,0) circle (1.5pt);
				\node[below=2pt]{$c$};
				\foreach \a/\l in {90/1,210/2,330/3} {
					\draw (0,0) -- (\a:1);
					\filldraw (\a:1) circle (1.5pt);
					\node at (\a:1.25) {$\l$};
				}
			\end{tikzpicture}
		\end{minipage}
		
		\vspace{0.3cm}
		
		\begin{minipage}[t]{0.4\textwidth}
			\centering
			\includegraphics[width=\linewidth]{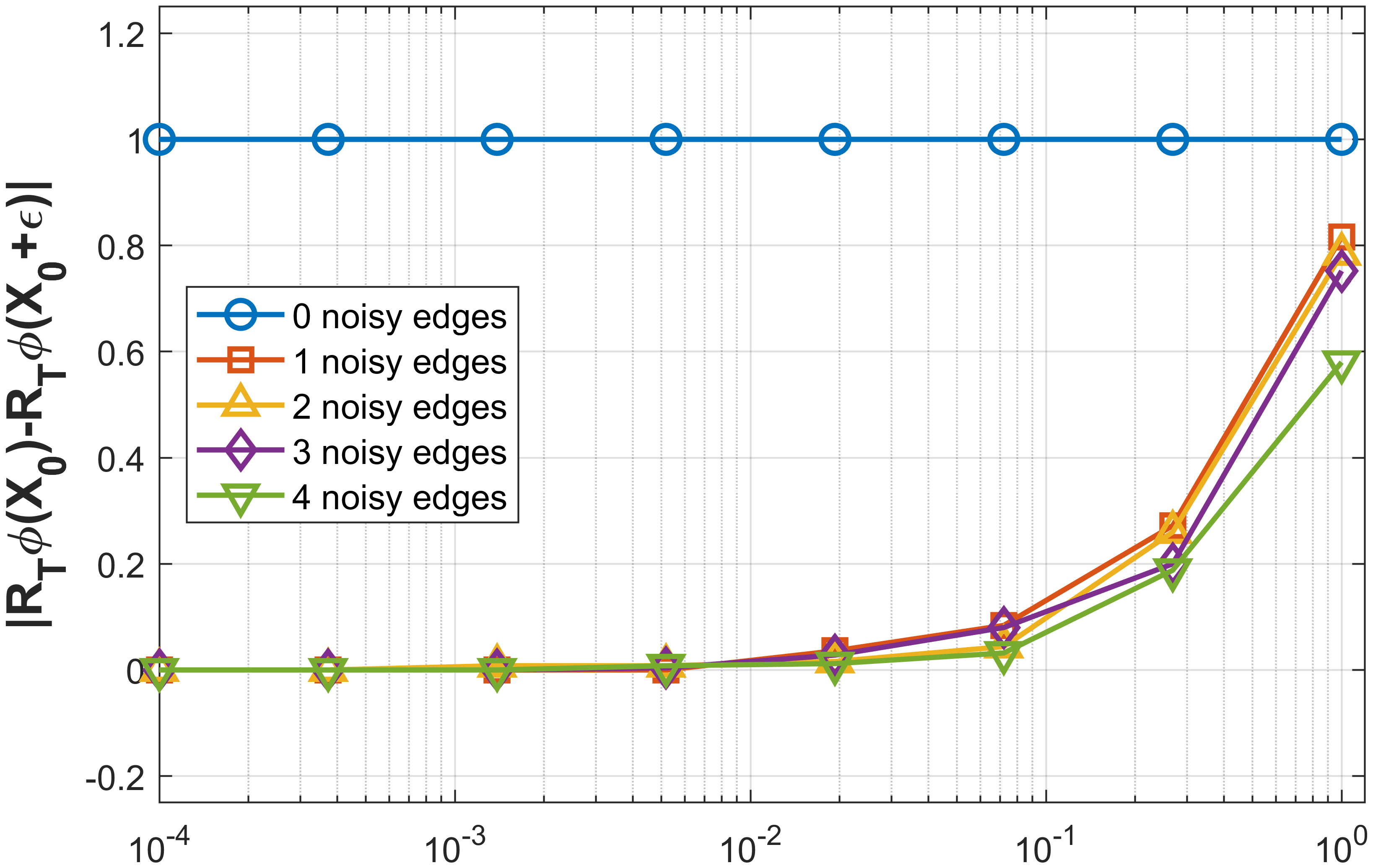}
			\captionsetup{
				singlelinecheck=false,
				justification=centering
			}
			\caption*{(a) Chain graph with $4$ edges}
		\end{minipage}
		\hspace{12mm}
		\begin{minipage}[t]{0.4\textwidth}
			\centering
			\includegraphics[width=\linewidth]{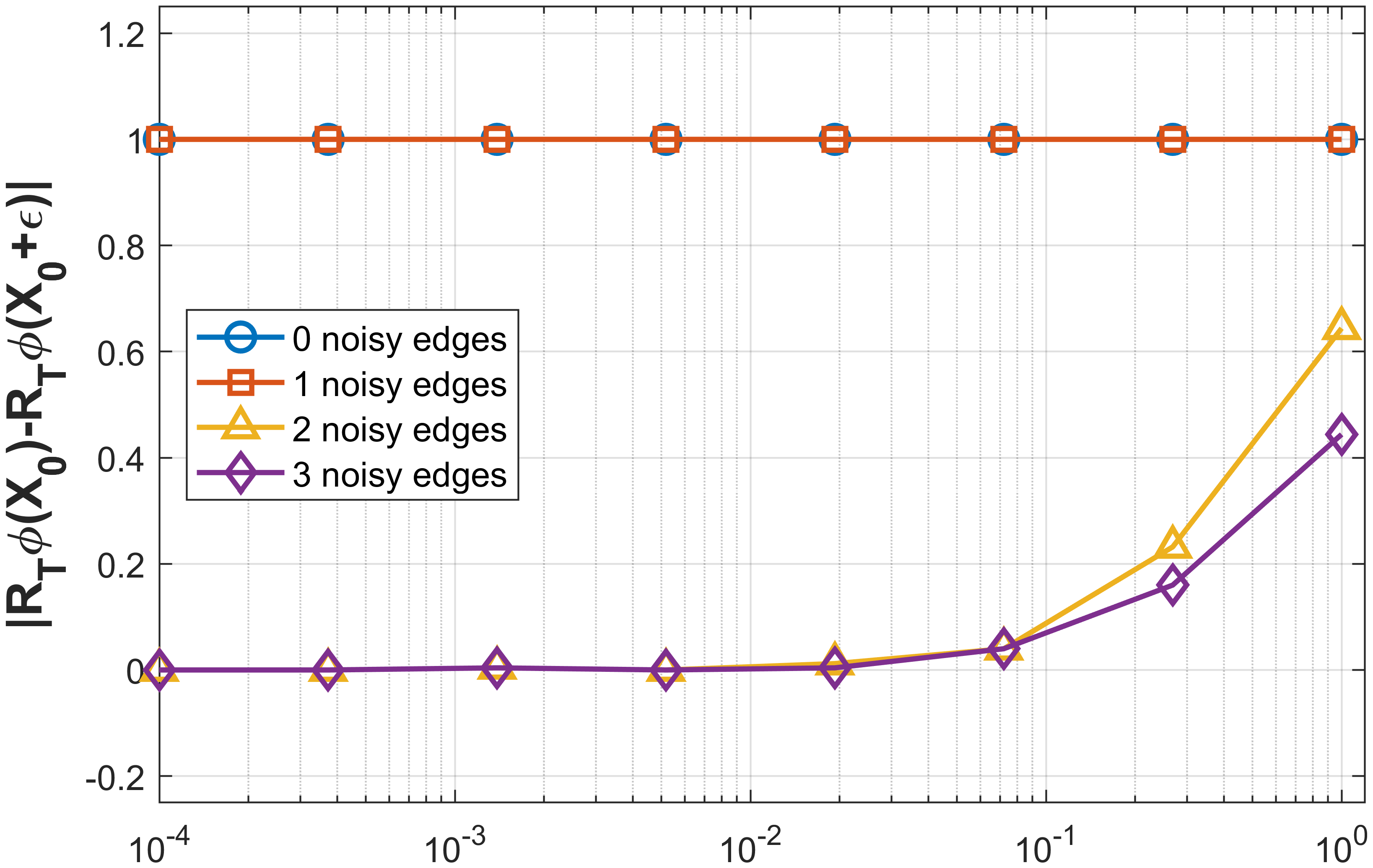}
			\captionsetup{
				singlelinecheck=false,
				justification=centering
			}
			\caption*{(b) Star graph with $3$ edges}
		\end{minipage}
		
		\caption{
			Strong Feller property of \eqref{eq:parabolic} with $b_j=0$ for all $j$ on different graphs.
			Common parameters: $X_0=0,\,N=2^6,\, \tau=2^{-5},\, T=2^{-1},\ M_{{traj}}=500,\,\epsilon \in \{ 10^{-\tfrac{4k}{7}} : k = 0,1,\dots,7\},\, \mathcal{E}=\epsilon\sum_{l=1}^{N}\Psi^{1,l}.$ For chain graph, 
$\Psi^{1,l}(\mathbf{x})=[\sqrt{2}\sin(\sqrt{\mu_l}(1-x_1))+\sin(\sqrt{\mu_l}x_1),
    \sin(\sqrt{\mu_l}(1-x_2),\sin(\sqrt{\mu_l}x_3),
    \sin(\sqrt{\mu_l}(1-x_4))-\sqrt{2}\sin(\sqrt{\mu_l}x_4)]^\top$ with $\mu_l=(\tfrac{\pi}{4}+2(l-1)\pi)^2$ and $\mathbf{x}=[x_1,x_2,x_3,x_4]^\top$. For star graph, $\Psi^{1,l}(\mathbf{x})=[\cos(\sqrt{\mu_l}x_1),-\cos(\sqrt{\mu_l}x_2),0]^\top$ with $\mu_l=(\tfrac{1}{2}+(l-1))^2\pi^2$ and $\mathbf{x}=[x_1,x_2,x_3]^\top$.  }
		\label{SF_graphs}
	\end{figure}


    Furthermore, we make a comparison with stochastic differential equation on Euclidean domain. Consider the following example on $\mathbb{R}^2$ (see \cite{hairer2006ergodicity})
    \begin{flalign*}
    	\begin{cases}
    		dx(t)=-x(t)dt+dw(t),\ &x(0)=x_0,\\
    		dy(t)=-y(t)dt,\ &y(0)=y_0,
    	\end{cases}
    \end{flalign*}
where $t>0$ and $w(t)$ is a standard one-dimensional Brownian motion. Denote the corresponding Markov semigroup by $\mathcal{G}_t$. Note that for the test function $\psi(x,y) = \operatorname{sgn}(y) \in \mathcal{B}_b(\mathbb{R}^2)$, we have $\mathcal{G}_t\psi = \psi$ for all $t \ge 0$ and $\psi$ is discontinuous. Consequently, the semigroup $\mathcal{G}_t$ fails to be strong Feller for any $t \ge 0$.
This suggests that for a decoupled system, a lack of noise of some component can destroy the strong Feller property of the system, because noise can not be propagated into that component (here the $y$ direction). However, on a graph, the Neumann--Kirchhoff conditions at the vertices act as ``mixing" junctions. The noise-induced smoothing effect from some edges can ``flow" into adjacent noise-free edges.

These observations motivate us to investigate how the graph structure influences the strong Feller property of SPDEs on graphs. Moreover, we find that the graph structure also impacts other properties of the associated Markov semigroup, such as irreducibility and the existence and uniqueness of invariant measures (see Theorems \ref{main theorem} and \ref{invariantmeasureth}).
%
 
    \section{Preliminaries}\label{Preliminaries}
	We begin this section by setting up the necessary notation for a tree $\Gamma$, function spaces and operators on $\Gamma$, and then presenting the well-posedness of parabolic SPDE \eqref{eq:parabolic} on $\Gamma$ and introducing definitions of strong Feller property, irreducibility, invariant measure and exponential ergodicity of Markov semigroup. Furthermore, we introduce a null decomposition of trees, which will be used to analyze the effect of graph structure on properties of  Markov semigroups in the subsequent sections. 
	\subsection{Notation and assumptions}\label{notation}
	 We begin by introducing basic concepts and notations from graph theory. Two distinct vertices are called adjacent if they are connected by an edge; two edges are adjacent if they share a common vertex. A vertex and an edge are said to be incident if the edge is attached to that vertex.  These concepts are algebraically represented by two fundamental matrices. For graph $\Gamma$ with vertex set $V(\Gamma)=\{v_1,v_2,\ldots,v_n\}$ and edge set $E(\Gamma)=\{e_1,e_2,\ldots,e_m\}$, the adjacency matrix  $A(\Gamma)=(a_{ij})$ is an $n\times n$ matrix with $a_{ij}=1$ if vertices $v_i$ and $v_j$ are adjacent, and $0$ otherwise. The incidence matrix $M(\Gamma)=(m_{ij})$ is an $n\times m$ matrix with $m_{ij}=1$ if vertex $v_i$ is incident to edge $e_j$, and $0$ otherwise. 
	 	
	 The degree of vertex $v$, written as $\text{deg}(v)$, is the number of edges incident to $v$. $N(v)$ is the set of adjacent vertices of $v$ in $V(\Gamma)$ and $N[v]:=N(v)\cup\{v\}$. We call $N(v)$ (resp. $N[v]$) the neighbor (resp. closed neighbor) of the vertex $v$. For a vertex subset $S\subset V(\Gamma)$, we denote $N(S):={\cup}_{v\in S}N(v)$ and $N[S]:={\cup}_{v\in S}N[v]$. The subgraph induced by $S$ is written as $\Gamma\langle S\rangle$, and the connected components of $\Gamma$ are its maximal connected subgraphs. 
	 
	 A trail is a sequence of distinct edges joining a sequence of vertices. A circuit is a non‑empty trail whose first and last vertices coincide. A cycle is a non‑empty trail in which only the first and last vertices are equal. The tree is an important component of graph, which is connected and contains no cycles. The null space of $\Gamma$, denoted by $\mathcal{N}(\Gamma)$  is the null space of its adjacency matrix $A(\Gamma)$.
	 
	 A set $\mathcal{O}\subset V(\Gamma)$ is an independent set if no two vertices in $\mathcal{O}$ are adjacent. The independence number $\alpha(\Gamma)$ is the cardinality of a maximum independent set in $\Gamma$. A matching $\tilde{M}$ in $\Gamma$ is a set of pairwise non-adjacent edges (i.e., no two edges in $\tilde{M}$ share a common vertex). The matching number $\nu(\Gamma)$ is the cardinality of a maximum matching. In addition, for any finite set $\mathcal{O}$, we denote by $|\mathcal{O}|$ the number of its elements.
	 
	Then we introduce the function spaces and operators on tree $\Gamma$. Define the Hilbert space on $\Gamma$
	\begin{flalign*}
		H:=L^2(\Gamma)=\prod_{j=1}^{m}L^2(0,1;dx_j)
	\end{flalign*} 
	endowed with the inner product 
	\begin{flalign*}
		\langle u,v\rangle:=\sum_{j=1}^{m}\int_0^1u_j(x_j)v_j(x_j)dx_j,\ u=\begin{bmatrix}
			u_1\\u_2\\\vdots\\u_m
		\end{bmatrix},\ v=\begin{bmatrix}
			v_1\\v_2\\\vdots\\v_m
		\end{bmatrix}\in H,
	\end{flalign*}
and the norm $\|u\|_H=\sqrt{\langle u,u\rangle}$. And define the space of continuous functions on graph:
	\begin{flalign*}
		\mathcal{C}(\Gamma):=\{u\in\left(\mathcal{C}([0,1];\mathbb R)\right)^m:u_j(v_i)=u_l(v_i),\ \forall\,j,l=1,\ldots,m\ \text{with }  e_j,e_l\in E_{v_i}, i=1,2,\ldots,n\}.
	\end{flalign*}
	Let $W^{k,p}(0,1),\, k,p\in\mathbb{N}^+$ be the usual Sobolev space on $(0,1)$. On $H$, define the $m\times m$ dimensional Laplacian operator 
	\begin{flalign}\label{laplacian}
		\Delta=\begin{bmatrix}
			\frac{d^2}{dx^2}      &  & 0      \\
			& \ddots &  \\
			0      &  & \frac{d^2}{dx^2}
		\end{bmatrix}_{m\times m}
	\end{flalign}
	with the homogeneous Neumann--Kirchhoff boundary condition \eqref{eq:kirchhoff}. The domain of the Laplacian operator is given by \begin{flalign}\label{domain}
		\mathcal{D}(\Delta)=\left\{h\in\left(W^{2,2}(0,1)\right)^m\cap \mathcal{C}(\Gamma):\Phi^+h'(0)-\Phi^-h'(1)=0\right\},
	\end{flalign} 
where the $n\times m$ dimensional matrices $\Phi^+:=(\phi_{ij}^+)$ and $\Phi^-:=(\phi_{ij}^-)$ are respectively defined by 
\begin{flalign*}
	\phi_{ij}^+:=\begin{cases}
		1,\ \text{if}\ e_j(0)=v_i,\\
		0,\ \text{otherwise},
	\end{cases}\ 
	\text{and}\ \phi_{ij}^-:=\begin{cases}
		1,\ \text{if}\ e_j(1)=v_i,\\
		0,\ \text{otherwise},
	\end{cases}
\end{flalign*}
for $i=1,2,\ldots,n$ and $j=1,2,\ldots,m$. Here we denote by $e_j(0)$ and $e_j(1)$ the $0$ endpoint and the $1$ endpoint of the edge $e_j$, respectively.
By convention, denote $\Phi:=\Phi^+-\Phi^-$. It is known that the operator $(\Delta,\mathcal{D}(\Delta))$ is self-adjoint and non-positive operator. Several useful properties on the spectrum, spectral expansion and semigroup for the operator $(\Delta,\mathcal{D}(\Delta))$ are given in the following lemma; see \cite[Theorem 2.1]{10.1007/BFb0072753}, \cite[Theorem 3.1.1]{berkolaiko2013introduction} and \cite[Chapter 3]{fijavz2007variational} for details.
	\begin{lemma}\label{operatorproperty}
		For the Laplacian operator \eqref{laplacian} with domain \eqref{domain}, the following properties hold.
		\begin{enumerate}[label=(\Roman*)]
			\item The
			spectrum $\sigma(\Delta)$ consists of isolated eigenvalues 
			and  $\sigma(\Delta)=\sigma_1\cup\sigma_2$,  where
			\begin{enumerate}[label=(\roman*)]
				\item $\sigma_1 = \{-k^2\pi^2 \;|\; k \in \mathbb{N}\}$, 
				with multiplicity $r_k=1$;
				
				\item\label{sigma2} $\sigma_2=\{\lambda_\mu\in(-\infty,0)\;|\;\mu=\cos(\sqrt{-\lambda_\mu})\in \sigma(\tilde{A}(\Gamma))\cap(-1,1)\}$, where $\tilde{A}(\Gamma)=D^{-1}(\Gamma)A(\Gamma)$ denotes the $n\times n$ row-normalized adjacency matrix of $\Gamma$ and $\sigma(\tilde{A}(\Gamma))$ denotes the eigenvalue set of $\tilde{A}(\Gamma)$. Here $D(\Gamma)$ is an $n\times n$ dimensional diagonal matrix defined by $D(\Gamma)=\mathrm{diag}\bigl(\mathrm{deg}(v_i)\bigr).$
			\end{enumerate}
		\item For the eigenfunction corresponding to the eigenvalue $\lambda\in\sigma(\Delta)$, we have the following expressions. Let $\mathbf{x}=[x_1,x_2,\ldots,x_m]^\top\in[0,1]^m$.
		
		(i) $\lambda=0\in\sigma_1:$ 
		\begin{flalign*}
			\phi^{1,0}(\mathbf{x})=\bigl[\phi^{1,0}_{1}(x_1),\phi^{1,0}_{2}(x_2),\ldots,\phi^{1,0}_{m}(x_m)\bigr]^\top,
		\end{flalign*}
	where  for $j=1,2,\ldots,m,\,\phi^{1,0}_{j}(x_j)=a_{j}x_j+b_{j},\,a_j,\,b_j$ are some real constants such that $\phi^{1,0}$ satisfies the continuity condition \eqref{eq:continuity}, the homogeneous Neumann--Kirchhoff condition  \eqref{eq:kirchhoff} and $\|\phi^{1,0}\|_H=1$.
	
		(ii) $\lambda=-k^2\pi^2\in\sigma_1,\, k\in\mathbb{N}^+:$
		\begin{flalign*}
			\phi^{1,k}(\mathbf{x})=\bigl[\phi^{1,k}_{1}(x_1),\phi^{1,k}_{2}(x_2),\ldots,\phi^{1,k}_{m}(x_m)\bigr]^\top,
		\end{flalign*}
		where for $j=1,2,\ldots,m,\,\phi^{1,k}_{j}(x_j)=c^k_{1,j}\cos(k\pi x_j)+c^k_{2,j}\sin(k\pi x_j),\,c^k_{1,j},\, c^k_{2,j}$ are some real constants such that $\phi^{1,k}$ satisfies the continuity condition \eqref{eq:continuity}, the homogeneous Neumann--Kirchhoff condition  \eqref{eq:kirchhoff} and $\|\phi^{1,k}\|_H=1$.
	
	(iii) Any nonzero $\mu\in\sigma(\tilde{A}(\Gamma))\cap(-1,1)$ generates a set of eigenvalues $\{\lambda_\mu^l\in(-\infty,0)\;|\;\lambda_\mu^l=-(\text{arccos}\mu+2l\pi)^2,l\in\mathbb{Z}\}\subset\sigma_2$. In particular if $0\in\sigma(\tilde{A}(\Gamma))\cap(-1,1)$, the corresponding set takes the form $\{\lambda_0^l\in(-\infty,0)\;|\;\lambda_0^l=-(\frac{1}{2}+l)^2\pi^2,l\in\mathbb{Z}\}\subset\sigma_2$. Then for $\lambda=\lambda_\mu^l\in\sigma_2,\, l\in\mathbb{Z}$,
	\begin{align*}
		\phi^{2,l}(\mathbf{x})=\bigl[\phi^{2,l}_{1}(x_1),\phi^{2,l}_{2}(x_2),\ldots,\phi^{2,l}_{m}(x_j)\bigr]^\top,
	\end{align*}
where for $j=1,2,\ldots,m$,
\begin{flalign*}
\phi^{2,l}_{j}(x_j)=\tfrac{1}{\sin\sqrt{-\lambda}}\left(\phi^{2,l}_{j}(0)\sin(\sqrt{-\lambda}(1-x_j))+\phi^{2,l}_{j}(1)\sin(\sqrt{-\lambda}x_j)\right).
\end{flalign*}
Here $\phi^{2,l}_{j}(0)$ and $\phi^{2,l}_{j}(1)$ denote the endpoint values of the function on each edge $e_j$ such that $\phi^{2,l}$ satisfies the continuity condition \eqref{eq:continuity}, the homogeneous Neumann--Kirchhoff condition  \eqref{eq:kirchhoff} and $\|\phi^{2,l}\|_H=1$.
	\item The semigroup $(P_t)_{t\ge0}$ generated by $\Delta$ forms a strongly continuous contraction semigroup on $H$. The space $H$ admits a complete orthonormal basis $\{\phi_\lambda: \lambda\in\sigma_1\cup\sigma_2\}$ which is obtained by orthonormalizing the set $\{\phi^{1,k}\cup\phi^{2,l}:k\in\mathbb{N},l\in\mathbb{Z}\}$. Using this basis, we have the expansion:
\begin{flalign}\label{heatsemigroup}
	P_t u = \sum_{\lambda\in\sigma_1\cup\sigma_2} \langle u,\phi_\lambda\rangle e^{\lambda t} \phi_\lambda,\, t\ge0.
\end{flalign}
	In particular, when $t=0$, we have $u = \sum_{\lambda\in\sigma_1\cup\sigma_2} \langle u,\phi_\lambda\rangle \phi_\lambda$.
\end{enumerate}
\end{lemma}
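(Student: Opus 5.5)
The plan is to reduce the spectral problem for $(\Delta,\mathcal{D}(\Delta))$ to a finite-dimensional problem for the vertex values, treating separately the "Dirichlet" spectrum $\{-k^2\pi^2\}$ and the remaining negative spectrum. On each edge an eigenfunction with eigenvalue $\lambda=-s^2$ solves $\phi_j''=-s^2\phi_j$. Hence $\phi_j$ is a combination of $\cos(sx_j)$ and $\sin(sx_j)$; for $s=0$ it is affine.

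\emph{The case $\sin s\neq 0$.} Then $\phi_j$ is uniquely determined by its endpoint values, which gives the formula for $\phi^{2,l}_j$ in (II)(iii). By continuity \eqref{eq:continuity}, the endpoint values define a vertex vector $f\in\mathbb{R}^n$. Differentiating, the outgoing derivative at the endpoint $x_j=0$ is
\[
\phi_j'(0)=\tfrac{s}{\sin s}\bigl(-\cos s\,\phi_j(0)+\phi_j(1)\bigr),
\]
and symmetrically at $x_j=1$. Inserting this into the Kirchhoff condition \eqref{eq:kirchhoff}, with the signs of $\Phi^\pm$ accounting for orientation, gives at each vertex $v_i$
\[
-\cos s\,\deg(v_i) f_i+\sum_{v_k\sim v_i} f_k=0 .
\]
That is, $A(\Gamma)f=\cos s\,D(\Gamma)f$, so $\mu=\cos s\in\sigma(\tilde A(\Gamma))$ with $\mu\in(-1,1)$. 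Conversely, every such eigenvector produces a nonzero eigenfunction. Nonzero $f$ gives nonzero $\phi$, since $\phi$ vanishing on every edge forces $f=0$. Solving $\cos s=\mu$ yields $s=\pm\arccos\mu+2l\pi$, which is the family $\lambda^l_\mu$; for $\mu=0$ it reduces to $s=(\tfrac12+l)\pi$. Together these give $\sigma_2$ and (II)(iii).

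\emph{The case $\sin s=0$, i.e.\ $s=k\pi$.} Here $\lambda=-k^2\pi^2$, which is $\sigma_1$. For $k=0$ the Kirchhoff conditions force constants on a connected graph: integrate $\phi\phi''$ by parts over the graph to get $\sum_j\int|\phi_j'|^2=0$. This shows that $0$ is simple. For $k\ge1$, the affine and trigonometric forms in (II)(i)–(ii) are immediate from the ODE; only the multiplicity claim $r_k=1$ needs work.

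\emph{Multiplicity on a tree.} I would use the standard count for $\lambda=-k^2\pi^2$: the eigenspace is parametrized by vertex values satisfying $f_{e(1)}=(-1)^k f_{e(0)}$, plus "Dirichlet-type" sine components on edges. Kirchhoff then imposes $n$ linear constraints on the $m$ sine coefficients. On a tree the graph is bipartite and $m-n=-1$. The vertex-value part therefore contributes exactly a one-dimensional space (alternating signs along edges when $k$ is odd, constants when $k$ is even), and the sine part contributes
\[
m-\operatorname{rank}=m-(n-1)=0,
\]
because the signed incidence matrix of a connected graph has rank $n-1$. This yields $r_k=1$. This multiplicity count is the step I expect to be the main obstacle. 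It is exactly where the cited results (\cite{10.1007/BFb0072753}, \cite{berkolaiko2013introduction}) are used, and I would follow von Below's argument there rather than rederive it in full.

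\emph{Part (III).} Self-adjointness and non-positivity follow from the quadratic form $\langle\Delta h,h\rangle=-\sum_j\int|h_j'|^2$ on the closed form domain. Compactness of the resolvent follows from the compact embedding $(W^{1,2})^m\hookrightarrow H$. The spectral theorem then gives a discrete spectrum with an orthonormal eigenbasis, and the expansion \eqref{heatsemigroup} is the functional calculus $P_t=e^{t\Delta}$, a contraction semigroup since $\lambda\le0$.
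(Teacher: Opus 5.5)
Your proposal is correct and is essentially the argument behind the paper's citations. The paper gives no proof of its own; it cites von Below's reduction of the Kirchhoff conditions to $\tilde A(\Gamma)f=\cos s\,f$ on vertex values, together with the standard form and compact-resolvent argument for (III), and your route matches both that and the computations the paper itself later carries out in Step~2 and in the proof of Claim~1. One small correction: for odd $k$ the Kirchhoff constraint on the sine coefficients is $(\Phi^++\Phi^-)C_2^k=M(\Gamma)C_2^k=0$, which uses the unsigned incidence matrix, so the rank $n-1$ you need comes from the tree being bipartite (equivalently, $M(\Gamma)$ having full column rank on a tree), not from the rank of the signed incidence matrix.
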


	Arranging all eigenvalues of the operator $A := -\Delta$ in ascending order  yields a sequence $\{\mu_k\}_{k=0}^\infty$ satisfying \cite{ariturk2016eigenvalue}
	\begin{flalign}\label{eigenvalue}
		0=\mu_0<\mu_1\leq\mu_2\leq\cdots\leq\mu_k\le\cdots\ \text{with}\ \mu_k\sim k^2. 
	\end{flalign}
And the eigenpair set of $(\Delta,\mathcal{D}(\Delta))$ is then expressed as $\{(-\mu_k,\phi_k)\}_{k=0}^\infty$. In addition, for any $\alpha>0$, the space $\mathbb{H}^\alpha$ consists of all expansions
$u=\sum_{k=1}^{\infty} u_k \phi_k,\, u_k\in\mathbb{R}$, with
$
\|u\|_{\mathbb{H}^\alpha}^2
:=\sum_{k=1}^{\infty} \mu_k^{\alpha} u_k^2<\infty,
$
equipped with the inner product $(\cdot,\cdot)_\alpha := \langle A^{\frac{\alpha}{2}}\cdot, A^{\frac{\alpha}{2}}\cdot\rangle$.  It is a Hilbert space and coincides with $\mathcal D(A^{\frac{\alpha}{2}})$ on the orthogonal complement of  $(\ker(A))^\perp=\text{span}\{\phi^{1,0}\}$, where $\text{ker}(A)$ denotes the kernel space of the operator $A$.
 
 
If we set $X(t,X_0)(\mathbf{x})=\bigl[u_1(t,x_1),\ldots,u_m(t,x_m)\bigl]^\top,\mathbf{x}=[x_1,x_2,\dots,x_m]^\top\in[0,1]^m$, then we can rewrite \eqref{eq:parabolic} into an infinite-dimensional stochastic evolution equation:
	\begin{equation}\label{SEE}
		\begin{cases}
			 dX(t,X_0)=-A X(t,X_0)dt+B(X(t,X_0))dt+QdW(t),\,t>0,\\
			X(0,X_0)=X_0\in H,
		\end{cases}
	\end{equation}
where $X_0=\bigl[u^0_1,u^0_2,\dots,u^0_m \bigr]^\top$ with $u_j^0\in L^2(0,1),\, j=1,2,\dots,m$, $Q=\text{diag}\left(Q_j\right)_{m\times m}$ with $Q_j$ defined in \eqref{Q}, $W(t)=\bigl[W_1(t),W_2(t),\ldots,W_m(t)\bigl]^\top$ is an $H$-valued cylindrical Wiener process with $W_j(t)$ being independent cylindrical Wiener process on $L^2(0,1)$ and $B:H\rightarrow H$ is a Nemytskii operator satisfying the following assumption.
	
    \begin{assumption}\label{b1}
		The nonlinear term $B$ is defined by: for $h\in H$
		\begin{flalign*}
			B(h)(\mathbf{x}):=\bigl[b_1(h_1(x_1)),b_2(h_2(x_2)),\ldots,b_m(h_m(x_m))\bigl]^\top,
		\end{flalign*}
		where for some constant $K>0$, $|b_j'(\zeta)| \leq K$ for all $\zeta\in \mathbb{R}$, $j = 1,\dots,m$.
	\end{assumption}
Based on the contraction mapping approach (see e.g. \cite{lord2014introduction}), one can show the well-posedness of the mild solution to the semi‑linear SPDE \eqref{SEE} on graph, stated below.
\begin{prop}
	Let Assumption \ref{b1} hold and $X_0\in L^p(\Omega,H)$ for some $p\ge1$. Then for each $T>0$, \eqref{SEE} admits a unique mild solution $X\in L^p\left(\Omega,\mathcal{C}([0,T],H)\right)$ given by
	\begin{flalign}\label{mildsolution}
		X(t,X_0)=P_tX_0+\int_0^tP_{t-s}B(X(s,X_0))ds+\int_0^tP_{t-s}QdW(s),\ t\in[0,T].
	\end{flalign}
\end{prop}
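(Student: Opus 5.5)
We prove the proposition by a fixed-point argument for the mild formulation \eqref{mildsolution} in the Banach space $\mathcal{X}_T:=L^p(\Omega,\mathcal{C}([0,T],H))$, restricted to $\mathcal{F}_t$-adapted processes. For $p<2$ we first treat $p\ge2$ and then recover the general case by localization. The argument has three steps.

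\textbf{Step 1: the stochastic convolution.} We first show that
\[
W_A(t):=\int_0^tP_{t-s}Q\,dW(s)
\]
is a well-defined element of $\mathcal{X}_T$. By Lemma~\ref{operatorproperty}(III) and \eqref{eigenvalue}, $P_t$ is diagonal in the basis $\{\phi_k\}$ with $P_t\phi_k=e^{-\mu_kt}\phi_k$. Since $Q$ is an orthogonal projection, we have $\|P_sQ\|_{HS}^2\le \|P_s\|_{HS}^2=\sum_k e^{-2\mu_ks}$. Using $\mu_k\sim k^2$, this gives
\[
\int_0^T\sum_k e^{-2\mu_k s}\,ds\le T+\sum_{k\ge1}\frac{1}{2\mu_k}<\infty,
\]
so $W_A(t)$ is a Gaussian $H$-valued random variable with bounded second moments.

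For continuity in time and $L^p$ moments we use the factorization method of Da Prato--Kwapie\'n--Zabczyk. Pick $\alpha\in(0,1/4)$. Then
\[
\int_0^T s^{-2\alpha}\|P_s\|_{HS}^2\,ds\lesssim \int_0^T s^{-2\alpha}\bigl(1+s^{-1/2}\bigr)\,ds<\infty,
\]
where we used $\sum_k e^{-2\mu_ks}\lesssim 1+s^{-1/2}$. Hence $Y_\alpha(s):=\int_0^s(s-r)^{-\alpha}P_{s-r}Q\,dW(r)$ has all Gaussian moments bounded uniformly in $s\in[0,T]$. The operator
\[
R_\alpha f(t):=\int_0^t(t-s)^{\alpha-1}P_{t-s}f(s)\,ds
\]
maps $L^p(0,T;H)$ into $\mathcal{C}([0,T],H)$ whenever $p>1/\alpha$, using strong continuity and contractivity of $P_t$. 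Therefore $W_A=\frac{\sin\pi\alpha}{\pi}R_\alpha Y_\alpha\in L^p(\Omega,\mathcal{C}([0,T],H))$ for every $p$. We expect this step to be the main obstacle: the whole difficulty is the graph-specific Hilbert--Schmidt bound, and it is settled by the Weyl asymptotics $\mu_k\sim k^2$ in \eqref{eigenvalue}.

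\textbf{Step 2: contraction.} Define
\[
\Gamma(X)(t):=P_tX_0+\int_0^tP_{t-s}B(X(s))\,ds+W_A(t).
\]
Assumption~\ref{b1} gives $\|B(h)-B(g)\|_H\le K\|h-g\|_H$ and $\|B(h)\|_H\le K\|h\|_H+C$, where $C=(\sum_j|b_j(0)|^2)^{1/2}$. Since $P_t$ is a contraction, $\Gamma$ maps $\mathcal{X}_T$ into itself: $P_\cdot X_0$ is continuous by strong continuity, and the drift integral is continuous as a Bochner convolution. Moreover,
\[
\mathbb{E}\sup_{t\le T}\|\Gamma X(t)-\Gamma Y(t)\|^p\le (KT)^p\,\mathbb{E}\sup_{t\le T}\|X-Y\|^p .
\]
To get a contraction we either take $T_0$ small with $KT_0<1$ and iterate on $[kT_0,(k+1)T_0]$, using that the equation is autonomous so restarting is legitimate, or we use the equivalent weighted norm $\mathbb{E}\sup_t e^{-\beta pt}\|X(t)\|^p$ with $\beta>K$. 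Banach's fixed point theorem then gives a unique solution in $\mathcal{X}_T$.

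\textbf{Step 3: uniqueness and the case $p<2$.} Uniqueness among all continuous adapted mild solutions follows from Gronwall's inequality applied pathwise, since the noise term cancels in the difference of two solutions. For $1\le p$ arbitrary, observe that the only stochastic input, $W_A$, has moments of every order. Hence the fixed-point argument works verbatim in any $L^p$, because $X_0$ enters linearly through the contraction $P_t$.
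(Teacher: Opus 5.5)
Your proposal is correct and follows the route the paper indicates. The paper only cites the contraction mapping approach, and you carry it out as a Banach fixed-point argument for the mild formulation in $L^p(\Omega,\mathcal{C}([0,T],H))$, using the factorization method and $\mu_k\sim k^2$ to get pathwise continuity and all moments of the additive stochastic convolution. Your opening remark about treating $p<2$ by localization is superfluous: as your Step 3 itself observes, with additive noise the contraction works directly for every $p\ge1$.
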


The linear equation corresponding to \eqref{SEE} reads as
\begin{flalign}\label{linear SEE}
		dv(t,X_0)=-A v(t,X_0)dt+QdW(t).
\end{flalign}
Let $\mathcal{B}_b(H)$ and $\mathcal{C}_b(H)$ be, respectively, the spaces of bounded measurable functions and bounded continuous functions on $H$. We now define two Markov semigroups
\begin{flalign}
	\mathcal{S}_t \psi(X_0) &:= \mathbb{E}\bigl( \psi(X(t,X_0)) \bigr), \, t\ge0,\, \psi \in \mathcal{B}_b(H), \label{nonlinear semigroup} 
	\\
	\mathcal{R}_t \psi(X_0) &:= \mathbb{E}\bigl( \psi(v(t,X_0)) \bigl), \, t\ge0,\, \psi \in \mathcal{B}_b(H), \label{linear semigroup}
\end{flalign}
which are generated by nonlinear equation \eqref{SEE}, and linear equation \eqref{linear SEE}, respectively. The definitions of strong Feller property, irreducibility, invariant measure and exponential ergodicity of Markov semigroup are stated below; see e.g. \cite{da1996ergodicity,friesen2023}.

\begin{dfn}\label{def}
Let $(\mathcal{A}_t)_{t\ge0}$ be either $(\mathcal{S}_t)_{t\ge0}$ or $(\mathcal{R}_t)_{t\ge0}$.
	\begin{itemize}
		\item[(i)] $(\mathcal{A}_t)_{t\ge0}$ is called strong Feller at time $t_0>0$ if $
		\mathcal{A}_{t_0} \psi \in \mathcal{C}_b(H)$ for every $\psi \in \mathcal{B}_b(H).$
		\item[(ii)] $(\mathcal{A}_t)_{t\ge0}$ is called irreducible at time $t_0>0$ if $
		\mathcal{A}_{t_0}(X_0,O)= \mathcal{A}_{t_0} \mathbf{1}_O(X_0) > 0$ holds for every nonempty open set $O \subset H$ and every $X_0 \in H$, where $\mathcal{A}_{t_0}(X_0,O)$ denotes the transition probability from initial value $X_0\in H$ to the set $O\subset H$ at time $t_0$ and $\mathbf{1}_O(\cdot)$ is the indicator function of set $O$.
		\item[(iii)] A probability measure $\mu \in \mathcal{P}(H)$ is an invariant measure for $(\mathcal{A}_t)_{t\ge0}$ if
		\[
		\int_H (\mathcal{A}_t \psi)(u) \, \mu(du) = \int_H \psi(u) \, \mu(du),
		\ \forall\, \psi \in \mathcal{B}_b(H),\, t \ge 0,
		\]where $\mathcal{P}(H)$ is the set of probability measures on $H$.
		\item[(iv)] Let $(\mathcal{A}_t)_{t \ge 0}$ admit a unique invariant measure $\mu$, and let $d$ be a metric on the space of probability measures $\mathcal{P}(H)$. If there exist a constant $\alpha > 0$ and a function $\mathcal{K} : H \to (0,+\infty)$ such that for every $X_0 \in H$ and all $t \ge 0$
		\[
		d\bigl(\mathcal{A}_t(X_0, \cdot), \mu \bigr) \le\mathcal{K}(X_0)e^{-\alpha t},
		\]
		then the invariant measure $\mu$ is called exponentially ergodic with respect to the metric $d$.
	\end{itemize}
\end{dfn}
\subsection{Short introduction to decomposition of trees}\label{decomposition}
 The purpose of this subsection is to decompose a complex tree in terms of its null space, which will be used to analyze the effect on strong Feller property and irreducibility of graph structure in Sections~\ref{main} and \ref{proof}. We begin with some basic definitions. 
\begin{dfn}
	\cite[Definitions 2.1 and 4.2]{jaume2018null} Given a vector $Y\in\mathbb{R}^n$, the \textbf{support} of $Y$ is \begin{flalign*}
		\mathbf{Supp}(Y):=\{v\in V(\Gamma):Y(v)\ne0\}.
	\end{flalign*}
	Let $S$ be a subset of $\mathbb{R}^n$. Then the support of $S$ is
	\begin{flalign*}
		\mathbf{Supp}(S):=\underset{Y\in S}{\cup}\mathbf{Supp}(Y).
	\end{flalign*}
	The support of a tree $\Gamma$, denoted by  $\mathbf{Supp}(\Gamma)$ is the set $\mathbf{Supp}(\mathcal{N}(\Gamma))$.
	The core of $\Gamma$, denoted by $\mathbf{Core}(\Gamma)$ is the set
	\begin{flalign*}
		\mathbf{Core}(\Gamma):=N(\mathbf{Supp}(\Gamma)).
	\end{flalign*}
Recall that $\mathcal{N}(\Gamma)$ is the null space of $\Gamma$ and $N(\mathbf{Supp}(\Gamma))$ denotes the neighbor of the set $\mathbf{Supp}(\Gamma)$.
\end{dfn}
Next, we present the decomposition of arbitrary tree.
\begin{dfn}\label{decom}
The \textbf{S-Set} of tree $\Gamma$, denoted by $\mathcal{F}_S(\Gamma)$, is defined by
	\begin{flalign*}
		\mathcal{F}_S(\Gamma):=\left\{S:\ S\ \text{is a connected component of } \Gamma\langle N[\mathbf{Supp}(\Gamma)]\rangle \right\}.
	\end{flalign*}
	The \textbf{N-Set} of tree $\Gamma$, denoted by $\mathcal{F}_N(\Gamma)$,  is defined to be the set of connected
	components of the remaining graph:
	\begin{flalign*}
		\mathcal{F}_N(\Gamma):=\left\{N:\ N\ is\ a\ connected\ component\ of\ \Gamma\backslash\mathcal{F}_S(\Gamma) \right\}.
	\end{flalign*}
	The pair of sets $\bigl(\mathcal{F}_S(\Gamma),\mathcal{F}_N(\Gamma)\bigl)$ is called the \textbf{null decomposition} of $\Gamma$. Every $S\in\mathcal{F}_S(\Gamma)$ is called an $\textbf{S-tree}$ and every $N\in\mathcal{F}_N(\Gamma)$ is called an $\textbf{N-tree}$. 
	
	For an $S$-tree $S$, the  \textbf{A-set}, denoted by $\mathcal{F}_A(S)$, is the set of all connected components that remain after taking away all the edges between core vertices. In particular, if $\mathcal{F}_A(S)=\{S\}$, i.e. $S$ has no edges between core vertices, then $S$ is called an \textbf{S-atom}. 
\end{dfn}
The $A$-set of an $S$-tree $S$ is a set of $S$-atoms, which are the minimal element in the analysis of effect on strong Feller property of graph structure.
Some useful properties of the decomposition of a tree $\Gamma$ are collected in the following lemma (see \cite[Lemma 4.3]{jaume2018null}).
\begin{lemma}\label{suppcore}
	For a tree $\Gamma$, the sets $\mathbf{Supp}(\Gamma)$, $\mathbf{Core}(\Gamma)$ and $V(\mathcal{F}_N(\Gamma))$ form a weak partition of $V(\Gamma)$, i.e., \begin{align*}
		V(\Gamma)=\mathbf{Supp}(\Gamma)\cup\mathbf{Core}(\Gamma)\cup V(\mathcal{F}_N(\Gamma))
	\end{align*}and some of the sets on the right hand side are allowed to be empty. Here $\mathbf{Supp}(\Gamma)$ and $\mathbf{Core}(\Gamma)$ can further be partitioned as \begin{flalign*}
		\mathbf{Supp}(\Gamma)=\underset{S\in\mathcal{F}_S(\Gamma)}{\cup}\mathbf{Supp}(S),\quad \mathbf{Core}(\Gamma)=\underset{S\in\mathcal{F}_S(\Gamma)}{\cup}\mathbf{Core}(S).
	\end{flalign*}
\end{lemma}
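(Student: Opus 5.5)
The plan is to translate the statement into the language of maximum matchings on trees and then read it off from the Gallai--Edmonds structure theorem, which is particularly simple for bipartite graphs.

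\textbf{Step 1 (first partition).} First I will prove the support characterization. For a forest $F$, the rank of $A(F)$ equals $2\nu(F)$, since the only nonvanishing terms in the expansion of the characteristic polynomial of a forest come from matchings. So $\dim\mathcal N(F)=|V(F)|-2\nu(F)$. By Cramer's rule, a vertex $v$ lies in $\mathbf{Supp}(\Gamma)$ iff deleting $v$ lowers the nullity, i.e.\ iff $\dim\mathcal N(\Gamma-v)=\dim\mathcal N(\Gamma)-1$. Using the rank formula on $\Gamma$ and on $\Gamma-v$, this is equivalent to $\nu(\Gamma-v)=\nu(\Gamma)$, i.e.\ some maximum matching of $\Gamma$ misses $v$. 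Hence $\mathbf{Supp}(\Gamma)$ is exactly the Gallai--Edmonds set $D(\Gamma)$.

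The Gallai--Edmonds theorem gives $A(\Gamma)=N(D)\setminus D$ and $C(\Gamma)=V\setminus(D\cup A)$. The components of $\Gamma\langle D\rangle$ are factor-critical, and a bipartite factor-critical graph is $K_1$. So $D$ is independent, $N(D)\cap D=\emptyset$, and therefore $\mathbf{Core}(\Gamma)=N(\mathbf{Supp}(\Gamma))=A(\Gamma)$. By Definition~\ref{decom}, $V(\mathcal F_N(\Gamma))=V\setminus N[\mathbf{Supp}(\Gamma)]=C(\Gamma)$, which gives the weak partition.

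\textbf{Step 2 (restriction to $S$-trees).} For an $S$-tree $S$, every vertex of $S$ lies in $\mathbf{Supp}(\Gamma)\cup\mathbf{Core}(\Gamma)$. Every neighbor in $\Gamma$ of a support vertex is a core vertex, so it lies in the same component $S$. Consequently the edges leaving $S$ join core vertices of $S$ to vertices of $C(\Gamma)$. Take $Y\in\mathcal N(\Gamma)$. At any $w\in V(S)$, the neighbors of $w$ outside $S$ are not in $\mathbf{Supp}(\Gamma)$, so $Y$ vanishes there. Hence $Y|_S\in\mathcal N(S)$, and this gives $\mathbf{Supp}(\Gamma)\cap V(S)\subset\mathbf{Supp}(S)$.

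\textbf{Step 3 (reverse inclusion, the main obstacle).} The difficulty is that a null vector of $S$ extended by zero need not obviously be a null vector of $\Gamma$: it could be nonzero at a core vertex adjacent to $C(\Gamma)$. I will avoid this with matchings. By Gallai--Edmonds, every maximum matching $M$ of $\Gamma$ matches $C$ perfectly within $C$ and matches each $A$-vertex to a distinct $D$-vertex. Therefore $M\cap E(S)$ is a matching of $S$ that covers $\mathbf{Core}(\Gamma)\cap V(S)$ and leaves uncovered exactly the $D$-vertices of $S$ missed by $M$.

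Next I will show that $M\cap E(S)$ is maximum in $S$. If $S$ had a larger matching, swapping it in for $M\cap E(S)$ would produce a matching of $\Gamma$ larger than $M$, because the edges of $M$ outside $S$ lie in $C$ or in other components $S'$ and remain compatible. Conversely, any maximum matching of $S$ combined with $M$ off $S$ is again maximum in $\Gamma$. Thus a vertex of $S$ is missed by some maximum matching of $S$ iff it is missed by some maximum matching of $\Gamma$. Applying the characterization of Step 1 to the tree $S$ yields $\mathbf{Supp}(S)=\mathbf{Supp}(\Gamma)\cap V(S)$.

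\textbf{Step 4 (conclusion).} Taking the union over $S\in\mathcal F_S(\Gamma)$ gives the claimed partition of $\mathbf{Supp}(\Gamma)$. Applying $N(\cdot)$ inside $S$, together with the fact that all neighbors of support vertices lie in $S$, gives the partition of $\mathbf{Core}(\Gamma)$ into the sets $\mathbf{Core}(S)$. The two partitions are disjoint across different $S$ because the $S$-trees are distinct connected components.
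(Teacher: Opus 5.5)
Your argument is correct, and it takes a genuinely different route from the paper, which gives no proof of this lemma and simply imports it from \cite[Lemma 4.3]{jaume2018null}.

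You first identify $\mathbf{Supp}(\Gamma)$, $\mathbf{Core}(\Gamma)$ and $V(\mathcal{F}_N(\Gamma))$ with the Gallai--Edmonds sets $D$, $A$, $C$. This uses $\operatorname{rank}A(F)=2\nu(F)$ for forests, the nullity-drop criterion for support vertices, and the fact that a bipartite factor-critical graph is $K_1$. With that identification, disjointness of the weak partition is automatic. You then handle the one delicate point, the reverse inclusion $\mathbf{Supp}(S)\subset\mathbf{Supp}(\Gamma)$, by a matching-exchange argument. Done linear-algebraically, this is awkward: a null vector of an $S$-tree extended by zero fails to be a null vector of $\Gamma$ exactly when it is nonzero at a core vertex joined to the $N$-part. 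Your exchange argument checks out, since no edge of $M\setminus E(S)$ meets $V(S)$.

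Your route also gives extra results for free. The same identification yields $\dim\mathcal{N}(\Gamma)=n-2\nu(\Gamma)=|D|-|A|=|\mathbf{Supp}(\Gamma)|-|\mathbf{Core}(\Gamma)|$ and $|\mathbf{Core}(\Gamma)|=\nu(\Gamma)-|V(\mathcal{F}_N(\Gamma))|/2$. These are facts the paper borrows separately from \cite[Theorem 3.15]{jaume2017s} and \cite[Corollary 4.15]{jaume2018null}. The price is reliance on the Gallai--Edmonds theorem and the Sachs-type rank formula, rather than a direct argument on null vectors.

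Two small points should be tightened.

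\textbf{The nullity-drop criterion.} ``By Cramer's rule'' only really covers nullity one, where $\operatorname{adj}A=c\,xx^\top$ gives $\det A(v)=c\,x_v^2$. For general nullity, use symmetry instead. Suppose $x\in\mathcal{N}(\Gamma)$ has $x_v\neq0$, and let $y\in\mathcal{N}(\Gamma-v)$ be extended by zero to $\tilde y$. Then $A\tilde y$ is supported at $v$, and $0=x^\top A\tilde y=x_v(A\tilde y)_v$, so $\tilde y\in\mathcal{N}(\Gamma)$. Hence $\mathcal{N}(\Gamma-v)\cong\{x\in\mathcal{N}(\Gamma):x_v=0\}$. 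If no such $x$ exists, restricting $\mathcal{N}(\Gamma)$ to $V\setminus\{v\}$ shows the nullity does not drop. Similarly, the rank formula needs symmetry of $A(F)$ so that the multiplicity of the root $0$ equals the nullity.

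\textbf{Notation.} Rename the Gallai--Edmonds sets, since $A(\Gamma)$ and $D(\Gamma)$ already denote the adjacency and degree matrices in this paper.
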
 
We denote by $\textbf{ConnE}(\Gamma)$ the set of edges that are neither in $\mathcal{F}_S(\Gamma)$ nor in $\mathcal{F}_N(\Gamma)$, i.e.  $\textbf{ConnE}(\Gamma):=E(\Gamma)\backslash\left(E(\mathcal{F}_S(\Gamma))\cup E(\mathcal{F}_N(\Gamma))\right).$ The set of edges in $E(S)$ which do not belong to any $S$-atom of $\mathcal{F}_A(S)$ is denoted by $\textbf{BondE}(S):=E(S)\backslash E(\mathcal{F}_A(S))$.
\begin{remark}
	Note that for every $e\in\textbf{ConnE}(\Gamma)$, the endpoints belong to $ V(\mathcal{F}_N(\Gamma))$ and $ V(\mathcal{F}_S(\Gamma))$, respectively. Without loss of generality, we suppose that $e(0)\in V(\mathcal{F}_N(\Gamma))$ and $e(1)\in V(\mathcal{F}_S(\Gamma))$. Then we claim that  $e(0)\notin$\textbf{Core}$(\Gamma)\cup\textbf{Supp}(\Gamma)$ and $e(1)\in\textbf{Core}(\Gamma)$. The former holds clearly due to Lemma \ref{suppcore}. For the latter, we suppose by contradiction that $e(1)\in\textbf{Supp}(\Gamma)$. Then by the definition of $\mathcal{F}_S(\Gamma)$ and the fact that $e(0)\in N(e(1))$, $e(0)$ must be a vertex of an $S$-tree, which leads to a contradiction. Thus the claim is proved.
	
	Moreover, for any $e\in\textbf{BondE}(S)$, both endpoints $e(0),e(1)\in\textbf{Core}(S)$.
\end{remark}
To end this part, we present an example to explain these definitions on trees.
\begin{example}\label{Treegamma}
	Consider the tree $\Gamma$ in Fig.~\ref{tree gamma} with vertex set $V(\Gamma)=\{1,2,3,4,5,6\}$ and edge set $E(\Gamma)=\{e_{12},e_{13},e_{14},e_{15},e_{56}\}$.
    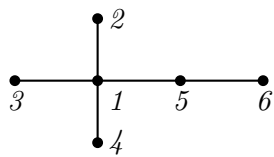
\begin{figure}[htbp]
		\centering
		\begin{tikzpicture}[scale=1.1,line width=0.8pt]
			
			\filldraw (0,0) circle (1.5pt) node[below right]{1};
			
			\draw (0,0) -- (-1,0) node[below] {3};
			\filldraw (-1,0) circle (1.5pt);
			
			\draw (0,0) -- (0,0.75) node[right] {2};
			\filldraw (0,0.75) circle  (1.5pt);
			
			\draw (0,0) -- (0,-0.75) node[right] {4};
			\filldraw (0,-0.75) circle  (1.5pt);
			
			\draw (0,0)--(1,0) node[below]{5};
			\filldraw (1,0) circle  (1.5pt);
			
			\draw (1,0)--(2,0) node[below]{6};
			\filldraw (2,0) circle  (1.5pt);

		\end{tikzpicture}
		\caption{An illustrative example}
		\label{tree gamma}
	\end{figure}
	The adjacency matrix $A(\Gamma)\in\mathbb{R}^{6\times 6}$ and incidence matrix $M(\Gamma)\in\mathbb{R}^{6\times 5}$ are 
	\[
	A(\Gamma) = 
	\begin{bmatrix}
		0 & 1 & 1 & 1 & 1 & 0 \\
		1 & 0 & 0 & 0 & 0 & 0 \\
		1 & 0 & 0 & 0 & 0 & 0 \\
		1 & 0 & 0 & 0 & 0 & 0 \\
		1 & 0 & 0 & 0 & 0 & 1 \\
		0 & 0 & 0 & 0 & 1 & 0
	\end{bmatrix}, \quad
	M(\Gamma) = 
	\begin{bmatrix}
		1 & 1 & 1 & 1 & 0 \\
		1 & 0 & 0 & 0 & 0 \\
		0 & 1 & 0 & 0 & 0 \\
		0 & 0 & 1 & 0 & 0 \\
		0 & 0 & 0 & 1 & 1 \\
		0 & 0 & 0 & 0 & 1
	\end{bmatrix},
	\]respectively, and $A(\Gamma)$ has an eigenvalue $0$ with multiplicity $2$. The null space $\mathcal{N}(\Gamma)$ is generated by the eigenvectors of $0$ eigenvalue
	\begin{flalign*}
		\left\{[0,1,-1,0,0,0]^\top,[0,1,0,-1,0,0]^\top\right\}.
	\end{flalign*}
	Thus \textbf{Supp}$(\Gamma)=\{2,3,4\}$ and \textbf{Core}$(\Gamma)=\{1\}$.
	According to Definition \ref{decom}, we have \[\mathcal{F}_S(\Gamma)=\{S:S=\Gamma\langle N[{2,3,4}]\rangle\},\; \mathcal{F}_N(\Gamma)=\{N:N=\Gamma\langle N[5,6]\rangle\},\;  \textbf{ConnE}(\Gamma)=\{e_{15}\}.\]
	In this case, the S-tree $S=\Gamma\langle N[2,3,4]\rangle$ is also an S-atom.
\end{example}
	\section{Main results}\label{main}
 In this section, we present the main results on the effect of graph structure on the strong Feller property and irreducibility for SPDE on a finite tree. Then  we show the existence and exponential ergodicity of a unique invariant measure for the considered SPDE with a dissipative nonlinearity.
	
To analyze the strong Feller property of the Markov semigroups $(\mathcal{S}_t)_{t\ge0}$ and $(\mathcal{R}_t)_{t\ge0}$ (see \eqref{nonlinear semigroup} and \eqref{linear semigroup}), we introduce the following conditions on the eigenvector of the nonzero eigenvalue of the matrix $\tilde{A}(\Gamma)$ and the nonlinearity $B$. Recall that $\sigma(\tilde{A}(\Gamma))$ is given in Lemma \ref{operatorproperty}. The matrix $\tilde{A}(\Gamma)$ is uniquely determined by the ordering of the vertices, and hence there exists a one-to-one correspondence between the entries of the eigenvectors of $\tilde{A}(\Gamma)$ and the vertices.
	\begin{assumption}\label{nonzero eigenvalue} \begin{enumerate}[label=(\roman*)]
			\item For every nonzero eigenvalue $\lambda \in \sigma(\tilde{A}(\Gamma))\cap(-1,1)$, the vertices corresponding to the zero entries of  eigenvector of $\lambda$ are non-adjacent;
			\item The nonlinearity $B:H\rightarrow H$ satisfies $B(H) \subseteq \text{Im}(Q)$, where $\text{Im}(Q)$ denotes the range of operator $Q$.
		\end{enumerate}
	\end{assumption}
The main results concerning quantifying the effect of graph structure on strong Feller property and irreducibility of Markov semigroups $(\mathcal{S}_t)_{t\ge0}$ and $(\mathcal{R}_t)_{t\ge0}$ are stated as follows.
\begin{theorem}\label{main theorem}
Let Assumptions \ref{b1} and \ref{nonzero eigenvalue}(i) hold. 
 The sharp upper bound on the number of noise‑free edges while maintaining the strong Feller property and irreducibility of semigroup $(\mathcal{R}_t)_{t\ge0}$ for SPDE \eqref{linear SEE} on tree $\Gamma$ is
		$$
		|\mathcal{Z}(\Gamma)|\le \min\{m -|\textbf{Supp}(\Gamma)| + |\textbf{Core}(\Gamma)|,m-1\}.
		$$
		
		Furthermore, let Assumption \ref{nonzero eigenvalue}(ii) hold, then the semigroup $(\mathcal{S}_t)_{t\ge0}$ for SPDE \eqref{SEE} fulfills the same upper bound to that of $(\mathcal{R}_t)_{t\ge0}$.
\end{theorem}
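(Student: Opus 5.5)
The plan is to reduce both properties of $(\mathcal{R}_t)_{t\ge0}$ to a null-controllability statement, then to a spectral condition on eigenfunctions of $\Delta$, and finally to linear algebra on $\mathcal{N}(\Gamma)$ using the null decomposition.

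\emph{Step 1: reduction to eigenfunctions.} Since \eqref{linear SEE} is an Ornstein--Uhlenbeck equation, \cite[Theorem 7.2.1]{da1996ergodicity} says $\mathcal{R}_t$ is strong Feller at $t$ iff $\mathrm{Im}(P_t)\subset \mathrm{Im}(Q_t^{1/2})$, where $Q_t=\int_0^t P_sQQ^*P_s\,ds$. Irreducibility of the Gaussian law $\mathcal{N}(P_tX_0,Q_t)$ holds iff $\ker Q_t=\{0\}$, i.e. $\mathrm{Im}(Q_t^{1/2})$ is dense. By duality, the range inclusion is the final-state observability inequality
\[
\|P_t h\|_H^2\le C_t\int_0^t\|QP_sh\|_H^2\,ds .
\]
Here $Q$ is the orthogonal projection onto the noisy edges $\mathcal{Y}(\Gamma)$. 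Expanding $h$ in the basis of Lemma~\ref{operatorproperty}(III), a necessary condition for either property is immediate: no eigenfunction in any eigenspace of $\Delta$ may vanish identically on every edge of $\mathcal{Y}(\Gamma)$. Otherwise $P_t\phi\neq 0$ while $QP_s\phi=0$ for all $s$, which kills both density and the inclusion. For sufficiency I would use that $Q$ restricted to each eigenspace is injective, together with the fact that eigenspaces have uniformly bounded multiplicity (at most $n$) and gaps $\mu_{k+1}-\mu_k$ growing along the families $-(\arccos\mu+2l\pi)^2$. An Ingham-type (or Müntz-type) inequality then upgrades the modewise condition to the observability estimate. I expect this to be the main analytic obstacle: the modewise constants $\|Q\phi\|_H$ must be bounded below uniformly in $l$. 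I would obtain this from the explicit formula in Lemma~\ref{operatorproperty}(II)(iii). On an edge, $\phi^{2,l}_j$ is determined by its two endpoint values, which are the same vertex vector for every $l$ in the family, and $|\sin\sqrt{-\lambda}|=\sqrt{1-\mu^2}$ is independent of $l$. Hence $\|Q\phi^{2,l}\|$ is comparable to a fixed positive number.

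\emph{Step 2: which eigenfunctions can vanish on an edge.} For $\lambda=0$ the eigenfunction is constant, so at least one noisy edge is needed; this gives the bound $m-1$. For $\lambda=-k^2\pi^2$ with $k\ge1$, the tree is bipartite, so the function $\pm\cos(k\pi x_j)$ with signs given by the bipartition satisfies continuity and Kirchhoff. I would check that this eigenspace is one-dimensional on a tree, so this function is the only eigenfunction, and it vanishes on no edge. For $\lambda\in\sigma_2$, an eigenfunction vanishes on $e_j$ iff both endpoint values vanish, and the vertex values form an eigenvector of $\tilde A(\Gamma)$ for $\mu=\cos\sqrt{-\lambda}$. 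When $\mu\neq0$, Assumption~\ref{nonzero eigenvalue}(i) forbids two adjacent zero vertices, so these eigenfunctions vanish on no edge. When $\mu=0$, the vertex vectors range over $\ker\tilde A=\mathcal{N}(\Gamma)$. The condition therefore becomes: the linear map sending $Y\in\mathcal{N}(\Gamma)\setminus\{0\}$ to its values at endpoints of edges in $\mathcal{Y}(\Gamma)$ is injective.

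\emph{Step 3: counting via the null decomposition.} By Lemma~\ref{suppcore} and the tree nullity formula $\dim\mathcal{N}(\Gamma)=|\mathbf{Supp}(\Gamma)|-|\mathbf{Core}(\Gamma)|$ from \cite{jaume2018null}, injectivity requires at least $\dim\mathcal{N}(\Gamma)$ noisy edges, each with an endpoint in $\mathbf{Supp}(\Gamma)$. This yields the necessity of $|\mathcal{Z}(\Gamma)|\le m-|\mathbf{Supp}(\Gamma)|+|\mathbf{Core}(\Gamma)|$. For sharpness I would work S-atom by S-atom. Null vectors are supported in S-trees, and they split across the A-set, because supports are independent and adjacent only to core vertices. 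In each S-atom, I would pick noisy edges joining $|\mathbf{Supp}(S)|-|\mathbf{Core}(S)|$ suitably chosen support vertices to the core so that the restriction map is injective on that atom's null space. The remaining edges, namely those in N-trees, $\textbf{ConnE}(\Gamma)$ and $\textbf{BondE}(S)$, are left noise-free. Combining this configuration with the $m-1$ constraint shows the minimum is attained.

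\emph{Step 4: nonlinear semigroup.} Under Assumption~\ref{nonzero eigenvalue}(ii), write $B(X)=Q\,\beta(X)$, with $\beta$ bounded-Lipschitz on the noisy components. Girsanov's theorem, applied with the Novikov condition in a localized form, shows that the laws of $X(t,X_0)$ and $v(t,X_0)$ are equivalent. This transfers irreducibility directly. Strong Feller then follows by the standard argument in \cite{da1996ergodicity}: combine the mild formulation \eqref{mildsolution} with the strong Feller property of $\mathcal{R}_t$ and the equivalence of laws uniformly in $X_0$ on bounded sets. The same argument shows that the bound cannot be improved for $\mathcal{S}_t$.
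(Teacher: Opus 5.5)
Your proposal is correct, and at the analytic core it takes a genuinely different route from the paper.

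\emph{Analytic reduction.} The paper replaces $\mathrm{Im}(P_t)\subset\mathrm{Im}(Q_t^{1/2})$ by $\ker(Q_t^{1/2})=\{0\}$, using a claimed kernel/range ``equivalence''. It then only checks $Q\phi_k\neq0$ for basis eigenfunctions. But for self-adjoint operators, $\ker Q_t^{1/2}\subset\ker P_t$ only gives $\overline{\mathrm{Im}(P_t)}\subset\overline{\mathrm{Im}(Q_t^{1/2})}$. That is density of $\mathrm{Im}(Q_t^{1/2})$, which is exactly the irreducibility criterion. You instead keep the quantitative final-state observability inequality and close it with three ingredients:
\begin{itemize}
\item injectivity of $Q$ on each whole eigenspace, which is the correct condition when $\mu=0$ has multiplicity;
\item a lower bound $\|Q\phi\|\ge c\|\phi\|$ uniform in $l$, coming from the $l$-independent vertex data and the edge form $\tfrac12(a^2+b^2)-\mu ab$, which is positive definite at leading order;
\item a Müntz/Fattorini--Russell biorthogonal family. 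Ingham's inequality is the hyperbolic counterpart and not the right tool here.
\end{itemize}
This gives an actual proof of the sufficiency half, which the paper's shortcut does not by itself justify. The cost is twofold. First, you must check a uniform gap between \emph{all} distinct eigenvalues, across families and not only along each family. This holds because the values $\sqrt{\lambda}$ lie in finitely many residue classes mod $2\pi$. Second, you need a bound such as $\|q_k\|_{L^2(0,t)}\le C_\varepsilon e^{\varepsilon\lambda_k}$ with $\varepsilon<t$.

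\emph{Graph side.} The paper's count $\mathrm{rank}(Q)\ge\mathrm{rank}(M(S)^\top G)=d$ yields only necessity and never exhibits a configuration attaining the bound. Your restriction-to-noisy-endpoints map gives the same necessity. You should invoke independence of $\mathbf{Supp}(\Gamma)$ already at that point, since it is what makes each noisy edge contribute at most one coordinate. Your map also gives sharpness constructively. The ``suitably chosen'' support vertices exist because $\mathcal{N}(A)$ is an $(s-c)$-dimensional subspace of $\mathbb{R}^{\mathbf{Supp}(A)}$, where $s=|\mathbf{Supp}(A)|$ and $c=|\mathbf{Core}(A)|$. Hence some $s-c$ coordinates project injectively; say so. Globally, one can equally pick $d$ linearly independent rows of $M(\Gamma)^\top G$.

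\emph{Nonlinear semigroup.} Using equivalence of laws to transfer the failure of both properties to $\mathcal{S}_t$ makes its sharpness explicit, which the paper omits. For the positive strong Feller statement, mere equivalence is not what the standard argument uses. It needs total-variation closeness of the laws of $X(\varepsilon,X_0)$ and $v(\varepsilon,X_0)$ as $\varepsilon\to0$, locally uniformly in $X_0$. This is combined with $\mathcal{S}_t=\mathcal{S}_\varepsilon\mathcal{S}_{t-\varepsilon}$ and the strong Feller property of $\mathcal{R}_\varepsilon$. Also, $\beta=B$ has linear growth rather than being bounded, which is why the localization you mention is needed.
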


The results for the strong Feller and irreducibility properties are established for trees, as the underlying proofs rely on a null decomposition specific to tree structures (see Section \ref{proof} for the proof). The characterization of these two properties on general graphs presents a formidable challenge. Nevertheless, Assumption~\ref{nonzero eigenvalue}(i) holds for a broad class of trees, including chain graph and star graph. As a corollary, we can obtain the strong Feller property and irreducibility for SPDE on these graphs.

	\begin{figure}[htbp]
		\centering
		\begin{minipage}[c]{0.45\textwidth}
			\centering
			\vspace{1cm}
			\begin{tikzpicture}[scale=1.2,line width=0.8pt]
				\draw(0,0)--(1,0);
                \draw(1,0)--(2,0);
                \filldraw(0,0) circle(1.5pt);
                \filldraw(1,0) circle(1.5pt);
				\draw[dashed] (2,0) -- (3,0);
				\draw(3,0)--(4,0);
                \filldraw(3,0) circle(1.5pt);
                \filldraw(4,0) circle(1.5pt);
				\node[below] at (0,0) {$1$};
				\node[below] at (1,0) {$2$};
				\node[below] at (3,0) {$m$};
				\node[below] at (4,0) {$m+1$};
			\end{tikzpicture}
			\caption{Chain graph $L$ with $m$ edges} 
			\label{chain_graph}
		\end{minipage}
	    \hspace{0.2cm}
		\begin{minipage}[b]{0.45\textwidth}
			\centering
			\[
			\tilde{A}(L) = 
			\begin{bmatrix}
			0           & 1           & 0           & \cdots      & \cdots & 0 \\
			\frac{1}{2} & 0           & \frac{1}{2} & 0           & \cdots & 0 \\
			0           & \frac{1}{2} & 0           & \frac{1}{2} & \ddots &\vdots \\
			\vdots      & 0           & \ddots      & \ddots      & \ddots & 0 \\
			0           & \vdots      & \ddots      & \frac{1}{2} & 0      & \frac{1}{2} \\
			0           & 0           & \cdots      & 0           & 1      & 0
			\end{bmatrix}
			\]
		\end{minipage}
	\end{figure}
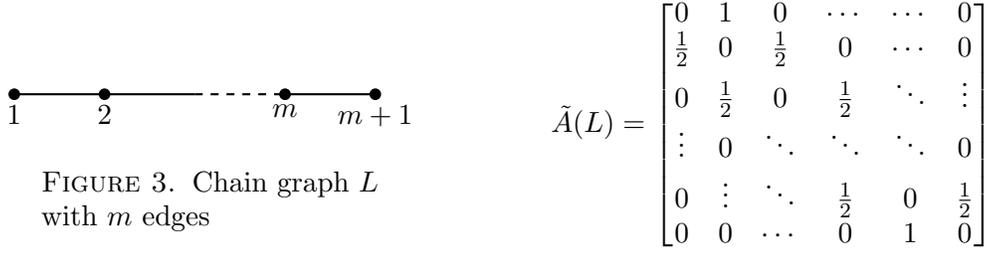

\begin{corollary}\label{bound_chain}
	 Let $L$ be the chain graph with $m$ edges, as shown in Fig.~\ref{chain_graph}. For the SPDE \eqref{SEE} on $L$, under Assumptions \ref{b1} and \ref{nonzero eigenvalue}(ii), Theorem \ref{main theorem} holds with $|\mathcal{Z}(L)| \le m-1$.
	\begin{proof}
		It suffices to verify that the chain graph satisfies Assumption \ref{nonzero eigenvalue}(i). One can calculate that the eigenvalues and eigenvectors of the normalized adjacency matrix $\tilde{A}(L)\in\mathbb{R}^{(m+1)\times (m+1)}$ are:
		\begin{flalign*}
			\lambda_k&=\cos\left(\tfrac{k\pi}{m}\right)=:\cos(\theta_k),\, k=0,1,\dots,m,\\
			U_k&=\bigl[1,\cos(\theta_k),\cos(2\theta_k),\dots,\cos(m\theta_k)\bigl]^\top.
		\end{flalign*}
	If $m$ is odd, $\tilde{A}(L)$ has no $0$ eigenvalue. For each $k=0,1,\ldots,m$, every component of the corresponding eigenvector $U_k$ is nonzero, thus all eigenvectors satisfy Assumption~\ref{nonzero eigenvalue}(i). The semigroups are strong Feller and irreducible, provided at least one edge is noisy, i.e. $|\mathcal{Z}(L)|\le m-1$. If $m$ is even, $\tilde{A}(L)$ admits a single $0$ eigenvalue (when $k=\frac{m}{2}$), and corresponding eigenvector has the form
	\begin{flalign*}
		U_{\frac{m}{2}}=\bigl[1,0,-1,0,\ldots,(-1)^\frac{m}{2}\bigr]^\top,
	\end{flalign*} which also satisfy Assumption~\ref{nonzero eigenvalue}(i).
	Hence applying Theorem \ref{main theorem} yields $|\mathcal{Z}(L)|\le m -|\textbf{Supp}(L)| + |\textbf{Core}(L)|=m-(\frac{m}{2}+1)+\frac{m}{2}=m-1$.
	\end{proof}
\end{corollary}

	\begin{figure}[htbp]
		\centering
		\begin{minipage}[c]{0.45\textwidth}
			\centering
			\vspace{0.6cm}
			\begin{tikzpicture}[scale=1.2,line width=0.8pt]
			\filldraw (0,0) circle (1.5pt) node[below=2pt]{c};
			
			
			\draw (0,0) -- (0,1.1) node[above] {$m$};
			\filldraw (0,1.1) circle (1.5pt);
			
			\draw (0,0) -- (-1,0.8) node[above] {1};
			\filldraw (-1,0.8) circle (1.5pt);

			\draw (0,0)--(-1,0.2) node[left]{2};
			\filldraw (-1,0.2) circle (1.5pt);
			
			\draw (0,0)--(-1,-0.7) node[left]{3};
			\filldraw (-1,-0.7) circle (1.5pt);
			
			\draw (0,0)--(-0.2,-1) node[left]{4};
			\filldraw (-0.2,-1) circle (1.5pt);
			
			\draw (0,0)--(1,-0.8) node[right]{5};
			\filldraw (1,-0.8) circle (1.5pt);
			
			\draw[dashed] (0,0) -- (1.5,0) ;
			\draw[dashed] (0,0) -- (0.9,0.9);
			\end{tikzpicture}
	     	\caption{Star graph $R$ with $m$ edges}
	     	\label{star_graph}
		\end{minipage}
		\hspace{0.1cm}
		\begin{minipage}[b]{0.45\textwidth}
			\centering
			\[
			\tilde{A}(R) = 
			\begin{bmatrix}
				0      & \frac1m       & \frac1m       & \cdots &  \frac1m \\
				1      & 0       & 0       & \cdots &  0 \\
				1      & 0       & \ddots  & \ddots & \vdots    \\
				\vdots & \vdots  & \ddots  & \ddots & 0 \\
				1      & 0       & \cdots  & 0      & 0
			\end{bmatrix}
			\]
		\end{minipage}
	\end{figure}
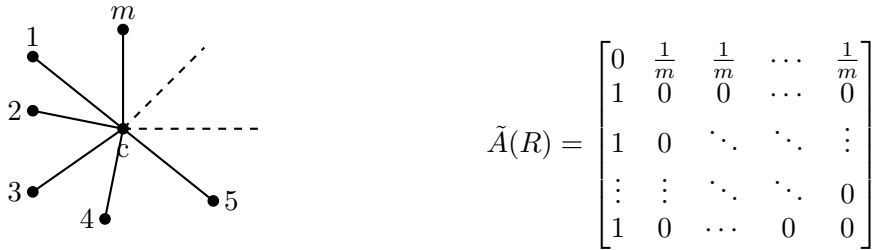

\begin{corollary}\label{bound_star}
	Let $R$ be a star graph with $m$ edges, one central vertex $\{c\}$ and $m$ boundary vertices $\{1,2,\ldots,m\}$, as shown in Fig.~\ref{star_graph}. For the SPDE \eqref{SEE} on $R$, under Assumptions \ref{b1} and \ref{nonzero eigenvalue}(ii), Theorem \ref{main theorem} holds with $|\mathcal{Z}(R)| \le 1$.
\end{corollary}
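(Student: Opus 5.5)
The plan is to apply Theorem~\ref{main theorem} directly. This needs two things: a check that the star graph $R$ satisfies Assumption~\ref{nonzero eigenvalue}(i), and a computation of $\textbf{Supp}(R)$ and $\textbf{Core}(R)$. Throughout I take $m\ge2$. For $m=1$ the star is the chain with one edge, which Corollary~\ref{bound_chain} already covers; there the bound is $m-1=0$, since $A(R)$ is invertible and the support is empty.

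\emph{Step 1: the spectrum of $\tilde A(R)$.} Write an eigenvector of $\tilde A(R)=D^{-1}(R)A(R)$ as $(x_c,x_1,\dots,x_m)$, with eigenvalue $\lambda$. The eigen-equations are
\[
\lambda x_c=\tfrac1m\textstyle\sum_{i=1}^m x_i, \qquad \lambda x_i=x_c \quad (i=1,\dots,m).
\]
Suppose $\lambda\neq0$. The second equation forces $x_i=x_c/\lambda$ for every $i$, so all leaf entries are equal. Substituting into the first equation gives $\lambda x_c=x_c/\lambda$. We must have $x_c\neq0$, since otherwise the whole vector vanishes. Hence $\lambda^2=1$.

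So the only nonzero eigenvalues of $\tilde A(R)$ are $\pm1$, and $\sigma(\tilde A(R))\cap(-1,1)\setminus\{0\}=\emptyset$. Assumption~\ref{nonzero eigenvalue}(i) therefore holds vacuously. Assumptions~\ref{b1} and \ref{nonzero eigenvalue}(ii) are hypotheses of the corollary, so Theorem~\ref{main theorem} applies.

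\emph{Step 2: support and core.} The null space $\mathcal N(R)$ of $A(R)$ consists of the vectors $Y$ satisfying two conditions:
\[
Y(c)=0 \quad(\text{rows of the leaves}), \qquad \textstyle\sum_{i=1}^m Y(i)=0 \quad(\text{row of the center}).
\]
For $m\ge2$ this space has dimension $m-1$ and is spanned by the vectors $e_1-e_i$, $i=2,\dots,m$, supported on the leaves. Every leaf lies in the support of one of these vectors, so $\textbf{Supp}(R)=\{1,\dots,m\}$ and $|\textbf{Supp}(R)|=m$. The core is the neighbourhood of the support, so $\textbf{Core}(R)=N(\textbf{Supp}(R))=\{c\}$ and $|\textbf{Core}(R)|=1$. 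Consistently with Lemma~\ref{suppcore}, $R$ itself is a single $S$-atom, the N-set is empty, and no edge joins two core vertices.

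\emph{Step 3: conclusion.} Plugging these values into Theorem~\ref{main theorem} gives
\[
|\mathcal Z(R)|\le\min\{m-m+1,\;m-1\}=1 \qquad (m\ge2).
\]
This bound is sharp by the sharpness part of Theorem~\ref{main theorem}, and it holds for both $(\mathcal R_t)_{t\ge0}$ and $(\mathcal S_t)_{t\ge0}$.

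There is no real obstacle here. The only point needing care is Step 1: one must note that $0$ is excluded from Assumption~\ref{nonzero eigenvalue}(i). The zero eigenvectors do have adjacent zero entries (the center together with any leaf on which the vector vanishes), but this does not violate the assumption, because only nonzero eigenvalues in $(-1,1)$ are constrained.
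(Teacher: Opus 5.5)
Your proposal is correct and follows the same route as the paper. You check that $\sigma(\tilde{A}(R))\cap(-1,1)=\{0\}$, so Assumption~\ref{nonzero eigenvalue}(i) holds vacuously; you identify $R$ as an $S$-atom with $\mathbf{Supp}(R)=\{1,\dots,m\}$ and $\mathbf{Core}(R)=\{c\}$; and you substitute these into Theorem~\ref{main theorem}. Your explicit derivation of the spectrum and null space spells out what the paper only states. So does your remark that for $m=1$ the bound from Theorem~\ref{main theorem} is actually $0$, not $1$: the paper's claim that $0$ has multiplicity $m-1$ tacitly assumes $m\ge 2$.
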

\begin{proof}
For normalized adjacency matrix $\tilde{A}(R)\in\mathbb{R}^{(m+1)\times(m+1)}$, we can calculate that the eigenvalues are $\{-1,1,0\}$, where $0$ eigenvalue has $m-1$ multiplicity. Let $e_j(1)=c$ for all $j=1,2,\ldots,m$. For each
	$l\in\mathbb{Z}$, $\mu_l=(\frac{1}{2}+l)^2\pi^2$ and there are $m-1$ linearly independent eigenfunctions of the form
	\begin{align*}
		\Psi^{1,l}(\mathbf{x})&=\bigl[\cos(\sqrt{\mu_l}x_1),-\cos(\sqrt{\mu_l}x_2),0,0,\dots,0\bigl]^\top,\\
		\Psi^{2,l}(\mathbf{x})&=\bigl[\cos(\sqrt{\mu_l}x_1),0,-\cos(\sqrt{\mu_l}x_3),0,\dots,0\bigl]^\top,\\
		&\vdots\\
		\Psi^{m-1,l}(\mathbf{x})&=\bigl[\cos(\sqrt{\mu_l}x_1),0,\dots,0,-\cos(\sqrt{\mu_l}x_m)\bigl]^\top.
	\end{align*}
Since  $\sigma(\tilde{A}(R))\cap(-1,1)=\{0\}$, Assumption \ref{nonzero eigenvalue}(i) holds naturally. To obtain the upper bound for $|\mathcal{Z}(R)|$, we notice that a star graph is an $S$‑atom satisfying $\mathbf{Supp}(R)=\{1,\dots,m\}$ and $\mathbf{Core}(R)=\{c\}$, which yields $|\mathcal{Z}(R)| \le 1$. 
\end{proof}
\begin{remark}\label{vertexnoise}
	 The above results illustrate that, for a chain graph, adding noise to a single edge
	is sufficient to guarantee the strong Feller property and irreducibility. In contrast, for a star
	graph, noise must be added to all edges except one in order to maintain these two properties.
	
	Our results for star graphs are consistent with the vertex-noise setting examined in  \cite[Example 4.8]{fkirine2025strong}. By an appropriate transformation induced by an admissible control operator, the linear SPDE driven by $\mathbb{R}^n$-valued vertex-noise  can be reformulated in an abstract framework, where the driving noise takes values in a negative Sobolev space (see \cite[Eq. (4.14)]{fkirine2025strong}).
\end{remark}

 It is well-known that a sufficient condition for the uniqueness of an invariant measure is that the semigroup possesses both the strong Feller property and irreducibility \cite[Proposition 11.13 and Theorem 11.14]{da2014stochastic}. Below we present the existence, uniqueness and exponential ergodicity of an invariant measure for the equation \eqref{SEE}.
\begin{theorem}\label{invariantmeasureth}
	Assume that the nonlinear mapping $B$ satisfies $K < \mu_1$, where $K$ is given in Assumption \ref{b1} and $\mu_1$ is presented in \eqref{eigenvalue}. Then there exists a unique invariant measure for $(\mathcal{S}_t)_{t\ge0}$, where $(\mathcal{S}_t)_{t\ge0}$ is the Markov semigroup of \eqref{SEE}. Moreover, this invariant measure is exponentially ergodic.
\end{theorem}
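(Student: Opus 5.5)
Our plan is to follow the standard dissipativity route of \cite{da1996ergodicity}. We first get existence of an invariant measure from uniform-in-time moment bounds together with compactness, then exponential ergodicity from a contraction estimate on two solutions driven by the same noise. Uniqueness will come out of the contraction directly, so we do not need the strong Feller/irreducibility machinery of Theorem \ref{main theorem}.

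A caveat must be handled first: $\mu_0=0$ is an eigenvalue with eigenfunction $\phi^{1,0}$, which is constant on $\Gamma$ by the Kirchhoff condition. The projection of $X$ onto $\phi^{1,0}$ is therefore not damped by $-A$. Moreover, if some edge is noisy, the projection of $QW$ onto $\phi^{1,0}$ is a nondegenerate Brownian motion, so a plain random walk in that direction would rule out an invariant measure. We therefore read the assumption $K<\mu_1$ as a one-sided dissipativity that also controls the constant mode, namely $\langle B(x)-B(y),x-y\rangle\le K\|x-y\|_H^2$ together with $\langle -A(x-y),x-y\rangle\le -\mu_1\|x-y\|_H^2$ for the relevant differences. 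If this literally fails on $\ker A$, we would need either $b_j'\le -\delta<0$ or the Galerkin part restricted to $(\ker A)^\perp$. We will state clearly which of these is used and write $\omega:=\mu_1-K>0$ for the effective dissipation rate.

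\textbf{Step 1 (contraction).} Let $X(t,x)$ and $X(t,y)$ be driven by the same $W$. The difference $D(t)=X(t,x)-X(t,y)$ then solves a random PDE with no noise term. Formally,
\[
\tfrac{d}{dt}\|D\|_H^2=2\langle -AD,D\rangle+2\langle B(X(t,x))-B(X(t,y)),D\rangle\le -2\omega\|D\|_H^2 .
\]
To make this rigorous we pass through the Galerkin approximation, or through the mild formulation with the Yosida approximation $A_n$. This gives $\|X(t,x)-X(t,y)\|_H\le e^{-\omega t}\|x-y\|_H$ pathwise.

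\textbf{Step 2 (moment bounds and existence).} Split $X=Z+Y$, where $Z(t)=\int_0^tP_{t-s}Q\,dW(s)$ is the stochastic convolution and $Y$ solves the random PDE $Y'=-AY+B(Y+Z)$. An energy estimate gives
\[
\tfrac{d}{dt}\|Y\|_H^2\le-2\omega\|Y\|_H^2+C\bigl(1+\|Z\|_H^2\bigr),
\]
and hence $\sup_t\mathbb E\|X(t,x)\|_H^2\le C(1+\|x\|_H^2)$. This uses $\sup_t\mathbb E\|Z(t)\|^2<\infty$ on the damped modes. For tightness we use a long-time bound in $\mathbb H^\alpha$ with $0<\alpha<1/2$. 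The white-noise convolution lies in $\mathbb H^\alpha$ because $\sum_k\mu_k^{\alpha-1}<\infty$ by $\mu_k\sim k^2$ from \eqref{eigenvalue}, and the drift term is smoothed by $P_{t-s}$ via $\|A^{\alpha/2}P_t\|\le Ct^{-\alpha/2}$. Since $\mathbb H^\alpha$ embeds compactly in $H$, Krylov--Bogoliubov applied to the time averages of the law of $X(t,0)$ yields an invariant measure $\mu$ with finite second moment.

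\textbf{Step 3 (uniqueness and exponential ergodicity).} Take $d$ to be the Wasserstein-1 distance, or the dual bounded-Lipschitz metric. Step 1 gives
\[
d\bigl(\mathcal S_t(x,\cdot),\mu\bigr)\le\int_H\mathbb E\|X(t,x)-X(t,y)\|_H\,\mu(dy)\le e^{-\omega t}\Bigl(\|x\|_H+\int_H\|y\|_H\,\mu(dy)\Bigr).
\]
This has the required form $\mathcal K(x)e^{-\omega t}$. Uniqueness follows because any second invariant measure is attracted to $\mu$ by the same estimate. The main obstacle is the degenerate zero mode described above, namely obtaining uniform dissipation on the whole of $H$. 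The rest of the argument is routine.
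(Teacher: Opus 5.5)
Your route is the paper's route. Uniqueness and exponential ergodicity come from a synchronous-coupling contraction (the paper's \eqref{attractive}). Existence comes from the splitting $X=W_A+(X-W_A)$, an energy estimate, a uniform $\mathbb{H}^{s}$ bound with $s<\tfrac12$ (the paper's \eqref{spatial regularity}), compact embedding and Krylov--Bogoliubov.

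The zero-mode caveat you raise is a genuine gap, and the paper's own proof has the same one. In the proof of Proposition \ref{im}, the paper uses $\langle -Au,u\rangle=-\sum_{k\ge1}\mu_ku_k^2\le-\mu_1\|u\|_H^2$ both for the moment bound and for \eqref{attractive}. This silently drops $u_0=\langle u,\phi^{1,0}\rangle$. The paper also needs $\sup_t\mathbb{E}\|W_A(t)\|_H^2<\infty$, which Lemma \ref{stochasticconvolution} does not supply: its computation likewise omits $k=0$, and the $\phi^{1,0}$-component of $W_A(t)$ has variance $t|\mathcal{Y}|/m$.

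Under the hypothesis as literally stated, the theorem is false. $B=0$ satisfies Assumption \ref{b1} with any $K<\mu_1$. Since $P_t^*\phi^{1,0}=\phi^{1,0}$, one has $\langle v(t),\phi^{1,0}\rangle=\langle X_0,\phi^{1,0}\rangle+\sqrt{|\mathcal{Y}|/m}\,\beta(t)$, with $\beta$ a real Brownian motion. So there is no invariant probability measure when $\mathcal{Y}\neq\varnothing$, and every $\delta_{c\phi^{1,0}}$, $c\in\mathbb{R}$, is invariant when $\mathcal{Y}=\varnothing$. No argument can close without strengthening the hypothesis. Your proposal leaves that choice open, so as written it proves the theorem only under an unspecified extra assumption.

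Of your two repairs, only the first works.

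Restricting to $(\ker A)^\perp$ is not available for \eqref{SEE}. That subspace is not invariant: the noise has the $\phi^{1,0}$-component above, and $B(X)$ generically has nonzero mean. Moreover, the projected process is in general not Markov when $B$ is nonlinear.

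If instead you assume $b_j'\le-\delta<0$, Step 1 goes through with $\omega=\delta$ using only $\langle -AD,D\rangle\le0$, so $K<\mu_1$ plays no role. Step 2, however, must be changed. With $Z(t)=\int_0^tP_{t-s}Q\,dW(s)$ one has $\mathbb{E}\|Z(t)\|_H^2\ge t|\mathcal{Y}|/m$. Your inequality $\tfrac{d}{dt}\|Y\|_H^2\le-2\omega\|Y\|_H^2+C(1+\|Z\|_H^2)$ then gives no uniform bound. Bounding $Z$ ``on the damped modes'' is not enough, because the energy estimate involves the full $H$-norm.

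The fix is as follows:
\begin{itemize}
\item Set $\tilde B:=B+\delta I$; its components are nonincreasing.
\item Use the damped convolution $Z(t)=\int_0^te^{-\delta(t-s)}P_{t-s}Q\,dW(s)$, for which $\sup_t\mathbb{E}\|Z(t)\|_H^2\le\tfrac12\sum_{k\ge0}(\mu_k+\delta)^{-1}<\infty$.
\item Estimate $Y=X-Z$, which solves $Y'=-(A+\delta)Y+\tilde B(Y+Z)$. This gives $\tfrac12\tfrac{d}{dt}\|Y\|_H^2\le-\delta\|Y\|_H^2+\|\tilde B(Z)\|_H\|Y\|_H$, hence a uniform $H$-moment bound.
\end{itemize}
The $\mathbb{H}^{s}$ seminorm does not see the constant mode, so tightness must use it together with this $H$-moment bound. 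With these changes, your Steps 2 and 3 are correct.
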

\begin{remark}
		\begin{enumerate}
\item[(i)]
	For stochastic systems in infinite-dimensional spaces, explicit expressions for invariant measures are generally difficult to obtain. However, in certain special cases, such expressions are available. For example, in the gradient case $B(u)=\nabla F(u)$ with some dissipative function $F:(\ker(A))^\perp\to\mathbb{R}$, the invariant measure admits the explicit representation of Gibbs type:
$	\pi(du)=\frac{1}{Z}e^{2F(u)}\pi_0(du)$,
where Gaussian measure $\pi_0=\mathscr{N}(0,\frac{1}{2}(A|_{(\ker(A))^\perp})^{-1})$ and  $Z=\int_{(\ker(A))^\perp}e^{2F(u)}\pi_0(du)$.

		\item[(ii)]  It is known that the strong Feller property may fail for SPDEs driven by a degenerate noise on Euclidean domains, which motivates the introduction of the asymptotically strong Feller property \cite{hairer2006ergodicity}. As a weaker alternative to the strong Feller property, this notion, when combined with suitable irreducibility, still yields uniqueness of invariant measure. Thus it is worth investigating whether SPDEs on graphs driven by such degenerate edge-wise noise still satisfy the asymptotic strong Feller property.
		\item[(iii)] We remark that for \eqref{eq:parabolic} with a superlinear dissipative nonlinearity, such as the stochastic Allen--Cahn equation on graph (i.e. $b_j(x)=x-x^3,\,x\in\mathbb{R}$), numerical experiments shown in Appendix \ref{nonlip} suggest that the corresponding Markov semigroup still possesses the strong Feller property, irreducibility, and the exponential ergodicity of a unique invariant measure. Nevertheless, standard tools like the Girsanov theorem are not directly applicable in this superlinear growth setting. A theoretical analysis of these cases will be reported in the future work.
	\end{enumerate}
\end{remark}
\section{Numerical verification}\label{numerical}
In this section, we provide numerical experiments on some concrete trees to verify our theoretical findings, where the solution is discretized by a full discretization whose spatial direction is the spectral Galerkin method and temporal direction is the accelerated exponential Euler method. 
\subsection{Chain graph}
We consider chain graph $L$ in Fig.~\ref{chain_graph} with $m=4$. Then $\sigma(\tilde{A}(L))\cap(-1,1)=\{\cos(\frac{\pi}{4}),\cos(\frac{\pi}{2}),\cos(\frac{3\pi}{4})\}=\{\frac{\sqrt{2}}{2},0,-\frac{\sqrt{2}}{2}\}$ and the corresponding eigenfunctions are denoted by $\{\{\Psi^{1,l}\},\{\Psi^{2,l}\},\{\Psi^{3,l}\}:l=1,2,\ldots,N\}$. Figs.~\ref{SF_chain} and \ref{IR_chain_nonlinear} provide numerical verification of Corollary  \ref{bound_chain}. The parameters are set as: dimension for spatial Galerkin method $N=2^6$, the time stepsize $\tau=2^{-5}$, the terminal time $T=2^{-1}$ and the number of trajectory $M_{traj}=500$.

\textbf{Verification of strong Feller property}. We set the  nonlinearity $b_j(x_j)=Q_j\sin(x_j),\, j=1,2,3,4$, with $Q_j$ defined in \eqref{Q}, which satisfies Assumption \ref{nonzero eigenvalue}(ii). We take $X_0=0$, $\epsilon \in \{ 10^{-\tfrac{4k}{7}} : k = 0,1,\dots,7\}$, $\mathcal{E}=\epsilon\sum_{l=1}^{N}\Psi^{1,l}$ and sign function $\phi(h)=\text{sgn}(\langle h,\Psi^{1,1}\rangle)\in\mathcal{B}_b(H)$, where $\Psi^{1,1}$ denotes the first eigenfunction of family $\{\Psi^{1,l}\}$. We use the Monte Carlo method to compute the difference
\begin{flalign}\label{nonlineardiffernece}
|\mathcal{S}_T\phi(X_0+\mathcal{E})-\mathcal{S}_T\phi(X_0)\big|&\approx\frac{1}{M_{traj}}\sum_{m=1}^{M_{traj}}|\phi\bigl(X^{\tau,N}(T,X_0+\mathcal{E})\bigl)\big|,
\end{flalign} where $X^{\tau,N}(T,X_0+\mathcal{E})$ denotes the numerical solution to nonlinear equation \eqref{SEE} at time $T$ with initial value $X_0+\mathcal{E}$, respectively. In Fig.~\ref{SF_chain}, all lines except the blue one decay to $0$ as $\epsilon\to0$. This indicates that $\mathcal{S}_T$ is strong Feller, provided noise acts on more than one edge, which is consistent with Corollary \ref{bound_chain}.
\begin{figure}[htbp]
	\centering
		\includegraphics[width=0.4\textwidth]{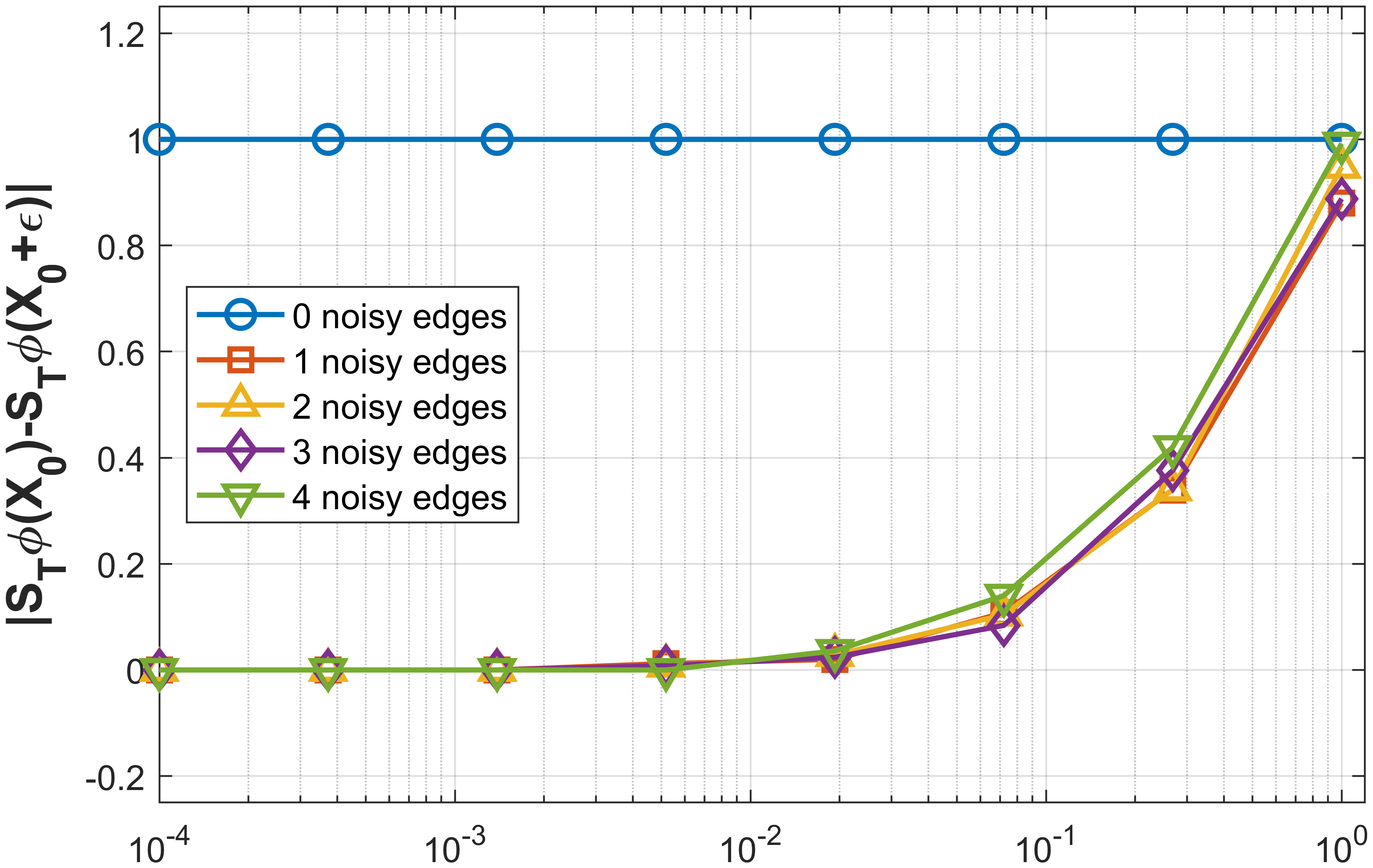}
	\caption{Strong Feller verification for $\mathcal{S}_T$ on chain graph with $4$ edges. $x$ axis: $\epsilon$, $y$ axis: the difference of \eqref{nonlineardiffernece}.  $N=2^6$, $\tau=2^{-5}$, $T=2^{-1}$, $M_{traj}=500$, $\epsilon \in \{ 10^{-\tfrac{4k}{7}} : k = 0,1,\dots,7\}$.}
	\label{SF_chain}
\end{figure}

	\textbf{Verification of irreducibility}. We set the nonlinearity $b_j(x_j)=Q_j\sin(x_j),\, j=1,2,3,4$, where $Q_j$ is defined in \eqref{Q}, which satisfies Assumption \ref{nonzero eigenvalue}(ii). We estimate reachability probabilities for every eigen-subspace via a counting procedure:
	\begin{flalign}\label{irreducible_nonlinear}
		\mathbb{P}\bigl( X(T,X_0)\in\text{span}(\Psi^{i,l})\bigl)\approx\frac{1}{M_{traj}}\sum_{m=1}^{M_{traj}}\mathbf{1}_{\{X(T,X_0)\in\text{span}(\Psi^{i,l})\}},\, i=1,2,3,\, l=1,2,\ldots,N.
	\end{flalign}
	 We set the initial value $X_0=0$. Result for $\mathcal{S}_T$ is shown in Fig.~\ref{IR_chain_nonlinear}. Columns 1-3 present the eigen‑directions of $\sigma_2$, and column 4 presents those of $\sigma_1$ (recall $\phi^{1,k}$ in Lemma \ref{operatorproperty}). We observe that the  probabilities across all eigen‑directions are positive, except for the zero noisy edge case (the blue line). This demonstrates that $(\mathcal{S}_t)_{t\ge0}$ is irreducible at time $T$, provided more than one edge is noisy. This verifies Corollary~\ref{bound_chain}.
\begin{figure}[htbp]
	\centering
	{
		\includegraphics[width=0.85\textwidth]{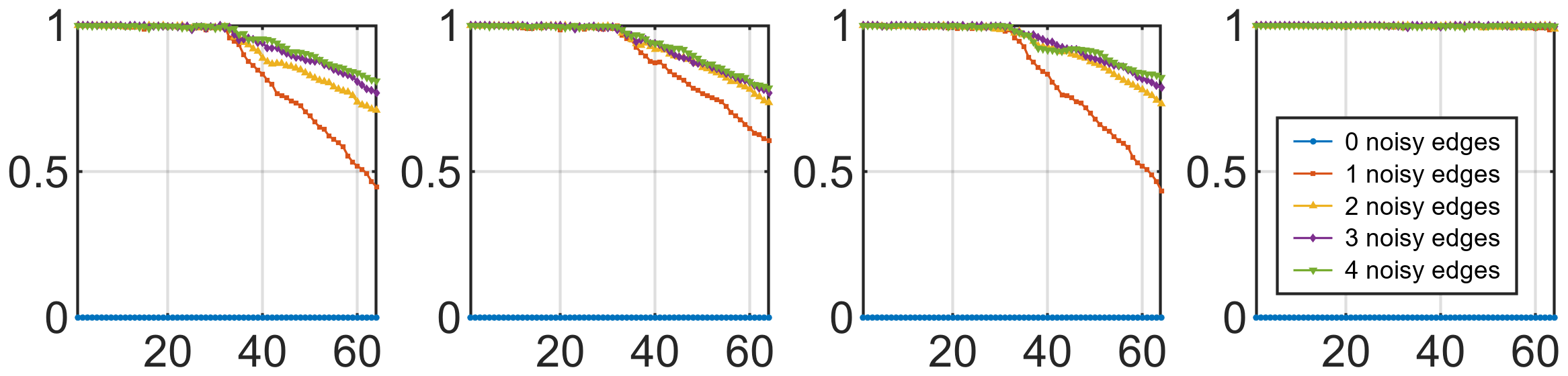}
	}
	\caption{Irreducibility verification for $\mathcal{S}_T$ on chain graph with 4 edges. Subfigures from left to right: $(\Psi^{1,l})_{1\le l\le N},\,(\Psi^{2,l})_{1\le l\le N},\,(\Psi^{3,l})_{1\le l\le N},\,(\phi^{1,k})_{0\le k\le N-1}$. $x$ axis: dimension $N$, $y$ axis: reachability probability. $N=2^6,\, \tau=2^{-5},\, T=2^{-1},\, M_{traj}=500$.}
	\label{IR_chain_nonlinear}
\end{figure}

\textbf{Verification of exponential ergodicity}. We take nonlinearity as $b_j(x_j)=\frac{x_j}{2\sqrt{1+x_j^2}}$ for $j=1,\dots,4$, which is globally Lipschitz continuous with constant $K=\frac{1}{2}<\mu_1=\bigl(\frac{\pi}{4}\bigl)^2$.  From Theorem~\ref{invariantmeasureth} we know that \eqref{SEE} on the chain graph with this nonlinearity admits a unique invariant measure that is exponentially ergodic. To numerically verify this property, we use Monte Carlo method to compute the empirical average \begin{flalign}\label{invariant}
	\mathbb{E}\bigl(\psi(X(t,X_0))\bigl)\approx\frac{1}{M_{traj}}\sum_{m=1}^{M_{traj}}\psi\left(X^{\tau,N}(t,X_0)\right),\, \psi\in\mathcal{B}_b(H)
\end{flalign}
at each time node. The parameters are set as: spatial Galerkin dimension $N=2^5$, the time stepsize $\tau=2^{-3}$, the terminal time $T=30$, the the number of trajectory $M_{traj}=1000$ and $\psi(h)=\sin(\|h\|_H),\,h\in H$.  
 Fig.~\ref{IM_chain} displays  the results for three different initial values and four different noise configurations. We observe that under the same noise configuration, all curves exponentially converge to the same limiting value, despite different initial values, which serves as numerical verification for the existence and exponential ergodicity of a unique invariant measure (see Theorem \ref{invariantmeasureth}). The initial values are chosen as: $X_0^{(1)}=0,\, X_0^{(2)}=\sum_{l=1}^{N}\phi^{1,l},\,X_0^{(3)}=\sum_{l=1}^{N}\xi^{1,l}\phi^{1,l}+\sum_{i=1}^{3}\sum_{l=1}^{N}\tilde{\xi}^{i,l}\Psi^{i,l}$, where $\{\xi^{1,l},\, l=1,2,\ldots,N\},\ \{\tilde{\xi}^{i,l},\, i=1,2,3,\, l=1,2,\ldots,N\}$ are all independently drawn from the standard normal distribution $\mathscr{N}(0,1)$.
\begin{figure}[htbp]
	\centering
	{
		\includegraphics[width=0.65\textwidth]{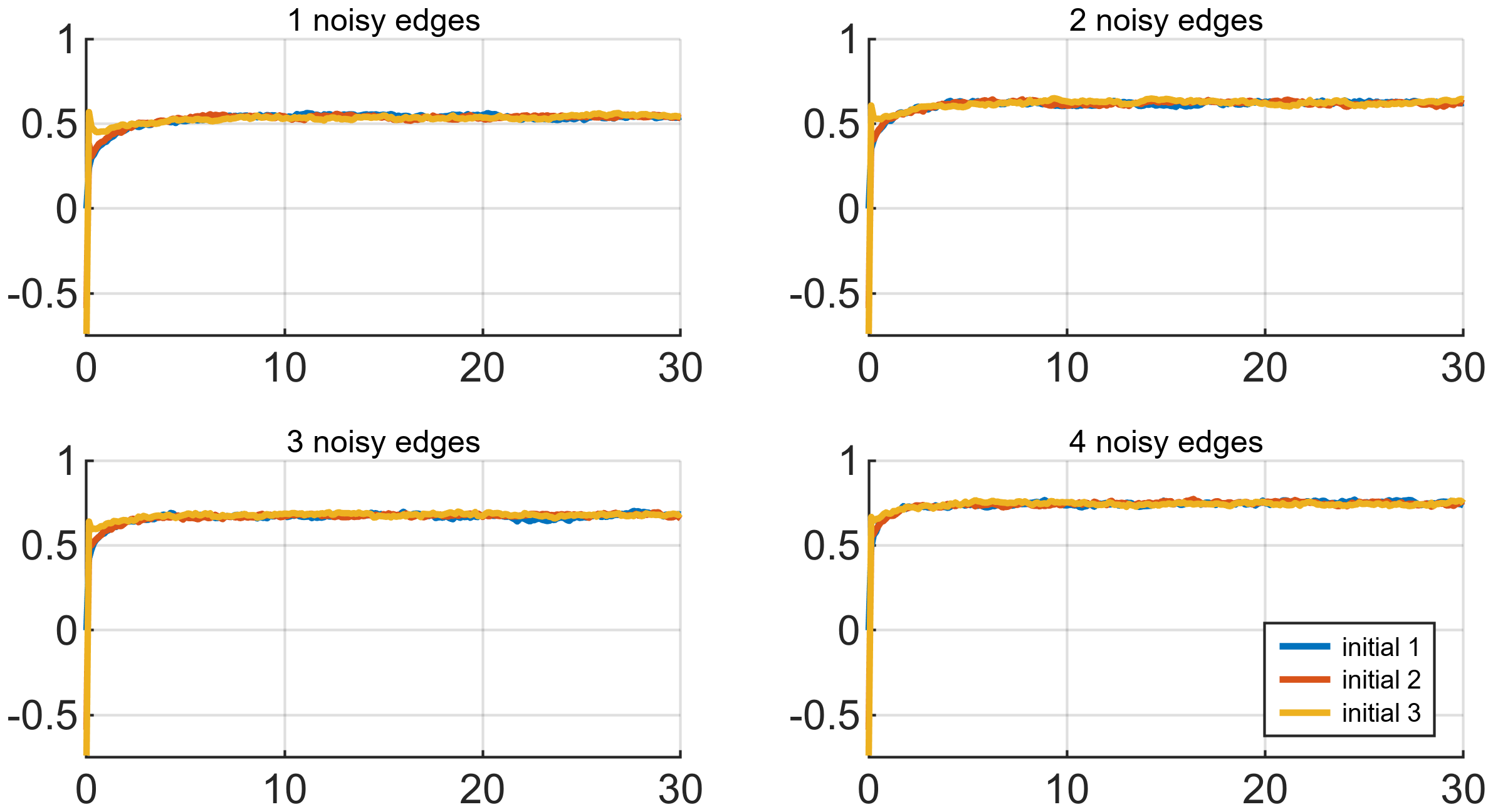}
	}
	\caption{Exponential ergodicity verification for \eqref{SEE} on chain graph with 4 edges. $x$ axis: time, $y$ axis: empirical average. $N=2^5,\, \tau=2^{-3},\, T=30,\, M_{traj}=1000.$ 3 initial values: $X_0^{(1)},\, X_0^{(2)},\, X_0^{(3)}$.}
	\label{IM_chain}
\end{figure}
\subsection{Star graph}

We consider star graph $R$ in Fig.~\ref{star_graph} with $m=4$. Then the eigenfunctions corresponding to $0\in\sigma(\tilde{A}(R))$ are denoted by $\{\{\Psi^{1,l}\},\{\Psi^{2,l}\},\{\Psi^{3,l}\}:l=1,2,\ldots,N\}$.
 Figs.~\ref{SF_star} and \ref{IR_star_nonlinear} provide a numerical verification of Corollary~\ref{bound_star}. The parameters are set the same as those of the chain graph.

\textbf{Verification of strong Feller property}. For the nonlinearity, we set $b_j(x_j)=Q_j\sin(x_j)$, $j=1,2,3,4$, where the coefficients $Q_j$ are defined in \eqref{Q}. Fig.~\ref{SF_star} shows that only for the cases of $3$ noisy edges (purple line) and $4$ noisy edges (green line), \eqref{nonlineardiffernece} decays to $0$ when $\epsilon$ converges to $0$. This means that the semigroup $(\mathcal{S}_t)_{t\ge0}$ on the star graph is strong Feller at time $T$ with at most one noise exception (see Corollary \ref{bound_star}).
\begin{figure}[htbp]
	\centering
	{
		\includegraphics[width=0.40\textwidth]{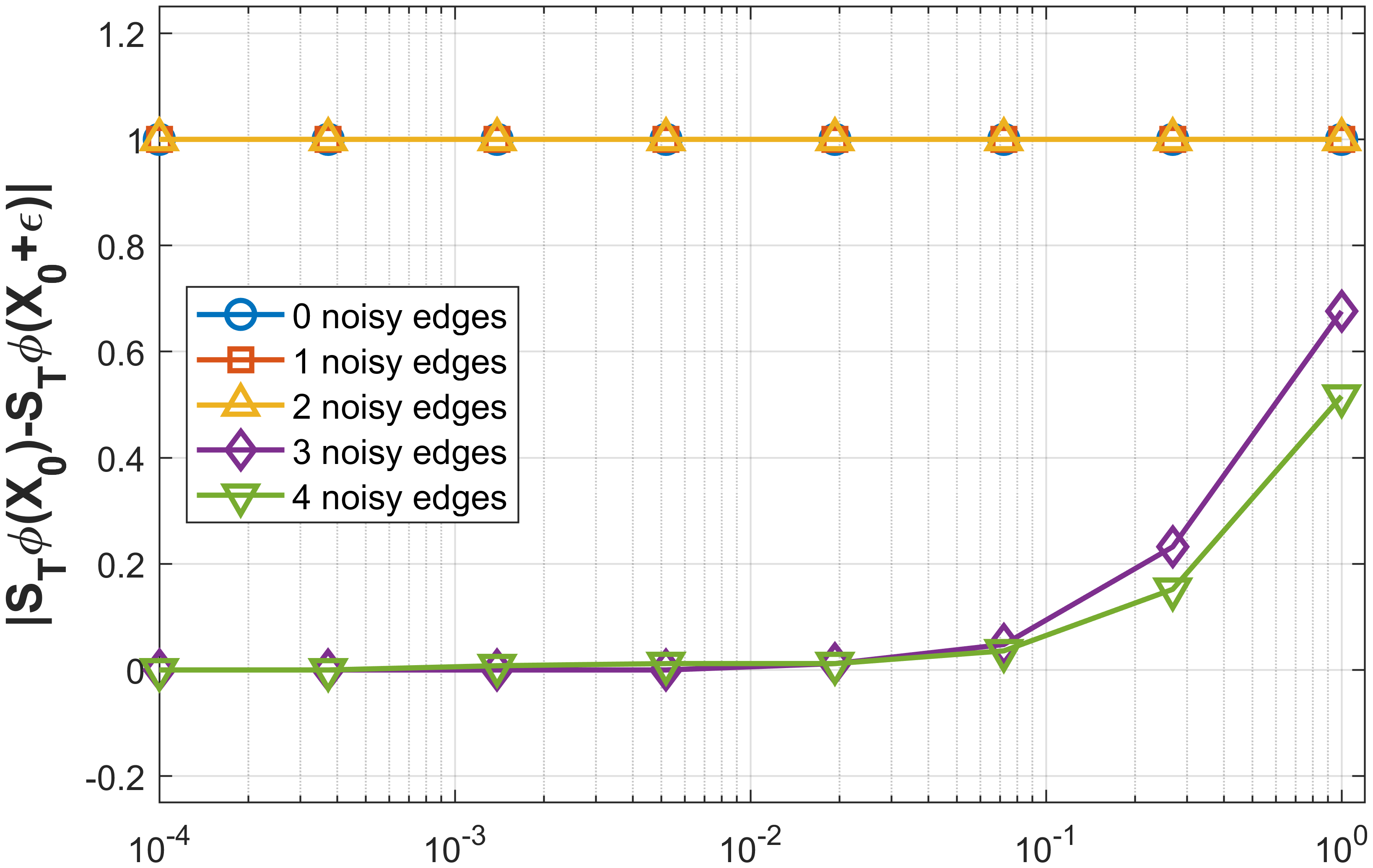}
	}
	\caption{Strong Feller verification for $\mathcal{S}_T$ on star graph with $4$ edges. $x$ axis: $\epsilon$, $y$ axis: the difference of \eqref{nonlineardiffernece}. $N=2^6,\, \tau=2^{-5},\, T=2^{-1},\,\epsilon \in \{ 10^{-\tfrac{4k}{7}} : k = 0,1,\dots,7\},\, M_{traj}=500$.}
	\label{SF_star}
\end{figure}

\textbf{Verification of irreducibility}.  We take  $b_j(x_j)=Q_j\sin(x_j),\, j=1,2,3,4$. In Fig.~\ref{IR_star_nonlinear}, columns 1-3 present the reachability probability \eqref{irreducible_nonlinear} for eigen‑directions corresponding to $-\mu_l=-(\tfrac{1}{2}+(l-1))^2\pi^2\in\sigma_2$, and column 4 presents those to $\sigma_1$. As shown in the first column, only the cases of 3 noisy edges (purple line) and 4 noisy edges (green line) make reachability probabilities of eigen-directions $\Psi^{1,l},\, l=1,2,\ldots,N$ positive. And the two cases also produce positive reachability probabilities for other eigen-directions, as seen in columns 2-4. Consequently, $(\mathcal{S}_t)_{t\ge0}$ is irreducible at time $T$, provided all edges are noisy with one edge exception, which verifies Corollary \ref{bound_star}.
\begin{figure}[htbp]
	\centering
	{
		\includegraphics[width=0.85\textwidth]{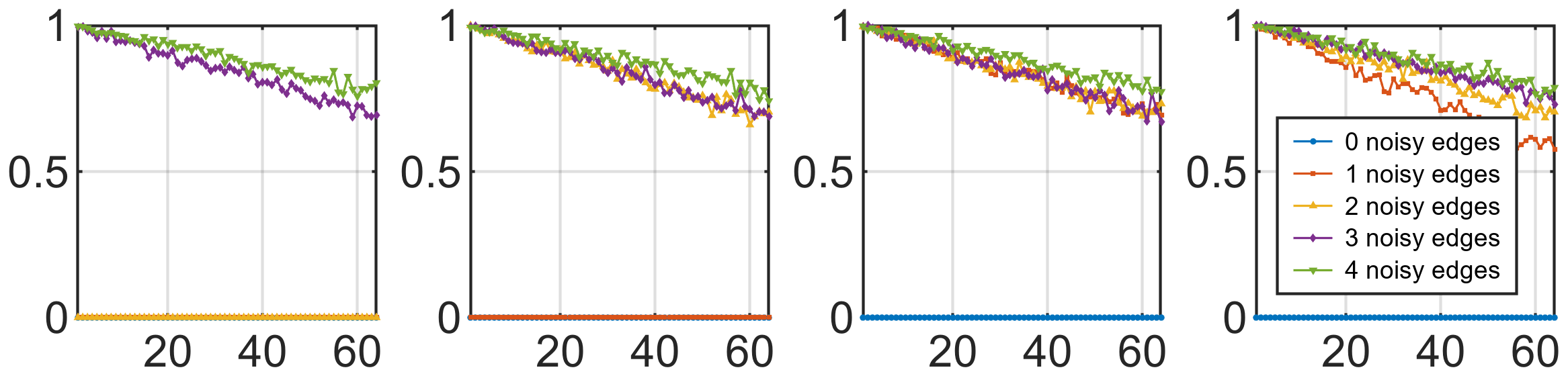}
	}
	\caption{Irreducibility verification for $\mathcal{S}_T$ on star graph with 4 edges.  Subfigures from left to right: $(\Psi^{1,l})_{1\le l\le N},\,(\Psi^{2,l})_{1\le l\le N},\,(\Psi^{3,l})_{1\le l\le N},\,(\phi^{1,k})_{0\le k\le N-1}.$ $x$ axis: dimension $N$, $y$ axis: reachability probability. $N=2^6,\, \tau=2^{-3},\, T=2^{-1},\, M_{traj}=500$.}
	\label{IR_star_nonlinear}
\end{figure}

\textbf{Verification of exponential ergodicity}. We take nonlinearity as $b_j(x_j)=\frac{x_j}{\sqrt{1+x_j^2}}$ for $j=1,\dots,4$, which is globally Lipschitz continuous with constant $K=1<\mu_1=\bigl(\frac{\pi}{2}\bigl)^2$. Theorem~\ref{invariantmeasureth} ensures that \eqref{SEE} on the star graph with this nonlinearity admits a unique invariant measure which is exponentially ergodic. The numerical experiment exhibits the similar result to the chain graph case, which is presented in Fig.~\ref{IM_star} in Appendix \ref{im_supp}.
\subsection{$S$-atom Tree}
Consider a more complex tree $T'$ in Fig.~\ref{T' graph} with 7 edges whose normalized adjacency matrix $\tilde{A}(T')\in\mathbb{R}^{8\times8}$ is given below:
\begin{figure}[htbp]
		\centering
		\begin{minipage}[c]{0.45\textwidth}
			\centering
			\vspace{0.8cm}
			\begin{tikzpicture}[scale=1.2,line width=0.8pt]
				\filldraw (0,0) circle (1.5pt) node[below right]{1};
				
				\draw (0,0) -- (-1,0) node[below] {3};
				\filldraw (-1,0) circle (1.5pt);
				
				\draw (0,0) -- (0,1) node[right] {2};
				\filldraw (0,1) circle (1.5pt);
				
				\draw (0,0) -- (0,-1) node[right] {4};
				\filldraw (0,-1) circle (1.5pt);
				
				\draw (0,0)--(1,0) node[below]{5};
				\filldraw (1,0) circle (1.5pt);
				
				\draw (1,0)--(2,0) node[right]{6};
				\filldraw (2,0) circle (1.5pt);
				
				\draw (2,0)--(2,1) node[right]{7};
				\filldraw (2,1) circle (1.5pt);
				
				\draw (2,0)--(2,-1) node[right]{8};
				\filldraw (2,-1) circle (1.5pt);
			\end{tikzpicture}
			\caption{$S$-atom $T'$}
			\label{T' graph}
		\end{minipage}
		\hspace{0.2cm}
		\begin{minipage}[b]{0.45\textwidth}
			\centering
			\[
			\tilde{A}(T') = 
			\begin{bmatrix}
				0    & \frac14           & \frac14  & \frac14 &\frac14&0&0 & 0 \\
				1    & 0           & 0   & 0 & 0&0&0 & 0 \\
				1    & 0           & 0   & 0 & 0&0&0 & 0 \\
				1    & 0           & 0   & 0 & 0&0&0 & 0 \\
				\frac12    & 0& 0        & 0 & 0&\frac12    & 0 &0\\
				0& 0& 0        & 0 & \frac13&  0  & \frac13 &\frac13\\
				0    & 0& 0        & 0 & 0&1    & 0 &0\\
				0   & 0& 0        & 0 & 0&1    & 0 &0\\
			\end{bmatrix}
			\]
		\end{minipage}
	\end{figure}
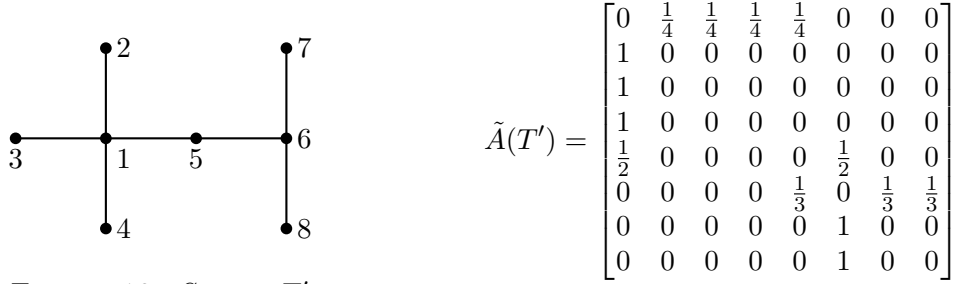
\begin{claim}
	For the SPDE \eqref{SEE} on the $S$-atom $T'$, under Assumptions \ref{b1} and \ref{nonzero eigenvalue}(ii), Theorem \ref{main theorem} holds with $|\mathcal{Z}(T')| \le3$.	
\end{claim}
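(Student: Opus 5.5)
The plan is to apply Theorem~\ref{main theorem} directly. This needs two things: a check that $T'$ satisfies Assumption~\ref{nonzero eigenvalue}(i), and the computation of $\mathbf{Supp}(T')$ and $\mathbf{Core}(T')$ from the null space of the adjacency matrix $A(T')$. Here $m=7$, so the bound reads $\min\{7-|\mathbf{Supp}(T')|+|\mathbf{Core}(T')|,\,6\}$. We therefore need to show $|\mathbf{Supp}(T')|-|\mathbf{Core}(T')|=4$.

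\textbf{Null space.} A vector $Y$ lies in $\mathcal N(T')$ iff, at every vertex, the sum of $Y$ over its neighbors vanishes. The five conditions are:
\begin{itemize}
\item the leaves $2,3,4$ force $Y(1)=0$;
\item the leaves $7,8$ force $Y(6)=0$;
\item vertex $5$ gives $Y(1)+Y(6)=0$, which is then automatic;
\item vertex $1$ gives $Y(2)+Y(3)+Y(4)+Y(5)=0$;
\item vertex $6$ gives $Y(5)+Y(7)+Y(8)=0$.
\end{itemize}
So $\mathcal N(T')$ is $4$-dimensional in the free variables $Y(2),Y(3),Y(4),Y(5),Y(7),Y(8)$, subject to the last two constraints. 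For instance $[0,-1,0,0,1,0,-1,0]^\top$ shows $Y(5)\neq 0$ is possible, and vectors like $[0,1,-1,0,0,0,0,0]^\top$ handle the leaves. Hence $\mathbf{Supp}(T')=\{2,3,4,5,7,8\}$ and $\mathbf{Core}(T')=N(\mathbf{Supp}(T'))=\{1,6\}$.

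Since $1$ and $6$ are not adjacent, there are no core–core edges, so $T'$ is indeed an $S$-atom. Its null decomposition is trivial: $\mathcal F_S(T')=\{T'\}$ and $\mathcal F_N(T')=\emptyset$. This gives $7-6+2=3<6$, i.e. $|\mathcal Z(T')|\le 3$.

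\textbf{Assumption~\ref{nonzero eigenvalue}(i).} Let $\lambda\in\sigma(\tilde A(T'))\cap(-1,1)$ with $\lambda\ne0$, and let $x$ be an eigenvector. The leaf equations give $x_2=x_3=x_4=x_1/\lambda$ and $x_7=x_8=x_6/\lambda$. The remaining equations are
\begin{align*}
4\lambda x_1 &= \tfrac{3}{\lambda}x_1 + x_5,\\
2\lambda x_5 &= x_1 + x_6,\\
3\lambda x_6 &= x_5 + \tfrac{2}{\lambda}x_6.
\end{align*}
Suppose $x_1=0$. The first equation gives $x_5=0$, the second then gives $x_6=0$, and the leaf relations make all leaves vanish, so $x=0$, a contradiction. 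Symmetrically, $x_6=0$ forces $x=0$. Hence $x_1,x_6\ne0$, and then all leaf entries $x_2,x_3,x_4,x_7,x_8$ are nonzero as well. The only entry that can vanish is $x_5$, and its neighbors $1$ and $6$ are nonzero. So the zero entries of every such eigenvector form an independent set, and Assumption~\ref{nonzero eigenvalue}(i) holds.

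The main obstacle is this eigenvector check. It is easy once the leaf substitution reduces it to the $3\times3$ system above, and no explicit eigenvalues are needed. With both points in hand, Theorem~\ref{main theorem} applies under Assumptions~\ref{b1} and~\ref{nonzero eigenvalue}(ii). Sharpness is inherited from the theorem; the bound is attained by taking as noise-free edges three edges determined by the null eigenfunctions, e.g. one of $e_{12},e_{13},e_{14}$ together with edges incident to the core.
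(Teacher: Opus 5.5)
Your proposal is correct and follows the paper's route: compute $\mathcal N(T')$ to get $\mathbf{Supp}(T')=\{2,3,4,5,7,8\}$ and $\mathbf{Core}(T')=\{1,6\}$, note that $1$ and $6$ are non-adjacent so $T'$ is an $S$-atom, verify Assumption~\ref{nonzero eigenvalue}(i), and read off $7-6+2=3$. The one real difference is the check of Assumption~\ref{nonzero eigenvalue}(i): the paper computes $\sigma(\tilde A(T'))=\{0,\pm1,\pm\tfrac{\sqrt{102}}{12}\}$ and explicit eigenvectors, whereas your leaf-elimination argument forces $x_1,x_6\neq0$ for every nonzero eigenvalue without finding it, which is cleaner and covers each whole eigenspace. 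Your closing sharpness aside, though, is loose: every edge of $T'$ is incident to the core, and a choice such as $\mathcal Z=\{e_{12},e_{67},e_{68}\}$ fails because of the null vector $[0,0,0,0,0,0,1,-1]^\top$.
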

\begin{proof}
	The spectrum of $\tilde{A}(T')$ is  $\sigma(\tilde{A}(T'))=\{0,\pm1,\pm\frac{\sqrt{102}}{12}\}$, where the eigenvalue $0$ has multiplicity $4$. The eigenvectors corresponding to $\frac{\sqrt{102}}{12}$ and $-\frac{\sqrt{102}}{12}$ are respectively
	\begin{flalign*}
		\bigl[-\tfrac{\sqrt{102}}{16},-\tfrac{3}{4},-\tfrac{3}{4},-\tfrac{3}{4},\tfrac{1}{8},\tfrac{\sqrt{102}}{12},1,1\bigl]^\top,\quad\bigl[\tfrac{\sqrt{102}}{16},-\tfrac{3}{4},-\tfrac{3}{4},-\tfrac{3}{4},\tfrac{1}{8},-\tfrac{\sqrt{102}}{12},1,1\bigl]^\top,
	\end{flalign*}
	which satisfy Assumption \ref{nonzero eigenvalue}(i). One can verify that the null space of $T'$ is given by
	\begin{flalign*}
		\mathcal{N}(T')=\text{span}\Bigl\{&[0,1,0,0,-1,0,0,1]^\top,[0,0,1,0,-1,0,0,1]^\top,\\
		&[0,0,0,1,-1,0,0,1]^\top,[0,0,0,0,0,0,1,-1]^\top\Bigl\}.
	\end{flalign*}
	Observe that $\mathbf{Supp}(T')=\{2,3,4,5,7,8\}$, $\mathbf{Core}(T')=\{1,6\}$, $\mathbf{Supp}(T')\cup\mathbf{Core}(T')=V(T')$ and the vertices $1$, $6$ are not adjacent. Hence $T'$ is an $S$‑atom. Then $|\mathcal{Z}(T')|\le 7-|\mathbf{Supp}(T')|+|\mathbf{Core}(T')|=7-6+2=3$.
\end{proof}
\begin{remark}
	Unlike the star graph and chain graph, the structure of a general $S$-atom lacks symmetry, and thus noise-free edges should be selected carefully. For $T'$ in Fig.~\ref{T' graph}, by the expression of null space $\mathcal{N}(T')$, the three noise‑free edges can be selected according to the following constraints: no more than one from the set $\{e_{12}, e_{13}, e_{14}\}$, no more than two from $\{e_{15}, e_{56}\}$, and no more than one from $\{e_{67}, e_{68}\}$.
\end{remark}
 In the numerical experiments, we examine five distinct noise configurations:
 $\mathcal{Z}=E(T')$ (blue);  $\mathcal{Z}=\{e_{67},e_{68}\}$ (red); $\mathcal{Z}=\{e_{12},e_{67}\}$ (yellow);  $\mathcal{Z}=\{e_{12},e_{15},e_{67}\}$ (purple); $\mathcal{Z}=\varnothing$ (green). Denote the eigenfunctions corresponding to the eigenvalues $\{-\frac{\sqrt{102}}{12},\frac{\sqrt{102}}{12}\} \text{ and }\{0\}$ of $\tilde{A}(T')$ by $$\{\{\Psi^{1,l}\},\{\Psi^{2,l}\}:l=1,2,\ldots,N\}\text{ and }\{\{\Psi^{3,l}\},\{\Psi^{4,l}\},\{\Psi^{5,l}\},\{\Psi^{6,l}\}:l=1,2,\ldots,N\}.$$
 
\textbf{Verification of strong Feller property}. For the nonlinearity $B$, we set $b_j(x_j)=Q_j\sin(x_j)$, $j=1,2,\ldots,7$. Let $X_0=0$, $\epsilon \in \{ 10^{-\tfrac{4k}{7}} : k = 0,1,\dots,7\}$,  $\mathcal{E}=\epsilon\sum_{l=1}^{N}\Psi^{6,l}$ and sign function $\phi(h)=\text{sgn}(\langle h,\Psi^{6,1}\rangle)\in\mathcal{B}_b(H)$, where $\Psi^{6,1}$ denotes the first eigenfunction of family $\{\Psi^{6,l}\}$. Fig.~\ref{SF_T} shows that the red and blue lines remain constant at $1$, whereas the other three lines decay to $0$. This indicates that the semigroup $(\mathcal{S}_t)_{t\ge0}$ on $T'$ is not strong Feller at time $T$ when $\mathcal{Z}=\{e_{67},e_{68}\}$ or $\mathcal{Z}=E(T')$, but is strong Feller when $\mathcal{Z}=\{e_{12},e_{67}\}$,  $\mathcal{Z}=\{e_{12},e_{15},e_{67}\}$ or $\mathcal{Z}=\varnothing$.
\begin{figure}[htbp]
	\centering
	{
		\includegraphics[width=0.40\textwidth]{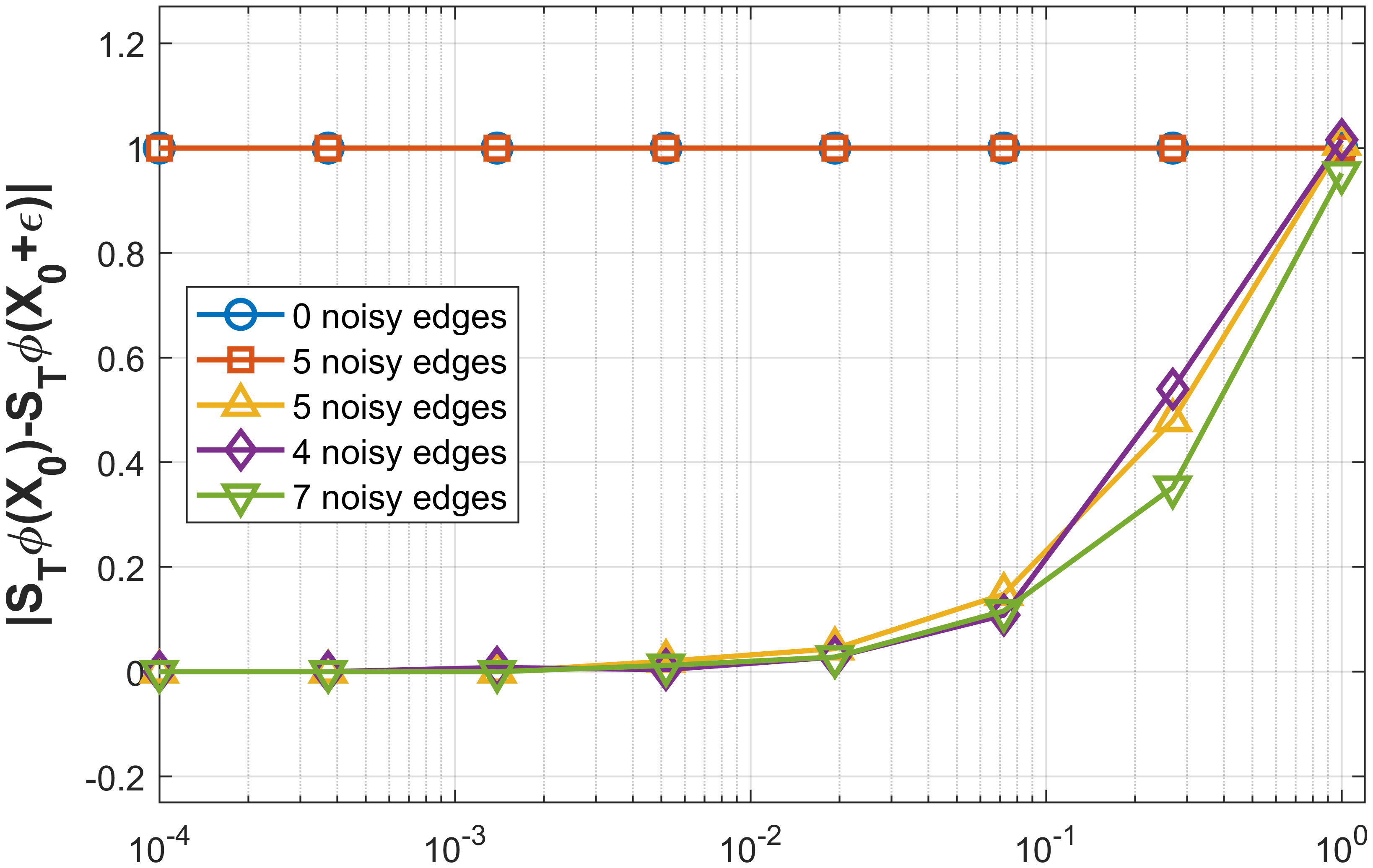}
	}
	\caption{Strong Feller verification for $\mathcal{S}_T$ on $S$-atom $T'$. $x$ axis: $\epsilon$, $y$ axis: the difference of \eqref{nonlineardiffernece}. $N=2^6,\, \tau=2^{-5},\, T=2^{-1},\, \epsilon \in \{ 10^{-\tfrac{4k}{7}} : k = 0,1,\dots,7\},\, M_{traj}=500$.}
	\label{SF_T}
\end{figure}

\textbf{Verification of irreducibility}. The nonlinearity is set as: $b_j(x_j)=Q_j\sin(x_j)$, $j=1,2,\ldots,7$. In
Fig.~\ref{IR_T_nonlinear}, the first six subfigures display six eigenfunctions corresponding to  $\sigma_2$ (the first two for $\frac{\sqrt{102}}{12} $ and $-\frac{\sqrt{102}}{12}$, the next four for $0$) and the last subfigure for eigenfunctions of $\sigma_1$. The red lines in the sixth subfigure of Fig.\ref{IR_T_nonlinear} as well as all the blue lines show zero reachability probability. Consequently, for $\mathcal{Z}=\{e_{67},e_{68}\}$ and $\mathcal{Z}=E(T')$, $(\mathcal{S}_t)_{t\ge0}$ is not irreducible at time $T$. But the semigroup is irreducible at $T$ for either the cases  $\mathcal{Z}=\{e_{12},e_{67}\}$, $\mathcal{Z}=\{e_{12},e_{15},e_{67}\}$ or $\mathcal{Z}=\varnothing$. 

\begin{figure}[htbp]
	\centering
	{
		\includegraphics[width=0.75\textwidth]{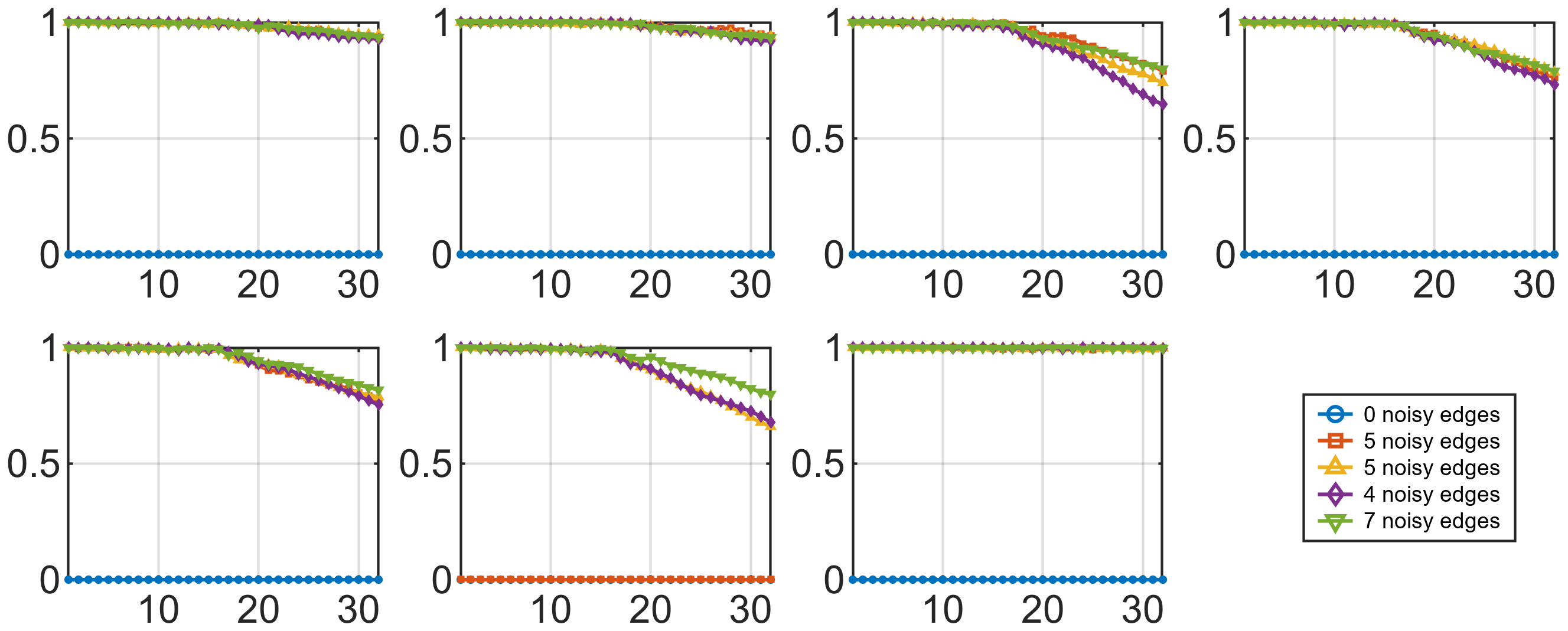}
	}
	\caption{Irreducibility verification for  $\mathcal{S}_T$ on $S$-atom  $T'$.  Seven subfigures from left to right: $(\Psi^{1,l})_{1\le l\le N},\ldots,(\Psi^{6,l})_{1\le l\le N},(\phi^{1,k})_{0\le k\le N-1}$. $x$ axis: dimension $N$, $y$ axis: reachability probability. $N=2^5,\, \tau=2^{-4},\, T=2^{-1},\,M_{traj}=500$.}
	\label{IR_T_nonlinear}
\end{figure}
\textbf{Verification of exponential ergodicity}. We take nonlinearity as $b_j(x_j)=\frac{x_j}{5\sqrt{1+x_j^2}}$ for $j=1,\dots,7$, which is dissipative and thus the corresponding equation \eqref{SEE} has a unique invariant measure and exhibits exponential ergodicity. This can be verified numerically; see Fig.~\ref{IM_T} in Appendix \ref{im_supp}.
\begin{remark}
 For general trees, the graph structures can be highly complex, making it difficult to compute the upper bound $|\mathcal{Z}(\Gamma)|$ directly from the sets $\textbf{Core}(\Gamma)$ and $\textbf{Supp}(\Gamma)$. To deal with this case, we use the following method based on the matching number $\nu(\Gamma)$ to compute this bound.
    From \cite[Corollary 4.15]{jaume2018null}, we have \[
		|\textbf{Core}(\Gamma)|  = \nu(\Gamma)- \frac{|V(\mathcal{F}_N(\Gamma))|}{2} \text{ and }  |\textbf{Supp}(\Gamma)|=\alpha(\Gamma)- \frac{|V(\mathcal{F}_N(\Gamma))|}{2},
		\] where  $\alpha(\Gamma)$ is the independence number. Due to the fact that $\alpha(\Gamma)+\nu(\Gamma)=n=m+1$ for trees, we derive that $$ m-|\textbf{Supp}(\Gamma)|+|\textbf{Core}(\Gamma)|=m-\alpha(\Gamma)+\nu(\Gamma)=2\nu(\Gamma)-1.$$ Thus the upper bound can be given by the matching number $$\mathcal{Z}(\Gamma)\le\min\{2\nu(\Gamma)-1,m-1\}.$$
		%
			%
		Here, the matching number $\nu(\Gamma)$ can be calculated via the classical Hopcroft--Karp--Karzanov algorithm \cite{hopcroft1973n}.
	\end{remark}
\section{Proofs of main results}\label{proof}
In this section, we present the proofs of Theorems \ref{main theorem} and \ref{invariantmeasureth}.
\subsection{Proofs of Theorem \ref{main theorem} and Theorem \ref{invariantmeasureth}} \label{proof of main}
\begin{proof}[Proof of Theorem \ref{main theorem}]
(\uppercase\expandafter{\romannumeral1}) The proof of strong Feller property is divided into two parts, $\mathcal{Z}=\varnothing$ and $\mathcal{Z}\neq\varnothing$.\\
\textbf{(\romannumeral1)} $\bm{\mathcal{Z} = \varnothing.}$
We first prove the strong Feller property of the semigroup $(\mathcal{R}_t)_{t\ge0}$ corresponding to  the linear equation \eqref{linear SEE}. The solution $v(t,X_0)$ is, for every $t>0$, a Gaussian random variable with mean value $P_t X_0$ and covariance operator
\begin{equation}\label{eq:Qt}
	Q_t = \int_0^t P_s P_s^* \, ds .
\end{equation}
For arbitrary element $v = \sum_{k=0}^{\infty} v_k \phi_k \in H,\, v_k\in\mathbb{R}$, we obtain
\begin{align*}
	Q_t v &= \int_0^t P_s P_s^* \Bigl( \sum_{k=0}^{\infty} v_k \phi_k \Bigr) ds 
	= \int_0^t \sum_{k=0}^{\infty} e^{-2\mu_k s} v_k \phi_k \, ds \\
	&= t v_0 + \sum_{k=1}^{\infty} \tfrac{1 - e^{-2\mu_k t}}{2\mu_k} v_k \phi_k 
	=: \sum_{k=0}^{\infty} \rho_k^2 v_k \phi_k ,
\end{align*}
where $\rho_0 = \sqrt{t}$ and $\rho_k = \sqrt{\frac{1 - e^{-2\mu_k t}}{2\mu_k}}$ for $k \ge 1$. Then $Q_t^\frac{1}{2}v=\sum_{k=0}^\infty\rho_kv_k\phi_k$. According to \cite[Theorem 7.2.1]{da1996ergodicity}, $\mathcal{R}_t$ is a strong Feller semigroup at time $t$ if and only if $\text{Im}(P_t) \subset \text{Im}(Q_t^{\frac{1}{2}})$. Note that for any $u = \sum_{k=0}^\infty u_k \phi_k \in H$, we have $P_t u = \sum_{k=0}^\infty e^{-\mu_k t} u_k \phi_k$. By defining
			\begin{flalign*}
				v_k=\tfrac{e^{-\mu_kt}}{\rho_k}u_k=
				\begin{cases}
					\tfrac{1}{\sqrt{t}}u_k,\ &k=0,\\
					e^{-\mu_kt}\sqrt{\tfrac{2\mu_k}{1-e^{-2\mu_kt}}}u_k,\,&k\geq1,
				\end{cases}
			\end{flalign*} 
		we have that $v=\sum_{k=0}^\infty v_k\phi_k$ satisfies $P_t u=Q_t^\frac{1}{2}v$ and $\|v\|_H^2=\sum_{k=0}^\infty v_k^2\le\tfrac{1}{t}\|u\|_H^2$ due to the fact that $\frac{xe^{-x}}{1-e^{-x}},\, x>0$ is uniformly bounded. This shows that $\text{Im}(P_t) \subset \text{Im}(Q_t^{\frac{1}{2}})$.
 	
We now turn to the strong Feller property of the semigroup $ (\mathcal{S}_t)_{t\ge0}$ corresponding to the nonlinear equation \eqref{SEE}. To this end, define the operator $\gamma(t) = Q_t^{-\frac{1}{2}} P_t$. Notice that for any $t>0$, $\sum_{k=0}^\infty\langle Q_t\phi_k,\phi_k\rangle=t+\sum_{k=1}^\infty\frac{1-e^{-2\mu_k t}}{2\mu_k}<\infty$, thus the operator $Q_t$ is of trace class. According to \cite[Theorem 7.2.4]{da1996ergodicity}, it suffices to show that $\|\gamma(\cdot)\|_{\mathcal{L}(H)} \in L^1_{\text{loc}}(0,+\infty)$. Indeed, since $\gamma(t)\phi_k = e^{-\mu_k t}\rho_k^{-1}\phi_k$ for all $k\ge0$, we have
\[
\|\gamma(t)\|_{\mathcal{L}(H)} \le \sup_{k\ge0}\tfrac{e^{-\mu_k t}}{\rho_k} \le \tfrac{1}{\sqrt{t}},
\]
and the function $t\mapsto 1/\sqrt{t}$ belongs to $L^1_{\mathrm{loc}}(0,+\infty)$. Therefore, $(\mathcal{S}_t)_{t\ge0}$ is strong Feller.

\textbf{(\romannumeral2)}$\bm{\mathcal{Z} \neq \varnothing.}$ We split the proof into five steps.

\textbf{Step 1: Equivalent description for the strong Feller property of $\mathcal{R}_t$.} 
	Define \begin{flalign*}
		Q_t=\int_0^tP_sQQ^*P_s^*ds.
	\end{flalign*}
Notice the equivalent relation:
	\begin{flalign*}
	 \text{Im}(P_t)\subset \text{Im}(Q_t^\frac{1}{2})\text{ if and only if } \text{ker}(P_t)\supset \text{ker}(Q_t^\frac{1}{2}).
	\end{flalign*}
	Since for any $h \in H$, the spectral expansion yields	$P_t h = \sum_{k=0}^{\infty} e^{-\mu_k t} \langle h, \phi_k \rangle \phi_k.$ If $P_t h = 0$, then $e^{-\mu_k t} \langle h, \phi_k \rangle = 0$ for each $k$. Since $e^{-\mu_k t} > 0$, it follows that $\langle h, \phi_k \rangle = 0$ for all $k$. Hence $h = 0$, and therefore $\ker(P_t) = \{0\}$. By \cite[Theorem 7.2.1]{da1996ergodicity}, we obtain that $\text{ker}(Q_t^\frac{1}{2})=\{0\}$ is a sufficient and necessary condition for the strong Feller property of the semigroup $(\mathcal{R}_t)_{t\ge0}$. For any $f=\sum_{k=0}^{\infty}f_k\phi_k\in H,\, f_k\in\mathbb{R}$,
	\begin{flalign*}
		\|Q_t^\frac{1}{2}f\|_H^2=\langle Q_tf,f\rangle=\int_0^t\langle P_sQQ^*P_s^*f,f\rangle ds=\int_0^t\|Q^*P_s^*f\|_{H}^2ds=\int_{0}^{t}\|QP_sf\|_{H}^2ds.
	\end{flalign*}
	Then by the continuity of $(P_s)_{s\ge0}$, 
	\begin{flalign*}
		\text{ker}(Q_t^\frac{1}{2})=\{f\in H:QP_sf=0,\,\forall s\in[0,t]\}.
	\end{flalign*}
	Since $QP_sf=\sum_{k=0}^{\infty}e^{-\mu_ks}f_k(Q\phi_k)$, we deduce \begin{flalign}
		\text{ker}(Q_t^\frac{1}{2})=\overline{\text{span}\{\phi_k:Q\phi_k=0,\,k\ge0\}}.
	\end{flalign}

For all eigenfunctions $\phi_k$ of $(\Delta,\mathcal{D}(\Delta))$, if \begin{align*}
	\exists\, \phi_k\ne0 \text{ such that } Q\phi_k=0,\;\; \text{i.e.},\  \phi_k|_{\mathcal{Z}}\ne0 \text{ and } \phi_k|_{\mathcal{Y}}=0,
\end{align*}then $\ker(Q_t^\frac{1}{2})\neq\{0\}$ and $(\mathcal{R}_t)_{t\ge0}$ is not strong Feller. And 
if
\begin{align}\label{eigenfunction}
Q\phi_k\ne0 \text{ for all }\phi_k,
\end{align}  
then ker$(Q_t^\frac{1}{2})=\{0\}$ and $(\mathcal{R}_t)_{t\ge0}$ is a strong Feller semigroup.

\textbf{Step 2: Structure of eigenfunctions in sets $\sigma_1$ and $\sigma_2$.}
	In this step, we analyze the structure of eigenfunctions in sets $\sigma_1$ and $\sigma_2$, respectively, where $\sigma_1$ and $\sigma_2$ are given in Lemma \ref{operatorproperty}. The following claim shows that the eigenfunctions of $\sigma_1$ do not affect the strong Feller property, whose proof is postponed in Section \ref{Proof of Claim}.
	
	\textit{Claim 1:
		No eigenfunction corresponding to any eigenvalue $\lambda\in\sigma_1$ is identically zero on every edge.}
	
The key to establishing the strong Feller property therefore lies in the properties of eigenfunctions corresponding to eigenvalues in $\sigma_2$. For $-\mu_l\in\sigma_2,\,l\in\mathbb{Z}$, by Lemma \ref{operatorproperty},  
\begin{flalign*}
	\phi^{2,l}_{j}(x_j)=\tfrac{1}{\sin\sqrt{\mu_l}}\left(\phi^{2,l}_{j}(0)\sin(\sqrt{\mu_l}(1-x_j))+\phi^{2,l}_{j}(1)\sin(\sqrt{\mu_l}x_j)\right).
\end{flalign*}
For its derivative we have
	\begin{flalign*}
		(\phi^{2,l}_{j})'(x_j)&=\tfrac{\sqrt{\mu_l}}{\sin(\sqrt{\mu_l})}\left(-\phi_j^{2,l}(0)\cos\left(\sqrt{\mu_l}(1-x_j)\right)+\phi^{2,l}_{j}(1)\cos(\sqrt{\mu_l}x_j)\right),\\
		(\phi^{2,l}_{j})'(0)&=\tfrac{\sqrt{\mu_l}}{\sin(\sqrt{\mu_l})}\left(-\phi_j^{2,l}(0)\cos\left(\sqrt{\mu_l}\right)+\phi^{2,l}_{j}(1)\right),\\
		(\phi^{2,l}_{j})'(1)&=\tfrac{\sqrt{\mu_l}}{\sin(\sqrt{\mu_l})}\left(-\phi_j^l(0)+\phi^{2,l}_{j}(1)\cos(\sqrt{\mu_l})\right).
	\end{flalign*}
 We define the set of vectors $$\mathbf{U}:=\bigl\{U^l=[U^l(v_1),U^l(v_2),\dots,U^l(v_n)]^\top\in\mathbb{R}^n: U^l(v_i)=\phi^{2,l}(v_i),i=1,2,\dots,n,l\in\mathbb{Z}\bigl\},$$ where $U^l$ can be proved as an eigenvector of the matrix $\tilde{A}(\Gamma)$ corresponding to eigenvalue $\cos(\sqrt{\mu_l})$ for each $l\in\mathbb{Z}$. To illustrate this, consider a vertex $v\in V(\Gamma)$ and, for simplicity, suppose that $e_j(0)=v$ holds for every edge $ e_j\in E_v$, where $E_v$ denotes the set of edges incident to $v$. Then $U^l(v)=U^l(e_j(0))=\phi^{2,l}_j(0)$ and $U^l(e_j(1))=\phi^{2,l}_j(1)$ for each $e_j\in E_v$. By the Neumann--Kirchhoff condition \eqref{eq:kirchhoff} at vertex $v$, we have
	\begin{flalign*}
		\sum_{e_j\in E_v}(\phi^{2,l}_{j})'(0)=\tfrac{\sqrt{\mu_l}}{\sin(\sqrt{\mu_l})}\sum_{e_j\in E_v}\left(-\phi^{2,l}_j(0)\cos\left(\sqrt{\mu_l}\right)+\phi^{2,l}_{j}(1)\right)=0,
	\end{flalign*}
which implies
\begin{flalign*}
		\sum_{e_j\in E_v}\phi^{2,l}_{j}(1)=\sum_{e_j\in E_v}\cos(\sqrt{\mu_l})\phi^{2,l}_{j}(0)=\cos(\sqrt{\mu_l})U^l(v)\sum_{e_j\in E_v}1=\text{deg}(v)\cos(\sqrt{\mu_l})U^l(v).
	\end{flalign*}
Hence
\begin{flalign*}
	 \cos(\sqrt{\mu_l})U^l(v)=\frac{1}{\text{deg}(v)}\sum_{e_j\in E_v}\phi_j^{2,l}(1)=\frac{1}{\text{deg}(v)}\sum_{e_j\in E_v}U^l(e_j(1)).
	 \end{flalign*}
 Since each $e_j(1),\,e_j\in E_v$ denotes an adjacent  vertex of $v$, then
 \begin{flalign}\label{e0}
\frac{1}{\text{deg}(v)}\sum_{v'\sim v}U^l(v')=\cos(\sqrt{\mu_l})U^l(v).
	\end{flalign}
	Similarly, if $e_j(1)=v$ for all $e_j\in E_v$, we can also get \begin{flalign}\label{e1}
		\frac{1}{\text{deg}(v)}\sum_{v'\sim v}U^l(v')=
		\cos(\sqrt{\mu_l})U^l(v).
	\end{flalign}
It follows from \eqref{e0}, \eqref{e1} and the definition of $\tilde{A}(\Gamma)$ (see Lemma \ref{operatorproperty}) that
	\begin{flalign*}
		\tilde{A}(\Gamma)U^l=\cos(\sqrt{\mu_l})U^l.
	\end{flalign*}
Hence $U^l$ is the eigenvector of $\tilde{A}(\Gamma)$ corresponding to eigenvalue $\cos(\sqrt{\mu_l})$.

    If there exists such a vector $U\in\mathbf{U}$ satisfying  $U(e_j(0))=U(e_j(1))=0$ for some edge $e_j\in E(\Gamma)$, then the corresponding  $\phi^{2,l}$ satisfies $\phi^{2,l}_j(0)=\phi^{2,l}_j(1)=0$ and further $\phi^{2,l}_j\equiv0$. Therefore, gaining the strong Feller property, according to \eqref{eigenfunction}, reduces to locating $0$ entries of  each vector $\mathbf{U}$. Determining these zero positions allows us to check whether the eigenfunction $\phi^{2,l}$ equals to $0$ on some edge. Under Assumption \ref{nonzero eigenvalue}(i), eigenfunctions corresponding to nonzero eigenvalue belong to $\sigma(\tilde{A}(\Gamma))\cap(-1,1)$ are nonzero on all edges, thus we only need to study $U\in\mathbf{U}$ satisfying $\tilde{A}(\Gamma)U=0$ (when $\cos(\sqrt{\mu_l})=0$). The relation  $\tilde{A}(\Gamma)=D^{-1}(\Gamma)A(\Gamma)$ implies that $A(\Gamma)$ and $\tilde{A}(\Gamma)$ share the same eigenvectors for eigenvalue $0$. Therefore it suffices to analyze $A(\Gamma)U=0$.
    	
    \textbf{Step 3: Quantifying effect of $S$-atom on $\mathcal{R}_t$.}
     Recall that the set \textbf{Core}($\Gamma$) introduced in Section \ref{decomposition} consists of vertices $v\in V(\Gamma)$ such that $U(v)\equiv0$ for all eigenvectors $U\in\mathbf{U}$ associated with eigenvalue $0$. Via the null decomposition, given any tree, we can decompose it into the $S$-set and $N$-set, which are collections of $S$-trees and $N$-trees, respectively. Since $N$-trees are non-singular, it suffices to focus on $S$-trees. Furthermore, every $S$-tree can be decomposed into an $A$-set, which is a set of  $S$-atoms. Here, an $S$-atom is the minimal structural unit required in our work. Consider such an $S$-atom $S$ with $m$ edges, $n = m+1$ vertices, and adjacency matrix $A(S)$. From \cite[Theorem 3.15]{jaume2017s}, it holds that
	\begin{flalign*}
		\text{dim}(\mathcal{N}(S))=\text{dim}(\text{ker}(A(S)))=|\textbf{Supp}(S)|-|\textbf{Core}(S)|=:d.
	\end{flalign*}
Let $\{g_1,g_2,\dots,g_d\}$ be a basis of $\ker(A(S))$, and define $G:=[g_1,g_2,\dots,g_d]\in\mathbb{R}^{n\times d}$, which satisfies $\text{rank}(G)=d$. Let $M(S)\in\mathbb{R}^{n\times m}$ denote the incidence matrix of $S$. Then $\text{rank}(M(S))=m$. The following claim shows that multiplying by $M(S)$ does not reduce the rank of $G$, whose proof is postponed in Section \ref{Proof of Claim}.

\textit{Claim 2: $\text{rank}(M(S)^\top G)=d$.}

From \textit{Claim 2}, the matrix $M(S)^\top G$ can be expressed as $M(S)^\top G=[\beta_1,\beta_2,\dots,\beta_d]$, where $\beta_1,\beta_2,\dots,\beta_d\in\mathbb{R}^n$ are linearly independent. The following claim states that support of any nonzero eigenvector for $0$ eigenvalue is an independent set (i.e., consists of isolated vertices), whose proof is postponed in Section \ref{Proof of Claim}.

\textit{Claim 3: For $S$-atom $S$, the support $\mathbf{Supp}(S)$, that is the support of its null space, consists of isolated vertices. In other words, no edge connects two vertices both lying in $\mathbf{Supp}(S)$.}
	
Define the vector \[
\varphi^l=\left[\text{sgn}\bigl(\phi^{2,l}_1(0)\bigl)+\text{sgn}\bigl(\phi^{2,l}_1(1)\bigl),\text{sgn}\bigl(\phi^{2,l}_2(0)\bigl)+\text{sgn}\bigl(\phi^{2,l}_2(1)\bigl),\dots,\text{sgn}\bigl(\phi^{2,l}_m(0)\bigl)+\text{sgn}\bigl(\phi^{2,l}_m(1)\bigl)\right]^\top.
\]\textit{Claim 3} implies that, for each component  $\varphi^l_j,\, j=1,2,\ldots,m$ of vector $\varphi^l$, at most one of the values $\phi_j^{2,l}(0)$ and $\phi_j^{2,l}(1)$ is nonzero. Moreover, for $M(S)^\top\in\mathbb{R}^{m\times n},\,j=1,2,\dots,m,\text{ and }i=1,2,\dots,n$, the $(j,i)$-entry $m_{j,i}=1$ if $v_i$ is an endpoint of $e_j$ and $m_{j,i}=0$ if not. Due to this fact, for each fixed $\mu_l=\left(l+\tfrac{1}{2}\right)^2\pi^2,\,l\in\mathbb{Z}$, the columns of $M(S)^\top G$ are mutually linearly independent and hence $[\beta_1,\beta_2,\dots,\beta_d]$ can be regarded as the complete basis of vector $\varphi^l$, i.e.
	for $\forall\, l\in\mathbb{Z},\,\varphi^l=\sum_{i=1}^d\alpha_i\beta_i,\, \alpha_i\in\mathbb{R}$. Therefore
	\begin{flalign*}
		Q\varphi^l=\sum_{i=1}^d\alpha_i(Q\beta_i).
	\end{flalign*}
	Our goal is to deduce $\varphi^l = 0$ from $Q\varphi^l = 0$. For this purpose, the vectors $[Q\beta_1, Q\beta_2, \dots, Q\beta_d]$ must be linearly independent. This forces $\text{rank}(Q)=|\mathcal{Y}|\ge d$, since $[\beta_1,\beta_2, \dots, \beta_d]$ are linearly independent.
	Hence, it follows from $\text{dim}(\text{ker}(Q))=|\mathcal{Z}|$ and $|\mathcal{Y}|+|\mathcal{Z}|=m$ that 
	\begin{flalign*}
		|\mathcal{Z}|\le \min\{m-d,m-1\} =\min\{m-|\textbf{Supp}(S)|+|\textbf{Core}(S)|,m-1\}.
	\end{flalign*}

\textbf{Step 4: Quantifying effect of general tree on $\mathcal{R}_t$.}
	For an arbitrary tree $\Gamma$ with $m$ edges, the noise can also be removed on any edge belonging to $\textbf{ConnE}(\Gamma)\cup\{\textbf{BondE}(S):S\in\mathcal{F}_S(\Gamma)\}$. Consequently, we arrive at the final conclusion: the sharp upper bound on the number of edges without noises while maintaining the strong Feller property of $(\mathcal{R}_t)_{t\ge0}$ is
	\begin{flalign*}
		|\mathcal{Z}|&\le\min\Big\{ m-\Bigl(\sum_{S\in\mathcal{F}_S(\Gamma)}\sum_{A\in\mathcal{F}_A(S)}\bigl({|\textbf{Supp}(A)|-|\textbf{Core}(A)|}\bigl)\Bigl),m-1\Big\}\\
		&=\min\{m-|\textbf{Supp}(\Gamma)|+|\textbf{Core}(\Gamma)|,m-1\}.
	\end{flalign*}

\textbf{Step 5: Quantifying effect of general tree on $\mathcal{S}_t$.}
 For semigroup $(\mathcal{S}_t)_{t\ge0}$ of nonlinear equation \eqref{SEE}, a procedure based on the Girsanov theorem applied as in \cite[Example 2.3]{maslowski2000probabilistic} yields the strong Feller property under Assumption \ref{b1} and \ref{nonzero eigenvalue}(\romannumeral2).
 
	(\uppercase\expandafter{\romannumeral2}) Proof of irreducibility. Regarding the semigroup $(\mathcal{R}_t)_{t\ge0}$ in \eqref{linear semigroup}, we have the chain of equivalences: \begin{align*}
	\mathcal{R}_t \text{ is\  irreducible} \Leftrightarrow\text{ supp}(\mathcal{R}_t(x,\cdot)) = H \Leftrightarrow\ker(Q_t^{\frac{1}{2}}) = \{0\},
\end{align*}
where $\Leftrightarrow$ denotes ``if and only if'' and $\text{supp}(\mathcal{R}_t(x,\cdot))$ denotes the support of probability measure $\mathcal{R}_t(x,\cdot)$.
Hence, the conclusion coincides with that obtained in part \textup{(\uppercase\expandafter{\romannumeral1})}. The irreducibility of the semigroup $(\mathcal{S}_t)_{t\ge0}$ in \eqref{nonlinear semigroup} follows from the analogous analysis of the strong Feller property for $(\mathcal{S}_t)_{t\ge0}$.

 The proof of Theorem \ref{main theorem} is thus finished.
\end{proof}
 
 \begin{proof}[Proof of Theorem \ref{invariantmeasureth}]
The existence of an invariant measure can be obtained by applying the Krylov--Bogoliubov theorem, where the tightness of the distribution of solution comes from the compact embedding $D((-\Delta)^\alpha) \subset\subset H,\,\alpha > 0$ together with the regularity estimate: for any $\alpha\in(0,\tfrac{1}{4})$ and $X_0\in H$, there exists a constant $C>0$ independent of $t$ such that
\begin{flalign}\label{spatial regularity}
	\sup_{t\geq1}\mathbb{E}\left[\| X(t,X_0)\|_{\mathbb{H}^{2\alpha}}\right]\leq C.
\end{flalign} 
The uniqueness and exponential ergodicity of the invariant measure are ensured by: there exist constants $c,\, C>0$ independent of $t$, such that for any $X_0,X_1\in H$,
\begin{flalign}\label{attractive}
	\left(\mathbb{E}\left[\| X(t,X_0)-X(t,X_1)\|_H^2\right]\right)^{\frac{1}{2}}\leq C\| X_0-X_1\|_He^{-ct}.
\end{flalign}
The proofs of \eqref{spatial regularity} and \eqref{attractive} are postponed to Appendix \ref{im}.
\end{proof}

\subsection{Proofs of claims}\label{Proof of Claim}
\begin{proof}[Proof of Claim 1]
	The proof is split into two cases, $\lambda=0$ and $\lambda\neq0$.
	
	(a) For $\lambda=0$, $\phi^{1,0}_j(x_j)=a_jx_j+b_j,\ j=1,2,\ldots,m.$ Then for all $j,\ (\phi^{1,0}_j)'(x_j)=a_j,\ \forall x_j\in[0,1].$
	Define the vector $a=[a_1,a_2,\ldots,a_m]^\top$ and by the Neumann--Kirchhoff condition \eqref{eq:kirchhoff},
	\begin{flalign*}
		\Phi^+(\phi^{1,0})'(\mathbf{0})-\Phi^-(\phi^{1,1})'(\mathbf{1})=\Phi a=0,
	\end{flalign*} 
	where $\mathbf{0}=[0,0,\ldots,0]^\top,\, \mathbf{1}=[1,1,\ldots,1]^\top\in\mathbb{R}^m$. From $\Phi^\top y=0\ ( y\in\mathbb{R}^{m+1})$, each edge  $e=(v_i,v_j)\in E$ yields $y_i-y_j=0$, hence  $y_1=y_2=\dots=y_{m+1}$. These conditions give $m$ linearly independent equations, illustrating $\text{rank}(\Phi)=m$. By a similar argument, the rank of incidence matrix $M(\Gamma)$ is  $\text{rank}(M(\Gamma))=m$. Thus both matrices $\Phi$ and $M(\Gamma)$ have full column rank. Then $a_1=a_2=\cdots=a_m=0$. By the continuity condition and $\|\phi^{1,0}\|_H=1$, we derive  $b_1=b_2=\cdots=b_m=\frac{1}{\sqrt{m}}$.  Therefore $\phi^{1,0}_j\equiv \frac{1}{\sqrt{m}}$ for all $j$.
	
	(b) Since the eigenvalue $\lambda = -k^2\pi^2,\ k\in\mathbb{N}^+$ has multiplicity one, by Lemma \ref{operatorproperty}, the corresponding eigenfunction satisfies
	\begin{flalign*}
		\phi^{1,k}_j(x_j)&=c^k_{1,j}\cos(k\pi x_j)+c^k_{2,j}\sin(k\pi x_j),\, \forall\, x_j\in[0,1],\\
		\phi^{1,k}_j(0)&=c^k_{1,j},\ \phi^{1,k}_j(1)=(-1)^kc^k_{1,j},\, j=1,2,\ldots,m.
	\end{flalign*}And for its derivative,
	\begin{flalign*}
		(\phi^{1,k}_j)'(x_j)&=-k\pi c^k_{1,j}\sin(k\pi x_j)+k\pi c^k_{2,j}\cos(k\pi x_j),\\
		(\phi^{1,k}_j)'(0)&=k\pi c_{2,j}^k,\ (\phi^{1,k}_j)'(1)=(-1)^kk\pi c^k_{2,j},\, j=1,2,\ldots,m.
	\end{flalign*}
	Define the vector \begin{flalign*}
		C_1^k:=\bigl[c^k_{1,1},c^k_{1,2},\ldots,c^k_{1,m}\bigr]^\top,\ 
		C_2^k:=\bigl[c^k_{2,1},c^k_{2,2},\ldots,c^k_{2,m}\bigr]^\top.
	\end{flalign*} By the Neumann--Kirchhoff condition \eqref{eq:kirchhoff}, for $ k\in\mathbb{N}^+$ and $i\in\mathbb{N}$, 
	\begin{flalign*}
		\Phi^+(\phi^{1,k})'(\mathbf{0})-\Phi^-(\phi^{1,k})'(\mathbf{1})=\begin{cases}
			k\pi\Phi C_2^k=0,\ &k=2i,\\
			k\pi\left(\Phi^++\Phi^-\right)C_2^k=k\pi M(\Gamma)C_2^k=0,\ &k=2i+1.
		\end{cases}
	\end{flalign*}
	Therefore $c^k_{2,1}=c^k_{2,2}=\cdots=c^k_{2,m}=0.$ The continuity condition implies that if for some $j_0$, the coefficient $c^k_{1,j_0}$ vanishes, then $c^k_{1,j}=0$ for every $j=1,\dots,m$. Consequently, none of the coefficients of $\cos(k\pi x_j)$ can be zero, which yields $\phi^{1,k}_j \not\equiv 0$ on every edge $e_j$. Thus we finish the proof of \textit{Claim 1}.
\end{proof}
\begin{proof}[Proof of Claim 2]
By the fundamental rank inequality,
	\begin{align}\label{bound1}
		d-1&=\text{rank}(M(S))+\text{rank}(G)-(m+1)\notag\\
		&\le \text{rank}(M(S)^\top G)\le \min\{\text{rank}(M(S)),\text{rank}(G)\}=d.
	\end{align}
	Moreover, from \cite[Lemma 8.2.3]{godsil2013algebraic} we have the relation $M(S)M(S)^\top=D(S)+A(S)$ and thus
	\begin{align*}
		M(S)M(S)^\top G=D(S)G+A(S)G.
	\end{align*}
	Multiplying both sides by the matrix $D^{-1}$ yields
	\begin{align*}
		D^{-1}(S)M(S)M(S)^\top G=G+D^{-1}(S)A(S)G=G+\tilde{A}(S)G=G.
	\end{align*}
	Consequently
	\begin{align*}
		d &= \text{rank}(D^{-1}(S)M(S)M(S)^\top G)
		= \text{rank}(M(S)M(S)^\top G) \\
		&\le \min\{\text{rank}(M(S)), \text{rank}(M(S)^\top G)\}
		\le \text{rank}(M(S)^\top G).
	\end{align*}
	Combining this and \eqref{bound1} yields  $\text{rank}(M(S)^\top G)=d$, which proves the \textit{Claim 2}.
\end{proof}
\begin{proof}[Proof of Claim 3]
	Let $P\neq0$ be any one eigenvector of $A(S)$ satisfying $A(S)P=0$. Then every component of $P$ corresponds to a vertex of $S$. Suppose, to the contrary, that there exists an edge $e\in E(S)$ such that $P(e(0)) \neq 0$ and $P(e(1)) \neq 0$. Then the vertex-induced subgraph $\langle \mathbf{Supp}(P) \rangle$ contains a connected component, which is necessarily a connected non‑trivial tree.
	It contains a vertex $r \in V(S)$ with deg$(r)=1$ whose unique neighbor in $S$ is denoted by $w \in V(S)$. Moreover, because $r, w \in \mathbf{Supp}(P)$, we have $P(r) \neq 0$ and $P(w) \neq 0$. The condition $A(S)P = 0$ implies that
	\begin{flalign*}
		0=\left(A(S)P\right)_r=\sum_{v'\sim r}P(v')=P(w)+\sum_{v'\sim r,v'\neq w}P(v')=P(w)\neq0,
	\end{flalign*}
	which causes a contradiction. Therefore, the support of $P$ must consist of isolated vertices. 
\end{proof}
\bibliography{reference}
\section{Appendix}\label{appendix}
\subsection{Additional numerical tests}
\subsubsection{Some supplementary figures for Section \ref{numerical}}\label{im_supp}
This section supplies the numerical verification for exponential ergodicity for SPDEs on star graph and $S$-atom $T'$. We plot the evolution of empirical average \eqref{invariant} at each time node. For star graph, we set the initial values to be: $X_0^{(1)}=0,\, X_0^{(2)}=\sum_{l=1}^{N}\phi^{1,l},\,X_0^{(3)}=\sum_{l=1}^{N}\xi^{1,l}\phi^{1,l}+\sum_{i=1}^{3}\sum_{l=1}^{N}\tilde{\xi}^{i,l}\Psi^{i,l}$, where $\{\xi^{1,l},\, l=1,2,\ldots,N\},\ \{\tilde{\xi}^{i,l},\, i=1,2,3,\, l=1,2,\ldots,N\}$ are all independently drawn from the standard normal distribution $\mathscr{N}(0,1)$. Fig.~\ref{IM_star} displays that all curves exhibit exponential convergence to the same limiting value, under three different initial values for each noise configuration, verifying the existence of an invariant measure and its exponential ergodicity for \eqref{SEE} on the star graph (see Theorem \ref{invariantmeasureth}).
\begin{figure}[htbp]
	\centering
	\includegraphics[width=0.65\textwidth]{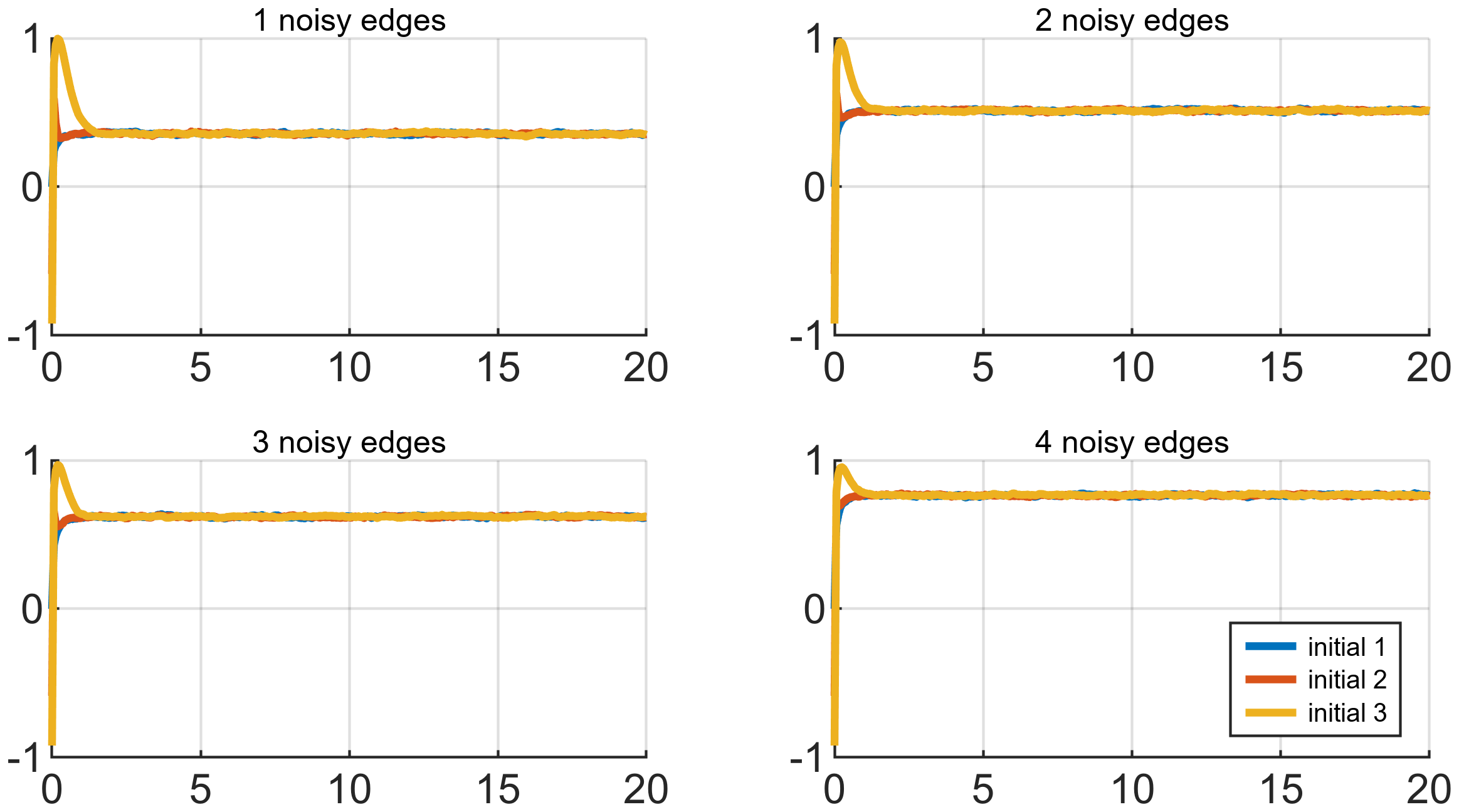}
	\caption{Exponential ergodicity verification for \eqref{SEE} on star graph with 4 edges. $x$ axis: time, $y$ axis: empirical average. $N=2^6,\, \tau=2^{-3},\,  T=20,\,M_{traj}=500,\, \psi(h)=\sin(\|h\|_H),\,h\in H$. 3 initial values: $X_0^{(1)},\, X_0^{(2)},\, X_0^{(3)}$.}
	\label{IM_star}
\end{figure}

For SPDE \eqref{SEE} on $S$-atom $T'$, we consider three types of initial data: $X_0^{(1)}=0,\, X_0^{(2)}=\sum_{l=1}^{N}\phi^{1,l},\,X_0^{(3)}=\sum_{l=1}^{N}\xi^{1,l}\phi^{1,l}+\sum_{i=1}^{6}\sum_{l=1}^{N}\tilde{\xi}^{i,l}\Psi^{i,l}$, where $\{\xi^{1,l},\, l=1,2,\ldots,N\},\ \{\tilde{\xi}^{i,l},\, i=1,2,\dots,6,\, l=1,2,\ldots,N\}$ are all independently drawn from the standard normal distribution $\mathscr{N}(0,1)$. Fig.~\ref{IM_T} displays that empirical averages \eqref{im} converge to the same limiting value in an exponential rate, under three different initial values for each noise configuration. This numerically verifies that the SPDE \eqref{SEE} on the $S$-atom $T'$ has a unique invariant measure which is exponentially ergodic. 

\begin{figure}[htbp]
	\centering
	{
		\includegraphics[width=0.7\textwidth]{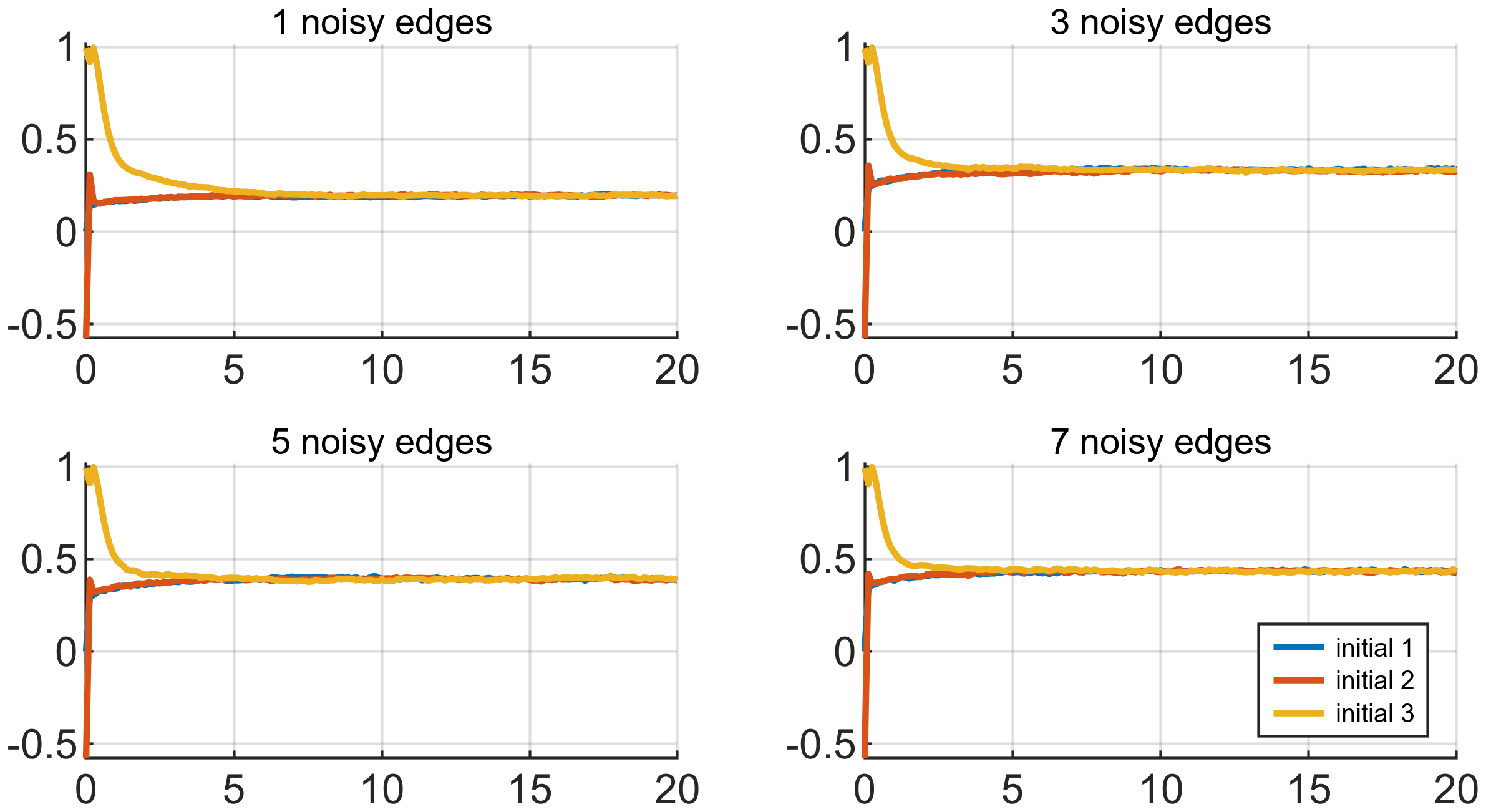}
	}
	\caption{Exponential ergodicity verification for \eqref{SEE} on $S$-atom $T'$. $x$ axis: time, $y$ axis: empirical average. $N=2^5,\,\tau=2^{-3},\, T=20,\, M_{traj}=500,\, \psi(h)=\sin(\|h\|_H),\,h\in H$, 3 initial values: $X_0^{(1)},\, X_0^{(2)},\, X_0^{(3)}$.}
	\label{IM_T}
\end{figure}
\subsubsection{Linear SPDE on graph}\label{linear verify}
This section gives the numerical verification of strong Feller property and irreducibility for linear SPDE on graph. Parameters are the same as those of the nonlinear case. 

\textbf{(I) Chain graph.}

\textbf{Verification of strong Feller property}.

 We use the Monte Carlo method to compute the difference \eqref{lineardiffernece}. In Fig.~\ref{SF_linear}(A), all lines except the blue one decay to $0$ as $\epsilon\to0$. This means that $\mathcal{R}_T$ is strong Feller, provided noise acts on at least one edge.
\begin{figure}[htbp]
	\centering
	\subfloat[chain graph]
	{
		\includegraphics[width=0.30\textwidth]{SF_chain_linear}
	}
	\quad
	\subfloat[star graph]
	{
		\includegraphics[width=0.30\textwidth]{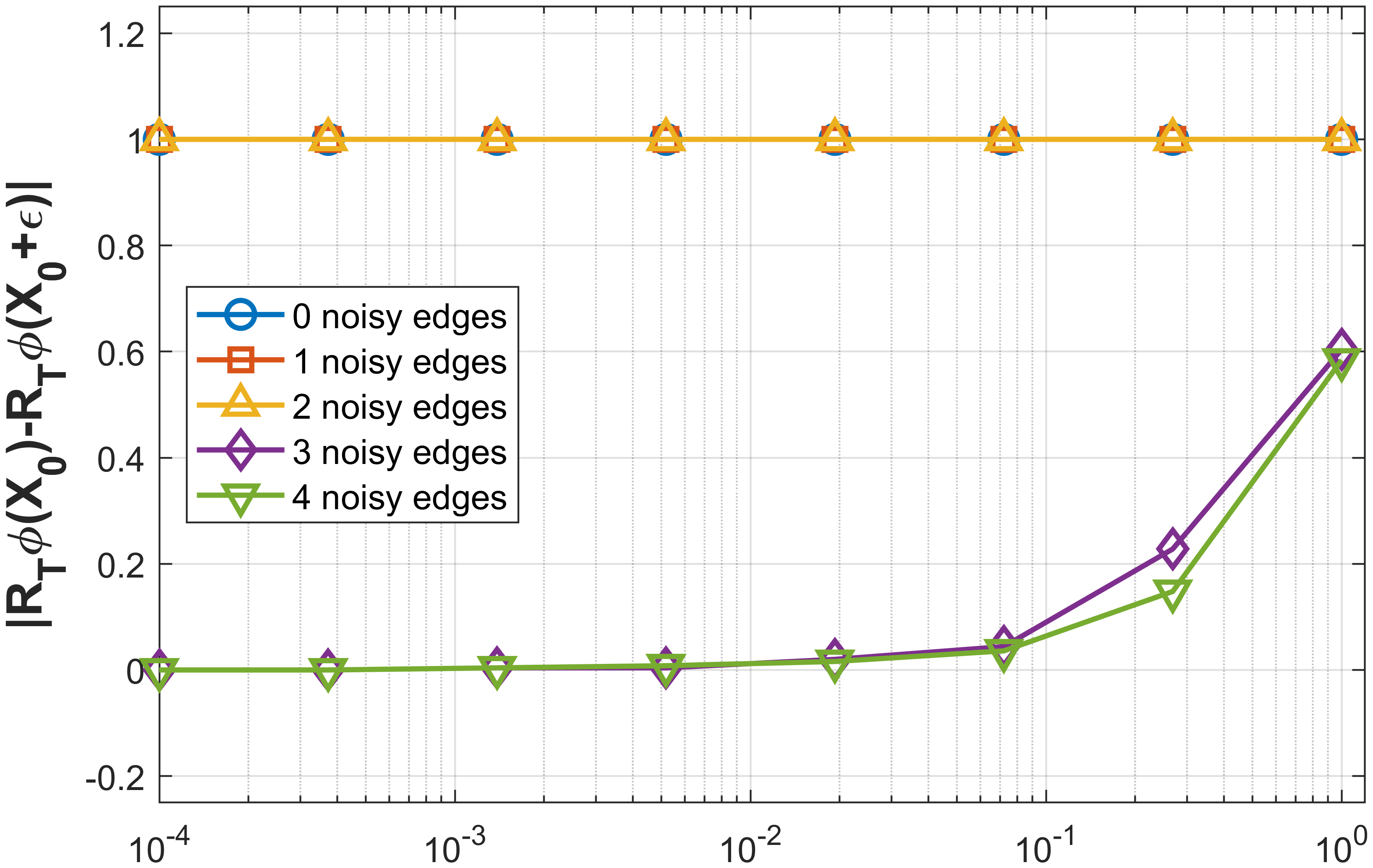}
	}
\quad
\subfloat[$S$-atom $T'$]
{
	\includegraphics[width=0.30\textwidth]{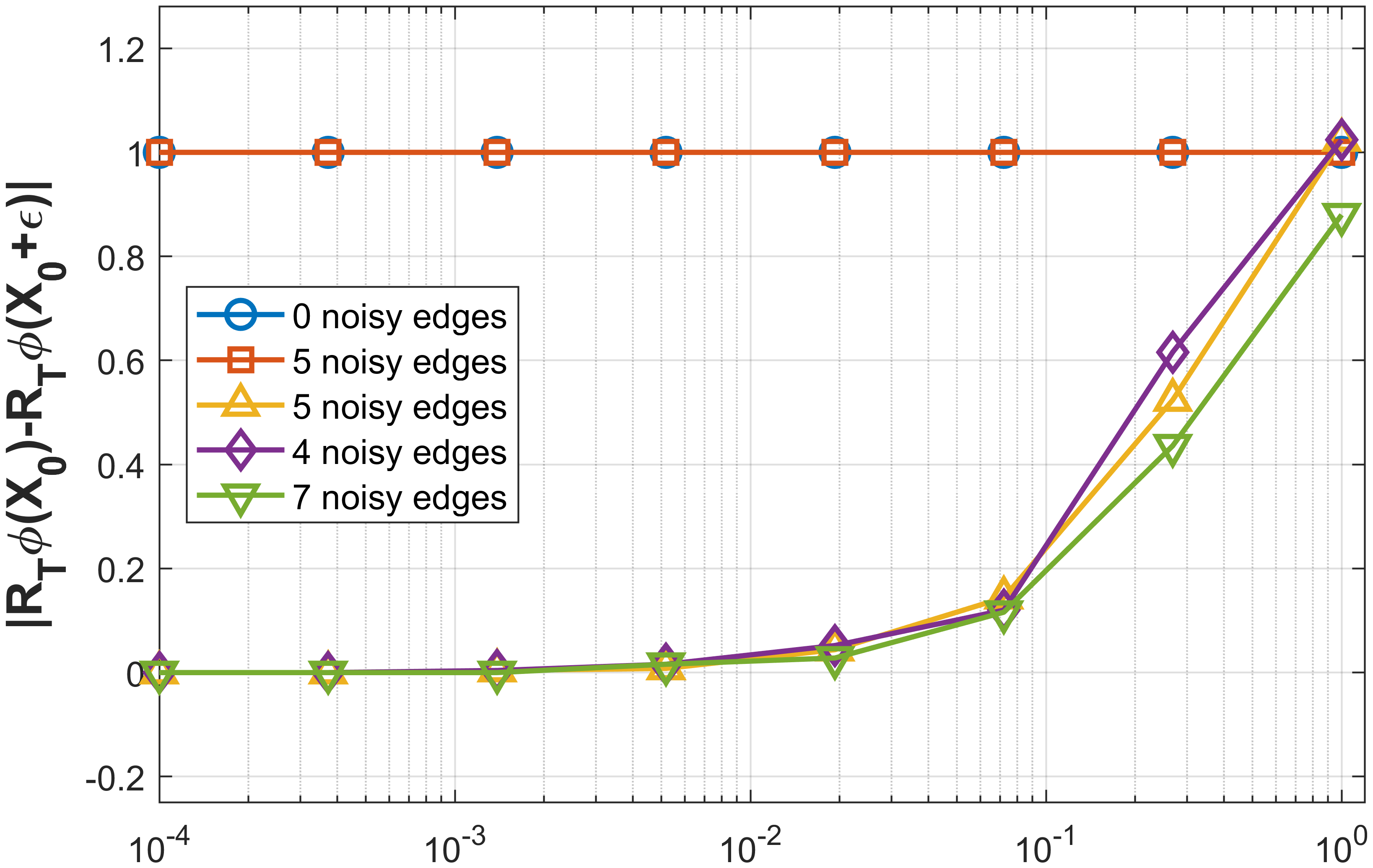}
}
	\caption{Strong Feller verification for $\mathcal{R}_T$ on different trees. $x$ axis: $\epsilon$, $y$ axis: the difference~\eqref{lineardiffernece}.  $N=2^6,\, \tau=2^{-5},\, T=2^{-1},\, \epsilon \in \{ 10^{-\tfrac{4k}{7}} : k = 0,1,\dots,7\},\, M_{traj}=500$.}
	\label{SF_linear}
\end{figure}

\textbf{Verification of irreducibility}. We estimate reachability probabilities for every eigen-subspace via \eqref{irreducible_nonlinear}. Results for $\mathcal{R}_T$ is shown in Fig.~\ref{IR_chain_linear}. Columns 1-3 present the eigen‑directions of $\sigma_2$, and column 4 presents those of $\sigma_1$. We observe that the probabilities across all eigen‑directions are positive, except for the zero noisy edge case (the blue line). This demonstrates that $\mathcal{R}_T$ is irreducible at time $T$, provided more than one edge is noisy.  
\begin{figure}[htbp]
	\centering
	{
		\includegraphics[width=0.85\textwidth]{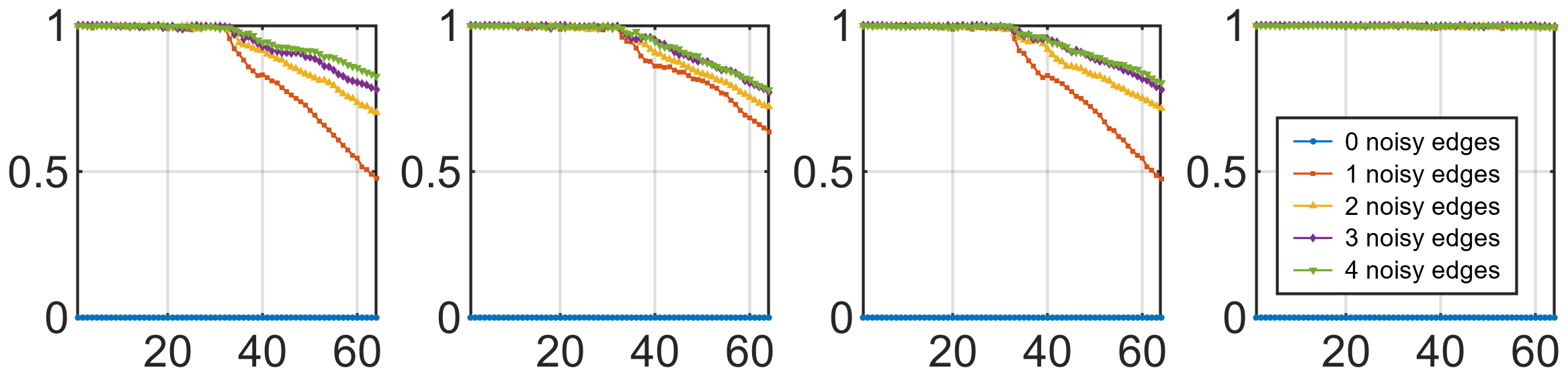}
	}
	\caption{Irreducibility verification for $\mathcal{R}_T$ on chain graph with 4 edges. Subfigures from left to right: $(\Psi^{1,l})_{1\le l\le N},\,(\Psi^{2,l})_{1\le l\le N},\,(\Psi^{3,l})_{1\le l\le N},\,(\phi^{1,k})_{0\le k\le N-1}$. $x$ axis: dimension $N$, $y$ axis: reachability probability. $N=2^6,\, \tau=2^{-5},\, T=2^{-1},\, M_{traj}=500$.}
	\label{IR_chain_linear}
\end{figure}

\textbf{(II) Star graph.}

\textbf{Verification of strong Feller property}. Fig.~\ref{SF_linear} (B) shows that only for the cases of $3$ noisy edges (purple line) and $4$ noisy edges (green line), \eqref{lineardiffernece} decays to $0$ when $\epsilon$ converges to $0$. This means that the semigroup $(\mathcal{R}_t)_{t\ge0}$ on the star graph is strong Feller at time $T$ with at most one noise exception.

\textbf{Verification of irreducibility}. In
Fig.~\ref{IR_star_linear}, columns 1-3 present the reachability probability \eqref{irreducible_nonlinear} for eigen‑directions corresponding to $\sigma_2$, and column 4 presents those to $\sigma_1$. As shown in the first column, only the cases of 3 noisy edges (purple line) and 4 noisy edges (green line) make reachability probabilities of eigen-directions $\Psi^{1,l},\,l=1,2,\ldots,N$ positive. And the two cases also produce positive reachability probabilities for other eigen-directions, as seen in columns 2-4. Consequently, $(\mathcal{R}_t)_{t\ge0}$ is irreducible at time $T$, provided all edges are noisy with one edge exception.  
\begin{figure}[htbp]
	\centering
	{
		\includegraphics[width=0.85\textwidth]{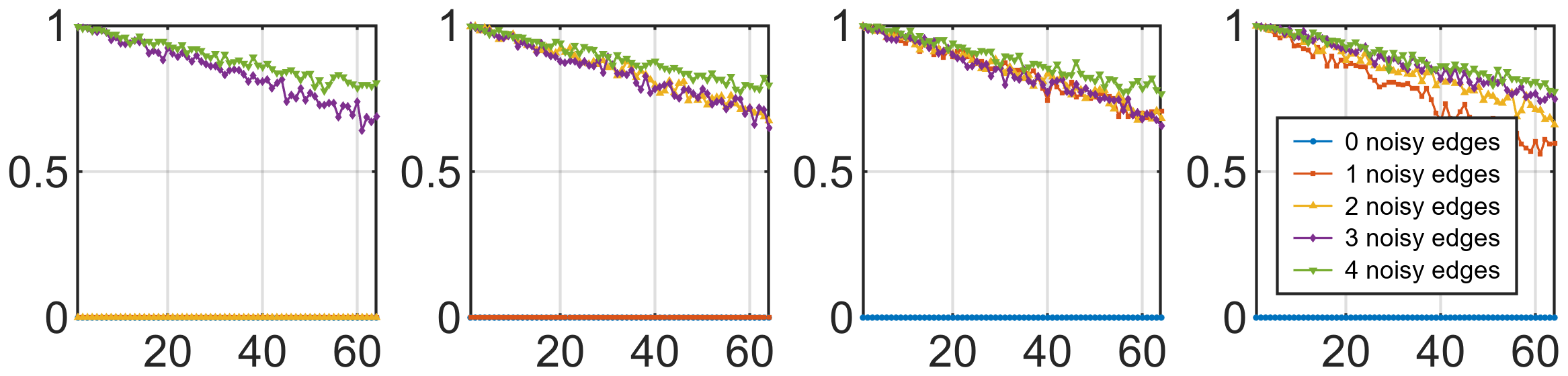}
	}
	\caption{Irreducibility verification for $\mathcal{R}_T$ on star graph with 4 edges.  Subfigures from left to right: $(\Psi^{1,l})_{1\le l\le N},\,(\Psi^{2,l})_{1\le l\le N},\,(\Psi^{3,l})_{1\le l\le N},\,(\phi^{1,k})_{0\le k\le N-1}.$ $x$ axis: dimension $N$, $y$ axis: reachability probability. $N=2^6,\, \tau=2^{-3},\, T=2^{-1},\, M_{traj}=500$.}
	\label{IR_star_linear}
\end{figure}

\textbf{(III) $S$-atom $T'$.}

\textbf{Verification of strong Feller property}.
Fig.~\ref{SF_linear} (C) shows that the red and blue lines remain constant at $1$, whereas the other three lines decay to $0$. This indicates that the semigroup $(\mathcal{R}_t)_{t\ge0}$ on $T'$ is not strong Feller at time $T$ when $\mathcal{Z}=\{e_{67},e_{68}\}$ or $\mathcal{Z}=E(T')$, but is strong Feller when $\mathcal{Z}=\{e_{12},e_{67}\}$,  $\mathcal{Z}=\{e_{12},e_{15},e_{67}\}$ or $\mathcal{Z}=\varnothing$.

\textbf{Verification of irreducibility}. In
Fig.~\ref{IR_T_linear}, the first six subfigures display six eigenfunctions corresponding to  $\sigma_2$ (the first two for $\frac{\sqrt{102}}{12} $ and $-\frac{\sqrt{102}}{12}$, the next four for $0$) and the last subfigure for eigenfunctions of $\sigma_1$. The red lines in the sixth subfigure of Fig.~\ref{IR_T_linear} as well as all the blue lines show zero reachability probability. Consequently, for $\mathcal{Z}=\{e_{67},e_{68}\}$ and $\mathcal{Z}=E(T')$, $(\mathcal{R}_t)_{t\ge0}$ is not irreducible at time $T$. But the semigroup is irreducible at $T$ for either the cases $\mathcal{Z}=\{e_{12},e_{67}\}$, $\mathcal{Z}=\{e_{12},e_{15},e_{67}\}$ or $\mathcal{Z}=\varnothing$. 

\begin{figure}[htbp]
	\centering
	{
		\includegraphics[width=0.75\textwidth]{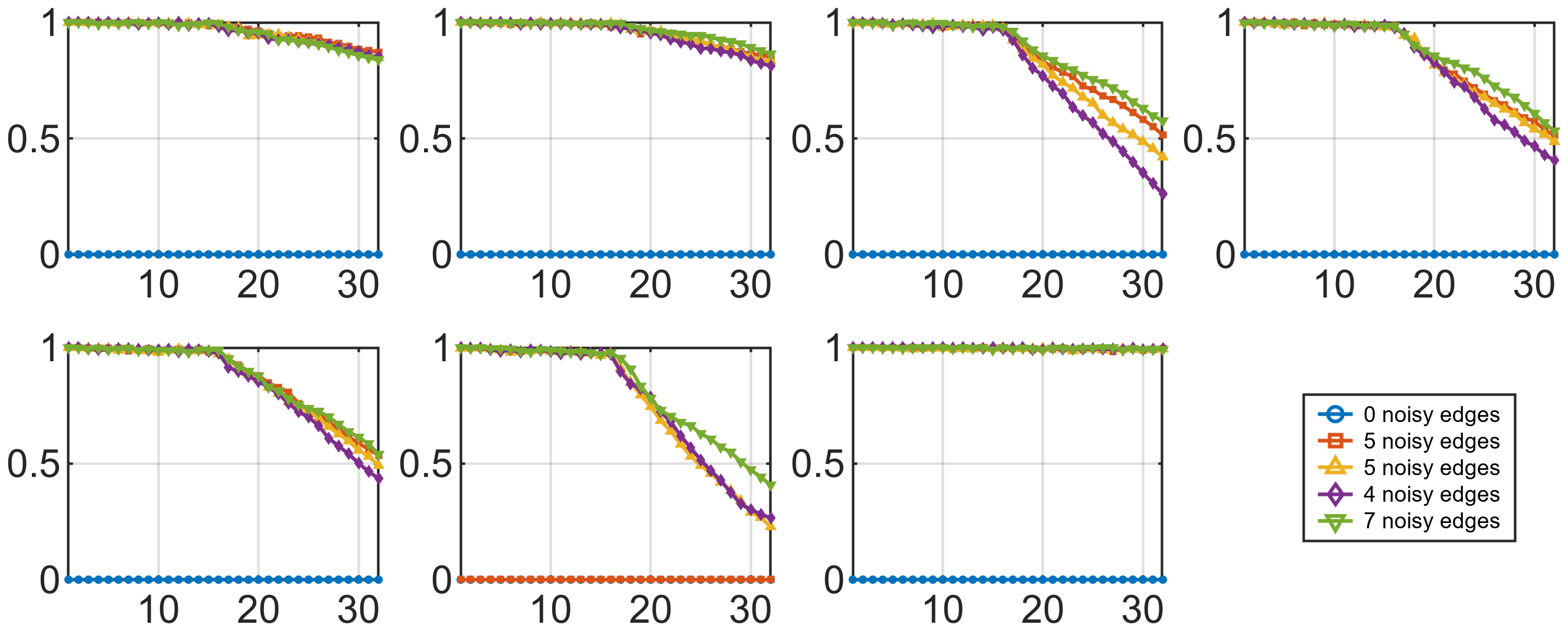}
	}
	\caption{Irreducibility verification for  $\mathcal{S}_T$ on $S$-atom  $T'$.  Seven subfigures from left to right: $(\Psi^{1,l})_{1\le l\le N},\ldots,(\Psi^{6,l})_{1\le l\le N},(\phi^{1,k})_{0\le k\le N-1}$. $x$ axis: dimension $N$, $y$ axis: reachability probability. $N=2^5,\, \tau=2^{-4},\, T=2^{-1},\,M_{traj}=500$.}
	\label{IR_T_linear}
\end{figure}
\subsubsection{SPDE with non-Lipschitz nonlinearity on graph}\label{nonlip}
For SPDE \eqref{SEE} on chain graph and star graph with the nonlinearity $b_j(x)=x-x^3,\, j=1,2,\ldots,m,\, x\in\mathbb{R}$, we present some numerical observations concerning the strong Feller property, irreducibility and exponential ergodicity of a unique invariant measure in Figs.~\ref{SF_poly}, \ref{IR_poly} \ref{IM_chain_poly} and \ref{IM_star_poly}, which show the similar results to the Lipschitz nonlinearity case (see Corollaries \ref{bound_chain}, \ref{bound_star} and Theorem \ref{invariantmeasureth}).
\begin{figure}[htbp]
	\centering
	\subfloat[Chain graph with $4$ edges]
	{
		\includegraphics[width=0.4\textwidth]{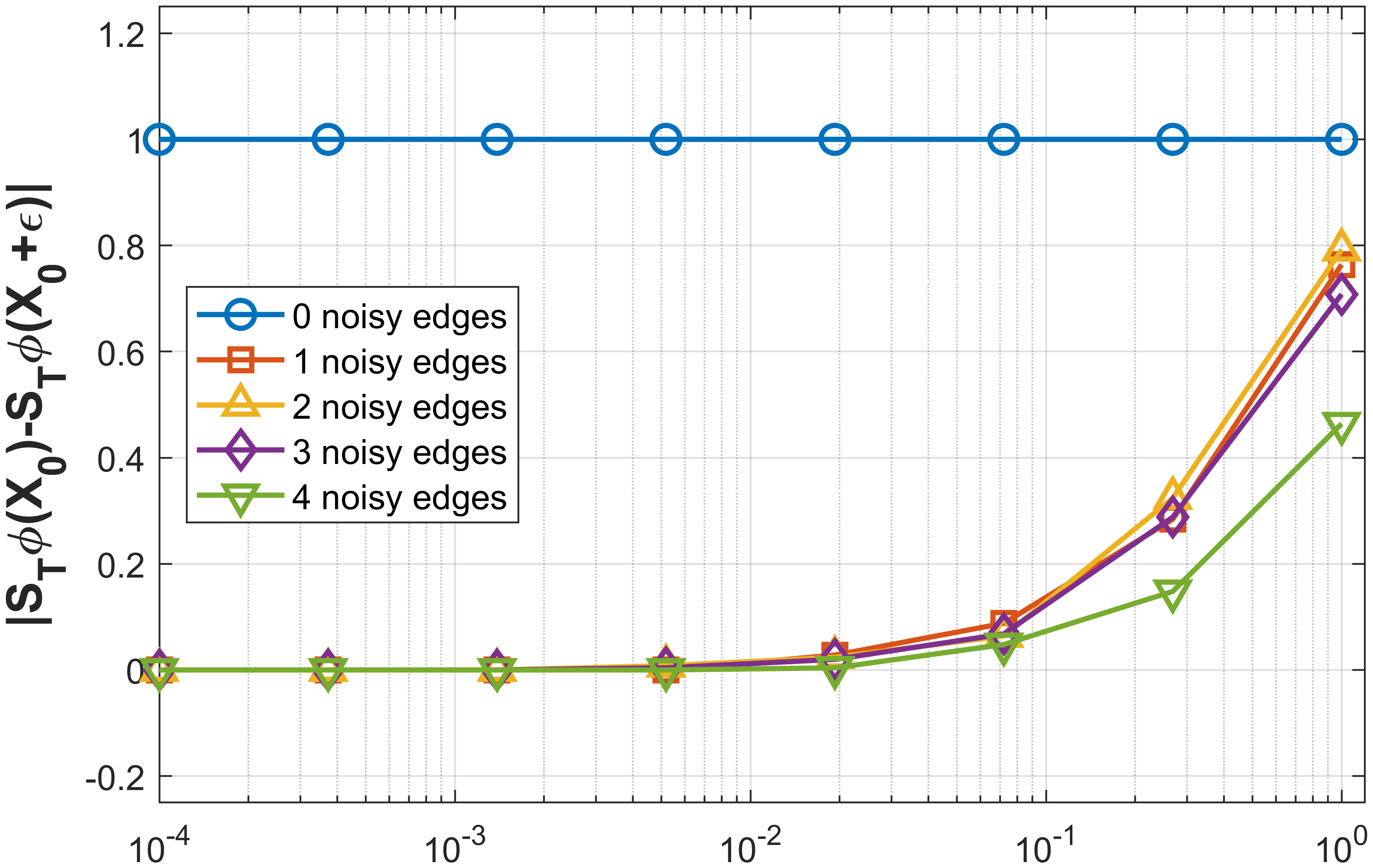}
	}
	\quad
	\subfloat[Star graph with $4$ edges]
	{
		\includegraphics[width=0.4\textwidth]{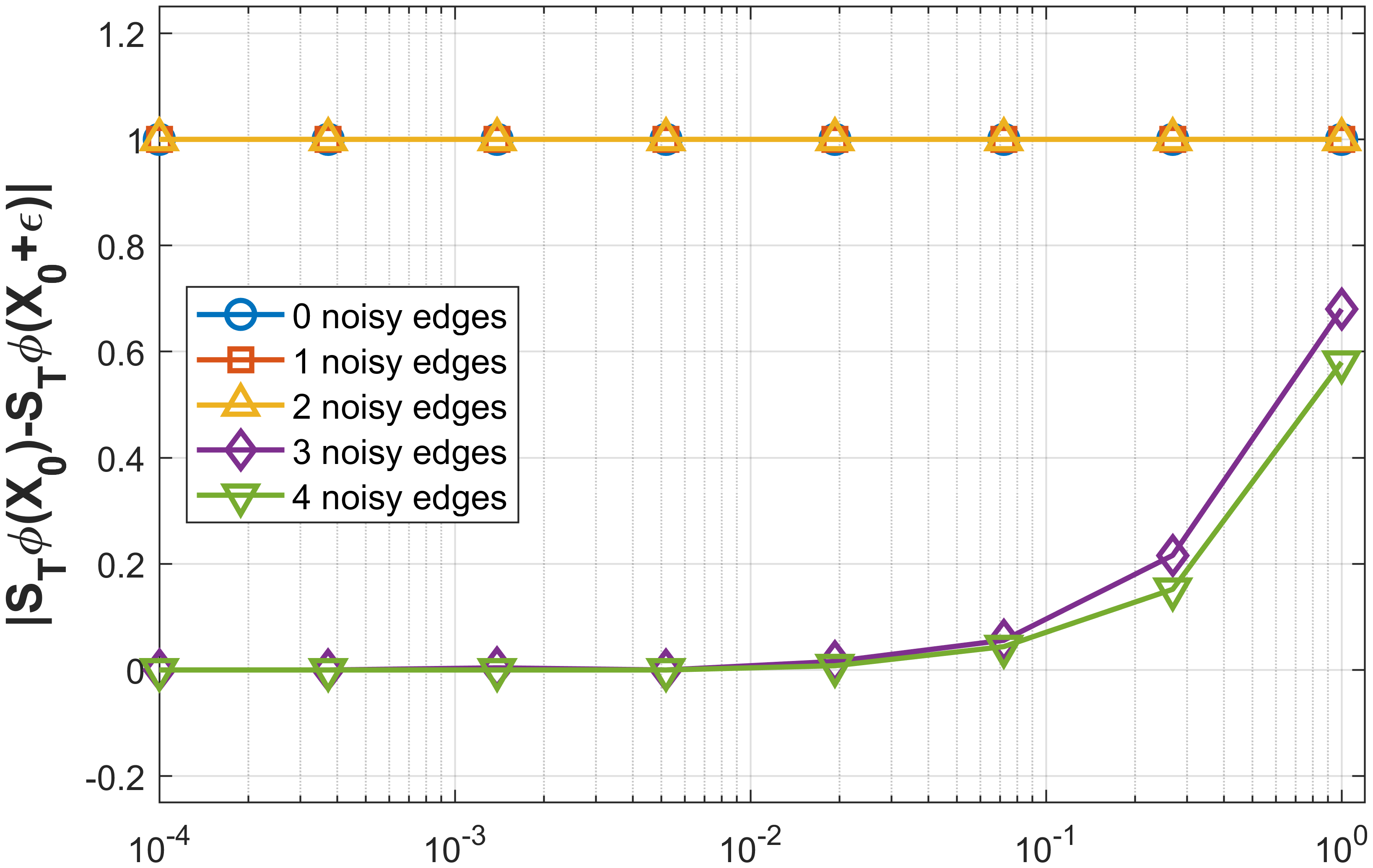}
	}
	\caption{Strong Feller verification SPDE \eqref{SEE} with  $b_j(x)=x-x^3$.}
	\label{SF_poly}
\end{figure}
\begin{figure}[htbp]
	\centering
	{
		\includegraphics[width=0.75\textwidth]{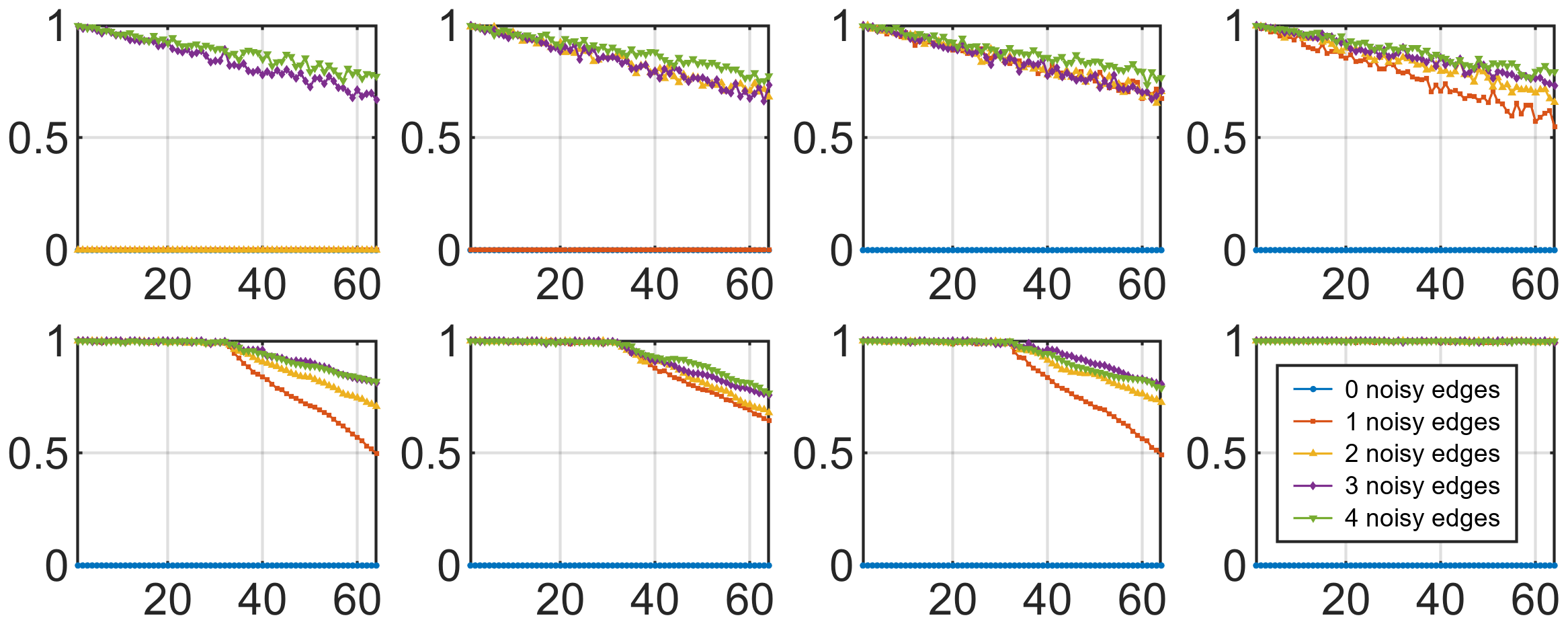}
	}
	\caption{Irreducibility verification SPDE \eqref{SEE} with  $b_j(x)=x-x^3$ (top for chain graph and bottom for star graph).}
	\label{IR_poly}
\end{figure}
\begin{figure}
    \centering
    {
    \includegraphics[width=0.7\linewidth]{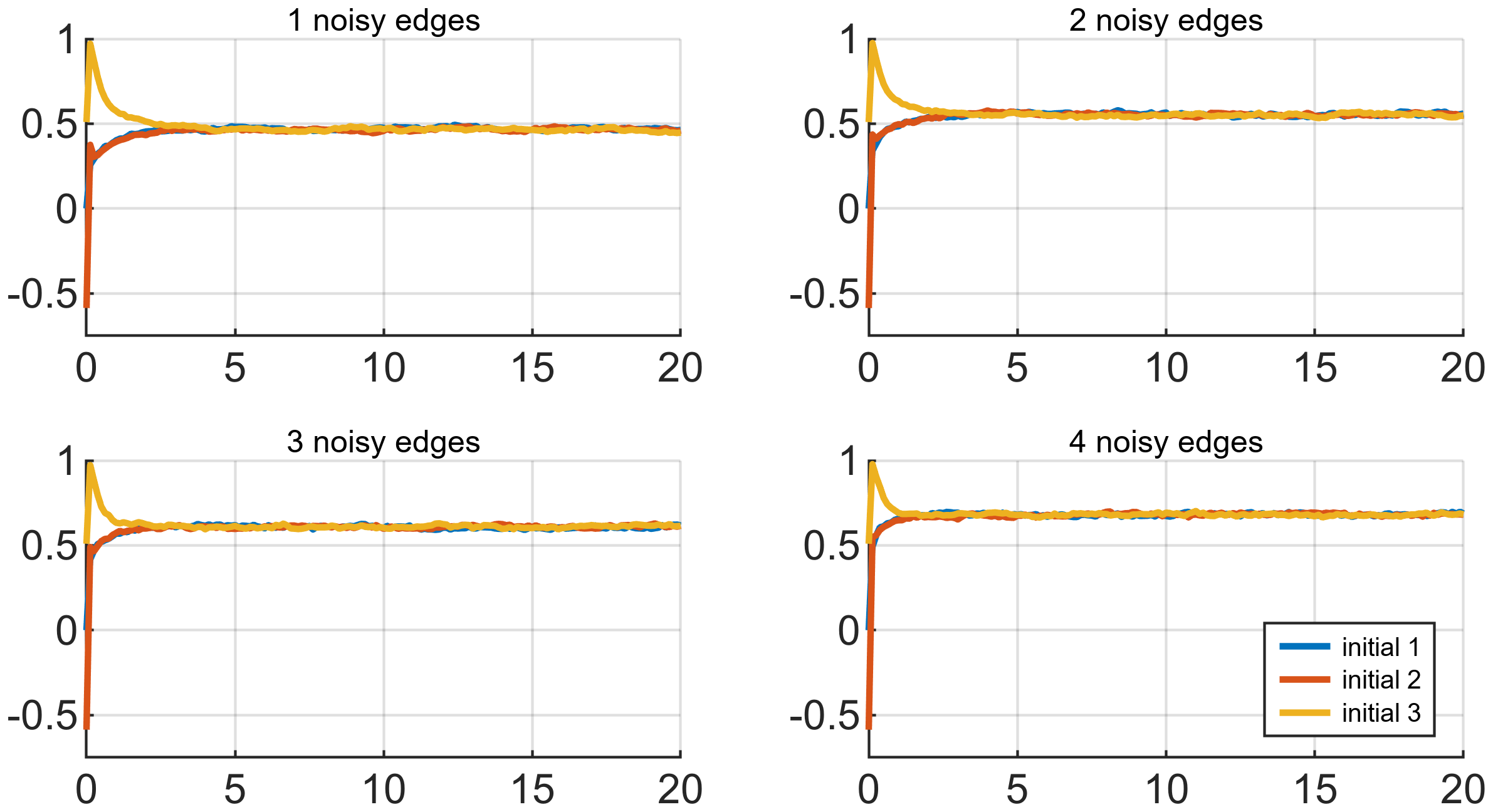}
    }
    \caption{Exponential ergodicity verification for SPDE \eqref{SEE} with  $b_j(x)=x-x^3$ on chain graph with 4 edges. }
    \label{IM_chain_poly}
    \end{figure}
\begin{figure}
    \centering
    {
    \includegraphics[width=0.7\linewidth]{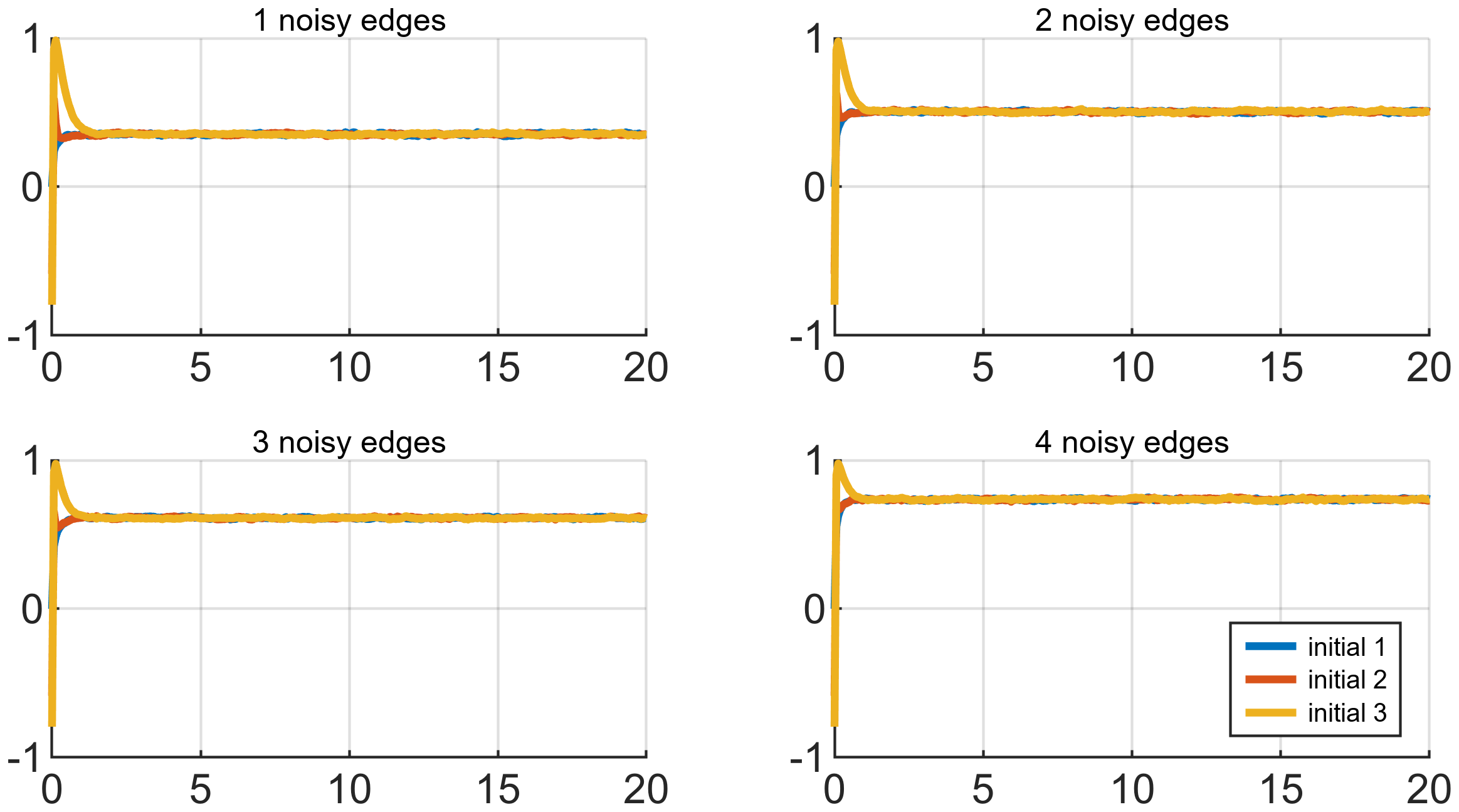}
    }
    \caption{Exponential ergodicity verification for SPDE \eqref{SEE} with  $b_j(x)=x-x^3$ on star graph with 4 edges. }
    \label{IM_star_poly}
    \end{figure}
\subsection{Proofs of \eqref{spatial regularity}  and \eqref{attractive}.}\label{proof of imth}
This section gives the proofs of estimates \eqref{spatial regularity} and \eqref{attractive}. To prove uniform time regularity estimates of the mild solution, we present an estimate for the stochastic convolution.
\begin{lemma}\label{stochasticconvolution}
	Denote by  $W_A(t):=\int_{0}^{t}P_{t-s}QdW(s)$ the stochastic convolution. For any $\alpha\in[0,\frac{1}{4})$, 
	\begin{flalign*}
		\sup_{t>0}\mathbb{E}\left[\|W_A(t)\|_{\mathbb{H}^{2\alpha}}^2\right]<+\infty.
	\end{flalign*}
	\begin{proof}
		Since $\|Q\|_{\mathcal{L}(H)} = 1$, for any $t > 0$ we have
		\begin{align*}
			\mathbb{E} \left[\| W_A(t) \|_{\mathbb{H}^{2\alpha}}^2\right]
			&= \mathbb{E} \Bigl[ \Bigl\| A^\alpha \int_0^t P_{t-s} Q \, dW(s) \Bigr\|_H^2 \Bigr]
			= \int_0^t \| A^\alpha P_{t-s} Q \|_{L_2^0(H)}^2 \, ds \\
			&\le \int_0^t \| A^\alpha P_{t-s} \|_{L_2^0(H)}^2 \, \| Q \|_{\mathcal{L}(H)}^2 \, ds= \int_0^t \| A^\alpha P_{t-s} \|_{L_2^0(H)}^2 \, ds\\
			&= \int_0^t \sum_{k=1}^{\infty} \mu_k^{2\alpha} e^{-2\mu_k s} \, ds= \sum_{k=1}^{\infty} \mu_k^{2\alpha} \frac{1 - e^{-2\mu_k t}}{2\mu_k}= \frac12 \sum_{k=1}^{\infty} \mu_k^{2\alpha-1} \bigl( 1 - e^{-2\mu_k t} \bigr).
		\end{align*}
		By $\mu_k \sim k^2$, the series converges when $\alpha \in [0,\tfrac14)$, which finishes the proof.
	\end{proof}
\end{lemma}
\begin{prop}\label{im}
	Under Assumption \ref{b1} and the condition in Theorem \ref{invariantmeasureth}, the mild solution $X(t,X_0)$ of \eqref{SEE} satisfies \eqref{spatial regularity}  and \eqref{attractive}.
\end{prop}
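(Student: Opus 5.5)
The plan is to prove \eqref{attractive} first, by a pathwise energy argument that exploits the additivity of the noise. Then I would prove \eqref{spatial regularity} by splitting the mild solution \eqref{mildsolution} into a deterministic part and the stochastic convolution $W_A$, and applying Lemma \ref{stochasticconvolution} to the latter.

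\emph{Step 1 (contraction, \eqref{attractive}).} Set $Y(t):=X(t,X_0)-X(t,X_1)$. The noise term $\int_0^tP_{t-s}QdW(s)$ cancels, so $Y$ solves the random PDE $\tfrac{d}{dt}Y=-AY+B(X(t,X_0))-B(X(t,X_1))$ with $Y(0)=X_0-X_1$. Since $B$ is a Nemytskii operator with $|b_j'|\le K$, one has $\langle B(h)-B(g),h-g\rangle\le K\|h-g\|_H^2$. The spectral bound $\langle Ah,h\rangle\ge \mu_1\|h\|_H^2$ holds on $(\ker A)^\perp$. Combining the two gives, formally,
\begin{align*}
\tfrac{d}{dt}\|Y\|_H^2\le -2(\mu_1-K)\|Y\|_H^2 .
\end{align*}
This would be justified rigorously by Galerkin/Yosida approximation, or via the strong form for smooth data followed by density. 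Gronwall then yields $\|Y(t)\|_H\le e^{-(\mu_1-K)t}\|X_0-X_1\|_H$ pathwise, hence \eqref{attractive} with $c=\mu_1-K$.

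\emph{Main obstacle.} The main difficulty is the zero mode $\phi^{1,0}\equiv m^{-1/2}$, on which $A$ gives no dissipation. The inequality above only holds if the nonlinearity controls this direction. Concretely, I would use the condition of Theorem \ref{invariantmeasureth} in the dissipative form $\langle -A(h-g)+B(h)-B(g),h-g\rangle\le -(\mu_1-K)\|h-g\|_H^2$. Alternatively, I would work on the invariant affine subspaces orthogonal to $\ker A$, checking that the mean $\langle X,\phi^{1,0}\rangle$ is handled separately. If $b_j$ is not monotone in the mean direction, I would first try to show that the mean component is driven only through $B$ and can be absorbed into the $K$-term.

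\emph{Step 2 (uniform moment bound).} Write $Z:=X-W_A$, which solves $\tfrac{d}{dt}Z=-AZ+B(Z+W_A)$. Use $\|B(Z+W_A)-B(Z)\|_H\le K\|W_A\|_H$ and $\|B(0)\|_H\le C$. The same energy inequality as in Step 1, together with Young's inequality with a small parameter, gives
\begin{align*}
\tfrac{d}{dt}\|Z\|_H^2\le -(\mu_1-K)\|Z\|_H^2+C\bigl(1+\|W_A\|_H^2\bigr).
\end{align*}
Taking expectations and applying Lemma \ref{stochasticconvolution} with $\alpha=0$ then yields $\sup_{t\ge0}\mathbb{E}\|X(t,X_0)\|_H^2\le C(1+\|X_0\|_H^2)$.

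\emph{Step 3 (regularity, \eqref{spatial regularity}).} For $t\ge1$ and $\alpha\in(0,\tfrac14)$, I would use the mild form restarted at time $t-1$:
\begin{align*}
X(t)=P_1X(t-1)+\int_{t-1}^tP_{t-s}B(X(s))ds+\int_{t-1}^{t}P_{t-s}QdW(s).
\end{align*}
The three terms are handled as follows. The smoothing estimate $\|A^{\alpha}P_s\|_{\mathcal L(H)}\le Cs^{-\alpha}$ bounds the first term by $C\,\mathbb{E}\|X(t-1)\|_H$. For the second term, the linear growth of $B$ together with $\int_0^1 s^{-\alpha}ds<\infty$ gives the bound $C\sup_s\mathbb{E}(1+\|X(s)\|_H)$. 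The third term has the same law as $W_A(1)$, so it is controlled by Lemma \ref{stochasticconvolution}. Combining these with Step 2 gives \eqref{spatial regularity} with $C$ independent of $t$.
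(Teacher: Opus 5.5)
Your outline follows the paper's proof almost step for step: an energy estimate for the difference, the splitting $Z=X-W_A$ with Young and Gronwall, then the mild formula with $\|A^\alpha P_s\|_{\mathcal L(H)}\le Cs^{-\alpha}$. Restarting at $t-1$ instead of using $\int_0^\infty\|A^\alpha P_s\|_{\mathcal L(H)}ds<\infty$ is a harmless variant. However, the proposal does not prove the statement, and the obstacle you flag is not a technicality. The inequality $\tfrac{d}{dt}\|Y\|_H^2\le-2(\mu_1-K)\|Y\|_H^2$ needs $\langle Ah,h\rangle\ge\mu_1\|h\|_H^2$ on all of $H$, and this fails on $\ker A=\mathrm{span}\{\phi^{1,0}\}$. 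None of your three remedies is available under the hypothesis $K<\mu_1$:
\begin{itemize}
\item the ``dissipative form'' is an extra assumption, not a reformulation of that hypothesis;
\item the slices $\{\langle h,\phi^{1,0}\rangle=c\}$ are not invariant, because $\langle B(X),\phi^{1,0}\rangle\neq0$ in general, and $\langle Q\,dW,\phi^{1,0}\rangle=m^{-1/2}\sum_{j\in\mathcal Y}d\beta_j$ with $\beta_j:=\langle W_j,1\rangle_{L^2(0,1)}$ is a nondegenerate Brownian motion as soon as $\mathcal Y\neq\varnothing$;
\item the mean cannot be absorbed into the $K$-term (see below).
\end{itemize}
Step 2 has the same defect, unflagged. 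Lemma \ref{stochasticconvolution} with $\alpha=0$ controls only the modes $k\ge1$ (its proof sums over $k\ge1$). But $\langle W_A(t),\phi^{1,0}\rangle=m^{-1/2}\sum_{j\in\mathcal Y}\beta_j(t)$ has variance $t|\mathcal Y|/m$. Hence $\sup_t\mathbb E\|W_A(t)\|_H^2=\infty$, and your Gronwall bound grows linearly in $t$.

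No argument can close this gap, because the proposition is false under Assumption \ref{b1} and $K<\mu_1$ alone.
\begin{itemize}
\item Take $b_j(x)=Kx$ with $0<K<\mu_1$. Then $X(t,X_0)-X(t,X_1)=e^{Kt}P_t(X_0-X_1)=e^{Kt}\phi^{1,0}$ when $X_0-X_1=\phi^{1,0}$, contradicting \eqref{attractive}. Moreover, if $\mathcal Y\neq\varnothing$, $\langle X(t),\phi^{1,0}\rangle$ is an unstable Ornstein--Uhlenbeck process, so $\mathbb E\|X(t,X_0)\|_H^2\to\infty$.
\item For $m\ge2$, take $b_1(x)=Kx$ and $b_j\equiv0$ for $j\ge2$. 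The operator $-A+K\mathbf 1_{e_1}$ has top eigenvalue $\lambda\ge K/m>0$ (Rayleigh quotient at $\phi^{1,0}$), with a non-constant eigenfunction $\psi$. Jensen's inequality gives $\mathbb E\|X(t,\psi)\|_{\mathbb H^{2\alpha}}\ge e^{\lambda t}\|\psi\|_{\mathbb H^{2\alpha}}\to\infty$, contradicting \eqref{spatial regularity}.
\end{itemize}
The paper's own proof contains exactly the step you were worried about. It writes $\langle-Au,u\rangle=\sum_{k\ge1}-\mu_ku_k^2\le-\mu_1\|u\|_H^2$, silently dropping $u_0=\langle u,\phi^{1,0}\rangle$, and it bounds $\sup_t\mathbb E\|W_A(t)\|_H^2$ by the same lemma.

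To get a true statement you need two changes. First, add a hypothesis that controls the mean, e.g. your inequality $\langle-A(h-g)+B(h)-B(g),h-g\rangle\le-c\|h-g\|_H^2$ for all $h,g\in H$ (implied by $(b_j(x)-b_j(y))(x-y)\le-c|x-y|^2$). Second, in Step 2 subtract a fully damped Ornstein--Uhlenbeck process instead of $W_A$, e.g. the solution of $d\tilde W=-(A+I)\tilde W\,dt+Q\,dW$, for which $\sup_t\mathbb E\|\tilde W(t)\|_H^2<\infty$. With these two changes your Steps 1--3 go through.
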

\begin{proof}
	We first show that  $\sup_{t>0}\mathbb{E}\left[\|X(t,X_0)\|_H^2\right]\le C(X_0,\mu_1,m,K)$.
	Define $Z(t)=X(t,X_0)-W_A(t)$, which satisfies
	\begin{flalign*}
		\begin{cases}
			dZ(t)=-AZ(t)dt+B(Z(t)+W_A(t))dt,\\
			Z(0)=X_0.
		\end{cases}
	\end{flalign*}
	Then
	\begin{flalign*}
		\frac{1}{2}\frac{d\|Z(t)\|_H^2}{dt}&=\langle -A Z(t)+B(Z(t)+W_A(t)),Z(t)\rangle\\
		&=\langle -AZ(t),Z(t)\rangle+\langle B(Z(t)),Z(t)\rangle+\langle B(Z(t)+W_A(t))-B(Z(t)),Z(t)\rangle\\
		&\le -\mu_1\|Z(t)\|_H^2+K\left(\|Z(t)\|_H^2+\sqrt{m}\|Z(t)\|_H\right)+K\|W_A(t)\|_H\|Z(t)\|_H\\
		&\le -\mu_1\|Z(t)\|_H^2+K\left(\|Z(t)\|_H^2+\sqrt{m}\|Z(t)\|_H\right)+\tfrac{\epsilon}{2}\|Z(t)\|_H^2+C(\epsilon,K)\|W_A(t)\|_H^2\\
		&\le -\mu_1\|Z(t)\|_H^2+K\|Z(t)\|_H^2+\epsilon \|Z(t)\|_H^2+ C(\epsilon,m,K)+C(\epsilon,K)\|W_A(t)\|_H^2\, a.s.,
	\end{flalign*}
	where we used $
	\langle -A u,u\rangle=\sum_{k=1}^{+\infty}-\mu_ku_k^2\le -\mu_1\|u\|_H^2
	$, 
	\begin{flalign*}
		\langle B(Z(t)),Z(t)\rangle&=\sum_{i=1}^{m}\langle b_i((Z(t))_i),(Z(t))_i\rangle\le\sum_{i=1}^{m}\|b_i((Z(t))_i)\|_{L^2(0,1)}\|(Z(t))_i\|_{L^2(0,1)}\\
		&\le K\sum_{i=1}^{m}\left(\|(Z(t))_i\|_{L^2(0,1)}+\|(Z(t))_i\|_{L^2(0,1)}^2\right)\\
		&\le K\Bigl(\|Z(t)\|_H^2+\sqrt{m}\bigl(\sum_{i=1}^{m}\|(Z(t))_i\|_{L^2(0,1)}^2\bigl)^\frac12\Bigl)= K\left(\|Z(t)\|_H^2+\sqrt{m}\|Z(t)\|_H\right),
	\end{flalign*}
	and Young's inequality.
	By virtue of
	\begin{flalign*}
		\left(e^{2(\mu_1-K-\epsilon)t}\|Z(t)\|_H^2\right)'&=e^{2(\mu_1-K-\epsilon)t}\left(\frac{d\|Z(t)\|_H^2}{dt}+2(\mu_1-K-\epsilon)\|Z(t)\|^2_H\right)\\
		&\le 2\bigl(C(\epsilon,m,K)+C(\epsilon,K)\|W_A(t)\|_H^2\bigr)e^{2(\mu_1-K-\epsilon)t},
	\end{flalign*}
	we have
	\begin{flalign*}		
		\|Z(t)\|_H^2&\le e^{-2(\mu_1-K-\epsilon)t}\|Z(0)\|_H^2+2\bigl(C(\epsilon,m,K)+C(\epsilon,K)\|W_A(t)\|_H^2\bigr)\int_0^te^{-2(\mu_1-K-\epsilon)(t-s)}ds\\
		&=e^{-2(\mu_1-K-\epsilon)t}\|X_0\|_H^2+\frac{\bigl(C(\epsilon,m,K)+C(\epsilon,K)\|W_A(t)\|_H^2\bigr)}{(\mu_1-K-\epsilon)}(1-e^{-2(\mu_1-K-\epsilon)t}).
	\end{flalign*}
	By taking $\epsilon=\frac{\mu_1-K}{2}$ we derive
	$
	\sup_{t>0}\mathbb{E}\bigl[\|X(t,X_0)\|_H^2\bigr]\le  C(X_0,\mu_1,m,K).
	$
	
	Next we prove $\sup_{t\geq1}\mathbb{E}\left[\| X(t,X_0)\|_{\mathbb{H}^{2\alpha}}\right]\leq C$ for $\alpha\in(0,\frac{1}{4})$. By the relation of $\mathcal{D}(A^\frac{\alpha}{2})$ and $\mathbb{H}^\alpha$, 	\begin{flalign*}
		\mathbb{E}\left[\|X(t,X_0)\|_{\mathbb{H}^{2\alpha}}\right]&=\mathbb{E}\left[\Big\|A^\alpha\Bigl(P_tX_0+\int_0^tP_{t-s}B(X(s,X_0))ds+W_A(t)\Bigl)\Big\|_H\right]\\
		&\le\mathbb{E}\left[\left\|A^\alpha P_tX_0\right\|_H+\Big\|A^\alpha\int_0^tP_{t-s}B(X(s,X_0))ds\Big\|_H+\|A^\alpha W_A(t)\|_H\right].
	\end{flalign*}
	For the first term,
	\begin{flalign}\label{firstterm}
		\left\|A^\alpha P_tX_0\right\|_H^2&=\sum_{k=1}^{+\infty}\mu_k^{2\alpha}e^{-2\mu_kt}u_k^2\le \sup_{x>0}x^{2\alpha}e^{-2xt}\sum_{k=1}^{+\infty}u_k^2\le C(\alpha)t^{-2\alpha}\|X_0\|_H^2.
	\end{flalign}
	For the second term, we point out that for any $h\in H$,
	\begin{flalign*}
		\|B(h)\|_H&=\Bigl(\sum_{i=1}^m\|b_i(h_i)\|_{L^2(0,1)}^2\Bigl)^\frac{1}{2}\le\sum_{i=1}^m\|b_i(h_i)\|_{L^2(0,1)}\\
		&\le  K\sum_{i=1}^{m} \left(1+\|h_i\|_{L^2(0,1)}\right)\le  K\left(m+\sqrt{m}\|h\|_H\right)=\sqrt{m}K(\sqrt{m}+\|h\|_H).
	\end{flalign*}
	Since $t>1$, 
	\begin{flalign*}
		&\quad\mathbb{E}\left[\Big\|A^\alpha\int_0^tP_{t-s}B(X(s,X_0))ds\Big\|_H\right]\le \mathbb{E}\left[\int_0^t\left\|A^\alpha P_{t-s}B(X(s,X_0))\right\|_Hds\right]\\
		&\le\mathbb{E}\left[\int_0^t\left\|A^\alpha P_{t-s}\right\|_{\mathcal{L}(H)}\left\|B(X(s,X_0))\right\|_Hds\right]\le \sqrt{m}K \mathbb{E}\left[\int_0^t\left\|A^\alpha P_{t-s}\right\|_{\mathcal{L}(H)}\left(\sqrt{m}+\left\|X(s,X_0)\right\|_H\right)ds\right]\\
		&\le \sqrt{m}K\bigl(\sqrt{m}+\sup_{s>0}\mathbb{E}\bigl[\|X(s,X_0)\|_H\bigr]\bigr)\int_0^t\|A^\alpha P_{t-s}\|_{\mathcal{L}(H)}ds\\
		&\le \sqrt{m}K\bigl(\sqrt{m}+\sup_{s>0}\mathbb{E}\bigl[\|X(s,X_0)\|_H\bigr]\bigr) \int_0^\infty\|A^\alpha P_{s}\|_{\mathcal{L}(H)}ds.
	\end{flalign*}
Notice that
\begin{flalign*}
	\|A^\alpha P_{s}\|_{\mathcal{L}(H)}=\sup_{k\geq1}\mu_k^\alpha e^{-\mu_ks}=\sup_{k\geq1}(\mu_ks)^\alpha s^{-\alpha}e^{-\tfrac{\mu_ks}{2}}e^{-\tfrac{\mu_ks}{2}}\le C(\alpha)s^{-\alpha}e^{-\tfrac{\mu_1s}{2}},
\end{flalign*}
	since $\sup_{x>0}x^\alpha e^{-\tfrac{x}{2}}=(2\alpha)^\alpha e^{-\alpha}$ (attained at $x=2\alpha$). Hence
	\begin{flalign*}
	\int_0^\infty\|A^\alpha P_{s}\|_{\mathcal{L}(H)}ds&\le C(\alpha)\int_0^\infty s^{-\alpha}e^{-\tfrac{\mu_1s}{2}}ds\\
	&\le\int_0^1s^{-\alpha}ds+\int_1^\infty e^{-\tfrac{\mu_1s}{2}}ds\le C(\alpha,\mu_1).
	\end{flalign*}
So
 \begin{flalign}\label{secondterm}
		\sup_{t>0}\mathbb{E}\left[\Big\|A^\alpha\int_0^tP_{t-s}B(X(s,X_0))ds\Big\|_H\right]
		\le C(\alpha,X_0,\mu_1,m,K).
	\end{flalign}
	For the last term, from Lemma \ref{stochasticconvolution},
	\begin{flalign}\label{lastterm}
		\sup_{t>0}\mathbb{E}\bigl[\|A^\alpha W_A(t)\|_H\bigr]\le\sup_{t>0}\left(\mathbb{E}\bigl[\|A^\alpha W_A(t)\|_H^2\bigr]\right)^\frac{1}{2}<\infty.
	\end{flalign}
	Combining \eqref{firstterm}, \eqref{secondterm} and \eqref{lastterm}, we finish the proof of \eqref{spatial regularity}.
	
	We now prove \eqref{attractive}. Define $E(t)=X(t,X_0)-X(t,X_1)$. Then $E(t)$ satisfies the equation
	\begin{flalign*}
		dE(t)&=-AE(t)\, dt+\bigl(B(X(t,X_0))-B(X(t,X_1))\bigl)dt.
	\end{flalign*}
	Then we have
	\begin{align*}
		\frac{1}{2}\frac{d}{dt}\|E(t)\|_H^2&=\langle E(t),-AE(t)\rangle+\langle E(t),B(X(t,X_0))-B(X(t,X_1))\rangle\\&\le -\mu_1\|E(t)\|_H^2+\|E(t)\|_H\|B(X(t,X_0))-B(X(t,X_1))\|_H\le -(\mu_1-K)\|E(t)\|_H^2.
	\end{align*}
Consequently, applying the Gronwall inequality and taking the expectation finish the proof.
\end{proof}
\end{document}